\documentclass[10pt]{amsart}

\usepackage{tikz}
\usepackage[all,cmtip]{xy}
\usetikzlibrary{calc}

\usepackage{ae} % or {zefonts}
\usepackage[T1]{fontenc} 
\usepackage[cp1250]{inputenc}
\usepackage{amsmath}
\usepackage{amssymb, amsfonts,amscd,verbatim}

\usepackage{indentfirst}
\usepackage{latexsym}
\input xy
\xyoption{all}

\usepackage{amsmath}    % need for subequations

%Declaration section
\theoremstyle{plain}
\newtheorem{Prop}{Proposition}[section]
\newtheorem{Thm}[Prop]{Theorem}
\newtheorem{Cor}[Prop]{Corollary}

\newtheorem{Lem}[Prop]{Lemma}

\theoremstyle{definition}
\newtheorem{Def}[Prop]{Definition}

\theoremstyle{remark}
\newtheorem{Rem}[Prop]{Remark}

\newcommand{\Int}{\operatorname{Int}\nolimits}

\newcommand{\diam}{\operatorname{Diam}\nolimits}

%Complexes
\newcommand{\Rips}{\operatorname{Rips}\nolimits}
\newcommand{\cRips}{\operatorname{\overline{Rips}}\nolimits}
\newcommand{\C}{\operatorname{Cech}\nolimits}
\newcommand{\cC}{\operatorname{\overline{Cech}}\nolimits}

\def\CC{{\mathbb C}}
\def\RR{{\mathbb R}}

\def\ZZ{{\mathbb Z}}
\def\NN{{\mathbb N}}

\def\cB{{\overline {B}}}
\def\cN{{\overline {N}}}

\def\di{{\partial}}
\def\eps{{\varepsilon}}
\def\s{\sigma}
\def\DC{\xrightarrow{DC}}
\def\SDC{\xrightarrow{SDC}}

\errorcontextlines=0
\numberwithin{equation}{section}
%
%\newcommand{\cc}[1]{\!\!\!<\!\!#1\!\!>}

%%%%%%%%%Title%%%%%%%%%%%%%%%%%%%%%%%%%%
\title[Footprints of geodesics in persistent homology]
{Footprints of geodesics in persistent homology}

%%%%%%%%%Document%%%%%%%%%%%%%%%%%%%%%%%%%%%%

\author{\v Ziga ~Virk}
\address{University of Ljubljana}
\email{ziga.virk@fri.uni-lj.si}
\thanks{Research was  supported by Slovenian Research Agency grants No. N1-0114 and P1-0292. }

\begin{document}

\maketitle
\begin{center}
\today
\end{center}

\begin{abstract}
Given a metric space $X$ and a  subspace $A\subset X$, we prove $A$ can generate various algebraic elements in persistent homology of $X$. We call such elements (algebraic) footprints of $A$. Our results imply that footprints typically appear in dimensions above $\dim(A)$. Higher-dimensional persistent homology thus encodes lower-dimensional geometric features of $X$.

We pay special attention to a specific type of geodesics  in a geodesic surface $X$ called geodesic circles.  We explain how they may generate non-trivial odd-dimensional and two-dimensional footprints. In particular, we can detect even some contractible  geodesics using two- and three-dimensional persistent homology.
This provides a link between persistent homology and the length spectrum in Riemannian geometry.
\end{abstract}

MSC[2020]: 55N31, 57R19, 55U05, 57N16, 57N65.

Keywords: Persistent homology; geodesic; persistence of a circle.

%---------------------------------------------------------------------------
%%%%%%%%%%%%%%%%%%%%%%%%%%%%%
\section{Introduction}
\label{SectIntro}

Algebraic topology is based on the notions of homotopy and homology groups. These invariants have  proved invaluable in most parts of mathematics. They were designed to detect holes of certain dimension, either through maps from spheres or via (co)chains. By construction, the $k$-dimensional homology $H_k$ of an $n$-dimensional complex is trivial for all $k>n$. The same was assumed to be true for homotopy groups, until the famous discovery \cite{Hopf} of the Hopf map, which demonstrated that $\pi_3(S^2)$ is non-trivial. This result completely transformed the perception of the homotopy groups, had profound consequences in mathematics, and launched an extensive research effort to understand the underlying machinery. 

At the turn of the century persistent homology appeared in the setting of computational topology, motivated in part by problems of data analysis of geometric shapes. The idea was to obtain a theoretical and computational framework, which would detect and measure sizes of holes. The resulting persistent homology has since had a profound impact on the development of theory and on applications in other sciences. At the heart of this approach is a filtration of simplicial complexes, often arising from \v Cech or Rips filtrations via increasing scale parameters. As decades before in algebraic topology, the intuition is that features of persistent homology in dimension $k$ correspond to $k$-dimensional holes in the shape. This intuition is well justified by the Nerve Theorem if our underlying space lies in the Euclidean space and we use the Euclidean distance to build the \v Cech filtration. However, it is known \cite{CS} that in the setting of Rips complexes such expectation is overly optimistic. Furthermore, the ambient Euclidean metric is usually chosen for reasons of computational simplicity. In many cases it might be more natural to consider a geodesic metric on the underlying space, which is the basic structure of Riemannian geometry, or perhaps some other statistically motivated distances \cite{EW} (for example the relative entropy, which is not even a metric; or the Fisher metric, which is in fact a geodesic metric).

In this paper we will focus on persistent homology via \v Cech and Rips filtrations of geodesic spaces. Our starting point is a surprising deep Theorem \ref{ThmAA}, recently proved in \cite{AA}. The theorem states that the mentioned open filtrations of geodesic $S^1$ first attain the homotopy type of $S^1$, then of $S^3$, $S^5$, etc. The theorem thus plays a role of the Hopf map, demonstrating that $1$-dimensional space $S^1$ generates non-trivial footprints in persistent higher-dimensional homology. Consequently it carries the same lesson for the perception of persistent homology as the original Hopf map had for the algebraic topology: non-trivial elements  in persistent homology in dimension $k$ may be generated by features of dimension less than $k$.

In this paper we intend to make use of  this Hopf effect to extract new geometric information about the underlying geodesic space $X$ from persistent homology:
\begin{itemize}
 \item First we introduce a general framework for footprint detection (Theorems \ref{ThmMain1} and \ref{ThmMain15}), i.e., a result stating that for suitable subspaces $A\subset X$, a part of the persistent homology of $A$ is contained in the persistent homology of $X$.
 \item  We then prove Theorem \ref{ThmMain2}, stating that for a loop $A$ on a surface $X$, a footprint of $A$ may sometimes contain a $2$-dimensional interval not contained in the persistent homology of $A$. 
\end{itemize}
In particular we show how certain loops (appropriately isometrically embedded geodesic circles of positive circumference, regular polygons and some ellipses) can be detected using persistent homology, even if they are contractible. 
%Furthermore, these loops can be extracted from their footprint (Remark \ref{RemLoc}). 
For a graphical example of these results see Figure \ref{FigEssence}. An application of these results is demonstrated in Section \ref{SectSample}.

Our eventual far-reaching goal, which is probably unattainable in its full generality, is understanding the correspondence between geometric features of a spaces and algebraic elements of persistent homology. We would like to know which geometric features generate footprints in persistent homology and how to recreate them. This paper establishes such a correspondence for nicely embedded geodesic circles. There are now three different types of footprints that might be induced by a geodesic circle on a geodesic surface:
\begin{itemize}
 \item A $1$-dimensional \textbf{topological footprint} appearing  if the loop is a member of a lexicographically shortest homology basis \cite{ZV}.
 \item A $3$-dimensional (and higher-dimensional) \textbf{combinatorial footprint} arising from the internal combinatorics of the Rips complex of a circle as described by Theorem \ref{ThmMain1}.
 \item A $2$-dimensional \textbf{geometric footprint} appearing under certain local geometric conditions at the loop in the absence of a topological footprint as described by Theorem \ref{ThmMain2}.
\end{itemize}

Further footprint detection results could be obtained if we could compute the persistent homology of simple spaces, such as spheres, etc.

The results of this paper have further theoretical and practical consequences,  some of which we plan to explore in the future work:
\begin{enumerate}
%\item The results of this paper disprove the Hausmann conjecture from 1995, \cite[Problem 3.12]{Haus}. The conjecture states that the connectivity of $\Rips(X,r)$ is an increasing function for geodesic $X$. A counterexample to the conjecure is $S$, which is  a portion of the unit sphere below the upper parallel of length $1/4$ (see Section \ref{SectSample}): for small $r$, $\Rips(S,r)$ is contractible by \cite{Lat} as $S$ is. For intermediate $r$, the homology of $\Rips(S,r)$ becomes non-trivial, as explained in Section \ref{SectSample}.
\item By results of \cite{ZV}, the critical values of the persistent fundamental group $\{\pi_1(\Rips(X,\bullet),r)\}_{r>0}$, which are in general incomputable due to the word problem in groups,  correspond to geodesic circles. Results of this paper indicate that candidates for these critical values can in some cases be extracted from higher dimensional homology. 
\item The collection of lengths of closed geodesics features prominently in differential geometry under the name of the length spectrum (for a modern treatment see \cite[Section 7.2]{LengthSpec} or \cite{LS2}). It is closely related to the Laplacian spectrum and in some cases, even to the volume of the manifold. The results of this paper detect a part of this spectrum arising from geodesic circles. By extending our method we hope establish a result describing how much of the length spectrum is encoded in the persistent homology. 
\item As was already mentioned in \cite{ZV}, the setting of geodesic spaces provides a convenient venue for topological data analysis via persistent homology for a number of reasons: filtrations are smaller, they seem to be more stable \cite{ZV1} and seem to contain less noise (see example of Section \ref{SectSample}), there seems to be an inherent structure to the corresponding persistence diagram, etc.
\end{enumerate}

% !TEX root = main.tex
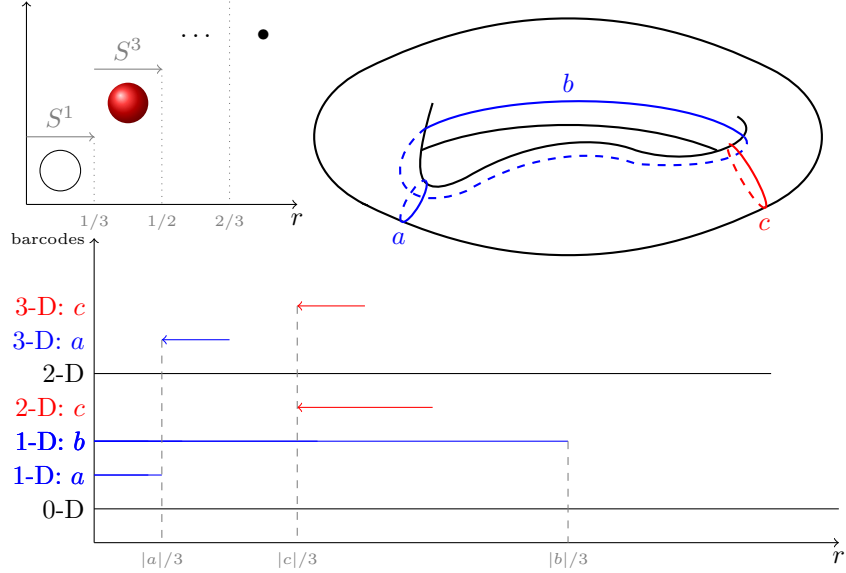
\begin{figure}
\begin{tikzpicture}[scale=.9]
%torus
%outer
\draw [thick] (-3,0) ..controls (-1,-1)and(1,-1)..
(3,0)..controls(4,.5)and(4, 1.5)..
(3,2)..controls(1,3)and(-1, 3)..
(-3,2)..controls(-4,1.5)and(-4, .5)..
cycle;
%inner bottom
\draw [thick](-2,1.5) ..controls (-2.3,.5)and(-2.3, 0)..
(-1.5, .4)..controls(-.5,1) and(.5,1)..
(1,.8)..controls(2, 0.5)and(3, 1)..(2.5,1.3);
%inner top
\draw  [thick](-2.17,.8) ..controls (-1,1.3)and(1,1.3)..(2.2,.8);
%blue gen1
\draw [blue, thick, rotate=-30](-2,-1.45) arc (-90:90:.1 and .35);
\draw [blue, thick, dashed, rotate=-30](-2,-1.45) arc (-90:-270:.1 and .35);
\draw [blue](-2.5, -.5) node {$a$};
%blue gen2
\draw [blue, thick](2.6,1) arc (20:150:2.6 and .8);
\draw [thick, blue, dashed](-2.1,1.13) ..controls (-2.8,.6)and(-2.5, -.2)..
(-1.5, 0.2)..controls(-.5,.8) and(.5,.8)..
(1,.6)..controls(2.3, 0.5)and(2.8, .8)..(2.6,1.0);
\draw [blue](0, 1.8) node {$b$};
%red gen1
\draw [red, thick, rotate=30](2.5,-1.5) arc (-90:90:.1 and .54);
\draw [red, thick, dashed, rotate=30](2.5,-1.5) arc (-90:-270:.1 and .55);
\draw [red](2.9, -.3) node {$c$};
%%%%%%%%%%%%%
%AA result
%left top axes
\draw[->] (-8, 0) to (-4, 0) node[below]{$r$};
\draw[->] (-8, 0) to (-8, 3) node{};
%left diagram
\draw[->] [gray] (-8,1) to node [above]{$S^1$}(-7, 1);
\draw[dotted, gray]  (-7,1) to (-7, -0) node[below]{\tiny{$1/3$}};
\draw[->] [gray] (-7,2) to node [above]{$S^3$}(-6, 2);
\draw[dotted, gray]  (-6,2) to (-6, 0) node[below]{\tiny{$1/2$}};
\draw (-5.5, 2.5) node {$\ldots$};
\draw[dotted, gray]  (-5,3) to (-5, 0) node[below]{\tiny{$2/3$}};
\draw (-4.5, 2.5) node {$\bullet$};
\draw  (-7.5,.5) circle [radius=0.3];
\shade [ball color=red]  (-6.5,1.5) circle (0.3);
%%%%%%%%%%%%%
%bottom axes
\draw [->] (-7, -5) to (4, -5) node[below] {$r$};
\draw [->] (-7, -5) to (-7, -.5) node[ left] {\tiny barcodes};
%bars
% 0D
\draw (-7, -4.5) node[left]{$0$-D} to (4, -4.5);
%1D
\draw [blue] (-7, -4)node[left]{ $1$-D: \color{blue} $a$}  to (-6, -4);

\draw [ blue] (-7, -4)node[left]{ $1$-D: \color{blue} $a$} to (-6.2, -4);

\draw [blue] (-7, -3.5)node[left]{ $1$-D: \color{blue} $b$} to (0, -3.5);

\draw [blue] (-7, -3.5)node[left]{ $1$-D: \color{blue} $b$} to (-3.7, -3.5);
\draw [blue] (-7, -3.5)node[left]{ $1$-D: \color{blue} $b$} to (-5.7, -3.5);
\draw [blue] (-7, -3.5)node[left]{ $1$-D: \color{blue} $b$} to (-6.2, -3.5);

%2D

\draw (-7, -3)node[left]{\color{red} $2$-D:  $c$};
\draw [<-,red] (-4, -3) to (-2, -3);
%\draw[dashed, gray](-2, -3) to (-2, -5) node[below]{\tiny$x_1$};

\draw  (-7, -2.5)node[left]{ $2$-D} to (3, -2.5) node [right] {};
%3D

\draw  [blue](-7, -2)node[left]{ $3$-D: \color{blue} $a$};
\draw  [<-,blue](-6, -2) to (-5, -2) node [right] {};
%\shade [ball color=blue, opacity=.3]  (-2.31,0.05) circle (0.34);

%OLD
%\draw (-7, -1.5)node[left]{\color{blue} $3$-D:  $b$};
%\draw [<-, blue] (0, -1.5) to (2, -1.5) node [right] {};
%\draw (-7, -1)node[left]{\color{red} $3$-D:  $c$};
%\draw [<-, red] (-4, -1)to (-3, -1) node [right] {};

%NEW
\draw (-7, -1.5)node[left]{\color{red} $3$-D:  $c$};
\draw [<-, red] (-4, -1.5)to (-3, -1.5) node [right] {};
%\shade [ball color=red, opacity=.5]  (2.63,.4) circle (0.51);

%verticals

\draw[dashed, gray](-6, -2) to (-6, -5) node[below]{\tiny$|a|/3$};

%OLD
%\draw[dashed, gray](-4, -1) to (-4, -5) node[below]{\tiny$|c|/3$};
%\draw[dashed, gray](0, -1.5) to (0, -5) node[below]{\tiny$|b|/3$};

%NEW
\draw[dashed, gray](-4, -1.5) to (-4, -5) node[below]{\tiny$|c|/3$};
\draw[dashed, gray](0, -3.5) to (0, -5) node[below]{\tiny$|b|/3$};

%%%%%%%%%%%%%
\end{tikzpicture}
\caption{An example of a footprint detection. The upper left side represents the homotopy type of the Vietoris-Rips filtration of a circle (i.e., odd-dimensional spheres) equipped with a geodesic metric by Theorem \ref{ThmAA}. The black shape on the right is a two-dimensional torus and below it is an excerpt from its barcode as retrieved from our results. Geodesic circle $a$ is a member of the shortest homology base and  hence generates odd-dimensional spheres and bars by Theorem \ref{ThmMain1}. Geodesic circle $c$ is not a member of such a base, hence it generates higher odd-dimensional bars along with a $2$-dimensional bar
%, with $x_{1}$ being at most the homotopy height between $a$ and $c$ 
by Theorem \ref{ThmMain2}.}
\label{FigEssence}
\end{figure}

\textbf{Related work}: 
One of the first appearances of filtrations goes back to the introduction of \v Cech (co)homology. The approach later evolved into the Shape Theory \cite{DySe78}, which studies the limiting behaviour as $r \to 0$. In the analogous setting, such approximations with $r \to \infty$ have been employed in asymptotic topology \cite{Dra02} and \cite{Comb}. Reconstructing the homotopy type of a manifold for small $r$ was considered in \cite{Haus}. Similar reconstructions in the geodesic setting were considered in \cite{Lat} and are in general a subject of study in computational topology \cite{W, Att}. The study of filtrations for all values of $r$ in the geodesic setting started with the case of $S^{1}$ in \cite{AA}. The $1$-dimensional persistence was first considered in \cite{7A} for metric graphs, and completely developed in \cite{ZV, ZV1}. Further results that can be used in conjunction with our footprint detection procedure, contain ellipses \cite{AAS} and regular polygons \cite{A3}. A connection between persistent homology and some geometric notions in simplicial complexes has recently been treated in \cite{ACos}. A connection to the filling radius has been established in \cite{Memoli}.

\
 
The structure of the paper is the following. In Section \ref{SectPre} we provide preliminaries. Section \ref{SectDefCon} introduces deformation contractions as our  tool. Sections \ref{SectFirs} and \ref{SectSec} provide a  footprint detection framework in a specific and general setting respectively. Section \ref{SectNull} describes the combinatorics of nullhomologies of loops. Section \ref{Sect2D} is the most technical section, describing an emergence of a two-dimensional footprint. Section \ref{SectCech} extends the results to \v Cech complexes and closed filtrations. Section \ref{SectSample} provides an example of the interpretation using the results of this paper.

%---------------------------------------------------------------------------
%%%%%%%%%%%%%%%%%%%%%%%%%%%%%
\section{Preliminaries}
\label{SectPre}

Let $(X,d)$ be a metric space and $r>0$. For $x\in X$ let  $B(x,r)$ and $\cB(x,r)$ denote the open and closed balls centred at $x$ of radius $r$. For $A\subset  X$, notation $N_X(A,r)$ (or $N(A,r)$, when it is clear what the ambient space is) represents the open neighborhood, and $\cN(A,r)$ represents the closed neighborhood around $A$ of radius $r$ in $X$. 

A space $X$ is \textbf{geodesic}, if for each distinct $ x,y \in X$ there exists an isometric embedding $g\colon [0,d(x,y)] \to X$ with $g(0)=x$ and $g(d(x,y))=y$, i.e., if $x$ and $y$ are connected by a path of length $|g|=d(x,y)$, which is called a \textbf{geodesic} (in the literature, the notion of a geodesic sometimes refers to what we would call a local geodesic, which differs from our notion of geodesic). A \textbf{geodesic surface} is a surface equipped with a  geodesic metric. A subset $A$ of a geodesic space $X$ is \textbf{geodesically convex}, if for each $x,y\in A$, every geodesic between $x$ and $y$ in $X$ is contained in $A$. A \textbf{geodesic circle} in $X$ is an isometrically embedded circle $(S, d_S) \hookrightarrow (X,d_X)$, where $d_S$ is a geodesic metric. If $G$ is an Abelian group and $\alpha$ is a loop in $X$ then $[\alpha]_G \in H_1(X,G)$ is the homology element represented by $\alpha$.

We next define the height of  homotopy, which is motivated by the combinatorial version of the height in \cite{CL}. 
%The author was  notified that a version of this concept appeared already in \cite{Fro} in the form of $y-$L-homotopies.
Given a homotopy $H \colon S^1 \times I\to X$ between loops $H_{S^1\times \{0\}}$ and $H_{S^1\times \{1\}}$, its \textbf{height} is the length of the longest intermediate curve $H_{S^1\times \{t\}}$. A \textbf{homotopy height} between homotopic loops is the infimum of the heights of all homotopies between the loops. Similarly, a nullhomotopy height of a contractible loop is  the infimum of the heights of all nullhomotopies.

Given  $r>0$ we define various simplicial complexes with the vertex set $
X$. For a longer discussion on the subject see \cite{ZV2}. The condition next to the name determines when a finite subset $\s\subset X$ belongs to the complex.
\begin{enumerate}
 \item \textbf{(Open) Rips} (or Vietoris-Rips) \textbf{complex} $\Rips(X,r): Diam(\s) < r$.
 \item \textbf{Closed Rips complex} $\cRips(X,r): Diam(\s) \leq r$.
 \item \textbf{(Open) \v Cech complex} $\C(X,r): \cap_{z\in\s} B(z,r) \neq \emptyset$.
  \item \textbf{Closed \v Cech complex} $\cC(X,r): \cap_{z\in\s} \cB(z,r) \neq \emptyset$.
\end{enumerate}

When considering \v Cech complexes of subsets $A\subset X$ we need to specify where we look for an intersection. If the context is not clear, then $\C_X(A,r)$ consists of finite subsets $\sigma \subset A$ for which $\cap_{z\in\s} B_X(z,r) \neq \emptyset$ in $X$; similarly, $\C_A(A,r)$ consists of finite subsets $\sigma \subset A$ for which $\cap_{z\in\s} B_A(z,r) \neq \emptyset$ in $A$. The same goes for the closed \v Cech complexes. Note that if $\alpha$ is a geodesic circle in $X$, then $\C_X(\alpha,r)=\C_\alpha(\alpha, r)$ for all $r>0$.

We will refer to  $1$-dimensional simplices as edges and to $2$-dimensional simplices as triangles. For a simplex $\sigma$ in a \v Cech complex we refer to any point $w$ of $\cap_{z\in\s} B(z,r)$ as a witness of $\sigma$, or say that $w$ witnesses $\sigma$.

Given any complex $\mathcal{C}$ mentioned above, we construct a corresponding \textbf{filtration} $\{\mathcal{C}(X,r)\}_{r>0}$ as a collection of complexes $\mathcal{C}(X,r)$ for all positive parameters, bound together by the bonding inclusions $i_{p,q}\colon \mathcal{C}(X,p)\hookrightarrow \mathcal{C}(X,q)$, which are identities on the vertices for all $p<q$. Two filtrations $\{A_r\}_{r>0}$ and $\{B_r\}_{r>0}$ are \textbf{homotopy equivalent}, if there exists a homotopy equivalence between them, i.e., a collection of homotopy equivalences $\{f_r\colon A_r \to B_r\}_{r>0}$, commuting with the bonding maps up to homotopy.

The following is the main result of 
\cite{AA}, which will be crucial in our arguments. It describes the homotopy types of filtrations of a circle equipped with a geodesic metric.

\begin{Thm}\cite{AA}
\label{ThmAA}
Suppose $S$ is a circle, equipped with a geodesic metric so that it is of circumference $1$. Then for all $l=0,1,2,\ldots$
$$
\Rips(S, r) \simeq S^{2l + 1}, \qquad \textit{ for } \frac{l}{2l+1}<r \leq\frac{l+1}{2l+3},
$$
$$
\C(S, r) \simeq S^{2l + 1}, \qquad \textit{ for } \frac{l}{2(l+1)}<r \leq \frac{l+1}{2(l+2)},
$$
$$
\cRips(S,r)\simeq 
\begin{cases}
   S^{2l + 1}, \qquad \textit{ for } \frac{l}{2l+1}<r<\frac{l+1}{2l+3},\\
 \bigvee^{|\RR|} S^{2l}, \qquad \textit{ for } r=\frac{l}{2l+1},
\end{cases}
$$
$$
\cC(S,r)\simeq 
\begin{cases}
   S^{2l + 1}, \qquad \textit{ for } \frac{l}{2(l+1)}<r<\frac{l+1}{2(l+2)},\\
 \bigvee^{|\RR|} S^{2l}, \qquad \textit{ for } r=\frac{l}{2(l+1)}.
\end{cases}
$$
Furthermore, the bonding maps on filtrations are homotopy equivalences whenever possible.

For $r\geq 1/2$, all the mentioned complexes are contractible. 
\end{Thm}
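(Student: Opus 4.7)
The plan is to exploit the cyclic symmetry of $S$ and reduce the analysis to that of finite equally spaced point sets, where the Rips complex is a well-studied combinatorial object related to cyclic polytopes. Parametrize $S$ as $\RR/\ZZ$ so that $d(x,y) = \min(|x-y|, 1-|x-y|)$. First I would develop a purely combinatorial characterization of when a cyclically ordered finite subset $\sigma = \{x_0, x_1, \ldots, x_k\} \subset S$ is a simplex of $\Rips(S,r)$ (resp.\ $\C(S,r)$), using the pattern of arc lengths between cyclically consecutive points of $\sigma$. The case $r \geq 1/2$ is immediate: every pair of points has distance at most $1/2$, so the complex is the full simplex on $S$ and hence contractible.

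Next I would reduce from the uncountable vertex set $S$ to finite approximations $V_n \subset S$ consisting of $n$ equally spaced points. For $V_n$ with mesh sufficiently fine relative to the slack in the diameter inequality, the ``snap to nearest vertex'' map extends to a simplicial deformation retraction $\Rips(S,r) \to \Rips(V_n, r)$; concretely, move each point along the circle at uniform speed to its nearest vertex of $V_n$, and verify that each intermediate configuration continues to satisfy the required diameter (resp.\ witness) bound. Taking $V_n$ uniformly fine over compact sub-ranges of $r$, one concludes that $\Rips(S,r)$ and $\Rips(V_n,r)$ have the same homotopy type for $n$ large, and that the bonding maps of the two filtrations commute up to homotopy.

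The combinatorics of $\Rips(V_n, r)$ for equally spaced points on the circle is classical: when $r$ lies in the interval $\bigl(l/(2l+1),\, (l+1)/(2l+3)\bigr]$, the resulting complex is (for $n$ sufficiently large) combinatorially isomorphic to the boundary of the cyclic polytope $C(n, 2l+2)$, which is a simplicial $(2l+1)$-sphere by Gale's evenness condition. This yields the homotopy type $S^{2l+1}$. Within each open interval the bonding maps are simplicial inclusions between copies of the same cyclic polytope boundary and are hence homotopy equivalences. The \v{C}ech case runs in parallel once one rephrases the witness condition as an arc length bound: a set has a common witness iff it lies in an open arc of length at most $2r$, which accounts for the denominator shift $2l+1 \mapsto 2(l+1)$ in the \v{C}ech thresholds.

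The main obstacle is the behavior at the critical closed parameters $r = l/(2l+1)$ for $\cRips$ (and $r = l/(2(l+1))$ for $\cC$). At such $r$ an uncountable family of new maximal ``borderline'' simplices appears simultaneously -- one for each cyclic rotation of a regular $(2l+1)$-configuration -- each realized as a $2l$-cell attached along a $(2l-1)$-sphere. To identify the homotopy type as the wedge $\bigvee^{|\RR|} S^{2l}$ I would use a cofibration and pushout argument: verify that the stable odd-dimensional structure from below the critical level retracts onto the boundary spheres of the new cells, then show that the resulting quotient collapses to a wedge of even spheres indexed by the continuum of rotation parameters. Controlling this uncountable attachment without pathology is the delicate point, since one cannot invoke a direct CW limit argument; instead one must work with the rotation action of $S$ on the filtration and exhibit explicit mapping cone structures compatible with that action.
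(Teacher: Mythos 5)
The paper does not prove this theorem; it is cited verbatim from the reference \cite{AA} (Adamaszek--Adams, \emph{The Vietoris-Rips complexes of a circle}), so there is no internal proof to compare against. Judged on its own, your sketch has two concrete gaps and takes a route that diverges from the cited paper's.

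First, the ``snap to nearest vertex'' deformation retraction $\Rips(S,r) \to \Rips(V_n,r)$ does not obviously preserve simplices. If two points $x,y$ of a simplex lie just on opposite sides of a midpoint between consecutive vertices of $V_n$, then during the uniform-speed deformation they move in opposite directions and the distance $d(x,y)$ can \emph{increase} before settling; for $r$ close to a critical value the slack is arbitrarily small, so the diameter bound can be violated at an intermediate time. Hausmann-style crushings need the distance to be nonincreasing in $t$ for all pairs, and your map does not satisfy that. A reduction to finite models can be made to work, but it requires a more careful argument (for instance, retracting one ``gap arc'' at a time, or the dense-subset inclusion result in \cite{AA}, Proposition~3.5), not a simultaneous snap.

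Second, the assertion that $\Rips(V_n,r)$ is combinatorially isomorphic to the boundary of the cyclic polytope $C(n,2l+2)$ is not correct. The Rips complex of $n$ equally spaced circle points with adjacency to the $k$ nearest neighbors on each side has far more faces than a cyclic-polytope boundary; Adamaszek's finite result (homotopy equivalence to $S^{2l+1}$ or a wedge of $S^{2l}$'s) is obtained by an inductive collapse and a ``winding fraction'' invariant for cyclic graphs, not by identifying the complex with a polytope boundary. In \cite{AA} the circle case is then handled by extending this cyclic-graph/winding-fraction machinery to the uncountable vertex set directly, rather than by a limit of discretizations. Your overall plan (reduce to a finite combinatorial model, then identify critical transitions) is in the right spirit and mirrors how the finite theory is used, but the two key steps as written would not go through, and the genuinely technical part --- the uncountable wedge at critical closed parameters --- is deferred rather than argued.
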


For a loop (or a path) $\alpha\colon I \to X$ and $r>0$, an $r$-\textbf{sample} of $\alpha$ is a sequence $\alpha(t_0), \alpha(t_1), \alpha(t_2), \ldots, \alpha(t_k)$, where $t_0=0< t_1 < \ldots < t_k=1$ and for each $i$ $\diam(\alpha|_{[t_i, t_{i+1}]}) < r$ holds. See \cite[Section 3]{ZV} for details and properties of $r$-samples. We will often identify an $r$-loop with the simplicial cycle in $\Rips(X,r)$ consisting of edges $[t_i, t_{i+1}]$. We will often utilise a transition from the continuous setting of $X$ to the discrete setting of Rips and \v Cech complexes as introduced in \cite{ZV} via $r$-samples of loops.

By \textbf{persistence} we mean any object, obtained by applying any  homology group functor to any filtration. For example, a $H_1(\_, \ZZ)$ persistence via the closed \v Cech filtration of $X$ is a collection $\{H_1(\cC(X,r),\ZZ)\}_{r>0}$ along with the induced bonding maps. In the paper we will sometimes consider restrictions of parameter $r$. We will often be using relations (such as isomorphisms) and operations (such as direct sums) on such persistences: such operations will always consist of level-wise maps, which are  consistent (i.e., commutative) with the bonding maps. Where there is no ambiguity about the coefficients, such as in the proofs, we will omit them from the notation of homology. For an Abelian group $G$, the maps induced by the bonding maps $i_{p,q}$ on homology with coefficients in $G$ are denoted by $i^{G}_{p,q}$.
Given $\eps>0$, an $\eps$-\textbf{interleaving} between two filtrations $\{A_r\}_{r>0}$ and $\{B_r\}_{r>0}$ consists of collections of maps $\{f_r\colon A_r \to B_{r+\eps}\}_{r>0}$ and $\{g_r\colon B_r \to A_{r+\eps}\}_{r>0}$ that commute with the bonding maps. 

For $b \leq d$ the notation $\langle b,d \rangle$ represents an interval. We use this notation when we do not want to commit to a specific type of endpoints of an interval. In particular, $\langle b,d \rangle$ can be either $(b,d)$ or $[b, d)$, etc.

For the rest of this section let us assume $G$ is a field.   Given an interval $\langle b,d \rangle \subseteq (0, \infty)$, $G_{\langle b,d \rangle}$, is a collection of vector spaces
$\{ U_r \}_{r>0}$ defined by
\begin{align}
  U_r  &=  \begin{cases} G, & r \in    \langle b,d \rangle \\
                         0,  & r \notin \langle b,d \rangle
           \end{cases}
\end{align}
and by setting all bonding maps $U_{r} \to U_{s}$, with $r, s \in \langle b,d\rangle$,
to be isomorphisms. These are called (elementary) \textbf{interval modules}, intervals, or just bars.  For compact spaces $X$, the $k$-dimensional $H_k(\_,G)$ persistence decomposes as a direct sum of such bars, which together constitute  a barcode. A decomposition also exists for any persistence obtained through open \v Cech or Rips filtration (see q-tameness condition in Proposition 5.1 of \cite{Cha2} and the property of being radical in \cite{ChaObs} for details). A \textbf{persistence diagram} PD is an alternative description of a barcode. It consists of a collection of points in a plane, one point corresponding to each such bar, with the coordinates of a point being the left endpoint (birth) and the right endpoint (death) of the corresponding bar. 

% 
%By results of \cite{ZV}, the barcode of $H_1(\_,G)$ via the open Rips or \v Cech filtration consists of intervals of the form $(0,d_i]$. In particular, the endpoints are always closed. 

Given a compact geodesic locally contractible space, there exists (see Definition 8.7 and Proposition 8.9 of \cite{ZV} for details) a \textbf{lexicographically minimal basis} of $H_1(X,G)$ consisting of a finite collection of geodesic circles $a_1, a_2, \ldots, a_k$ with $|a_1|\leq |a_2| \leq \ldots \leq |a_k|$, such that:
\begin{enumerate}
 \item for each $i$  homology class $[a_i]$ is not an element of the subgroup of $H_1(X,G)$ generated by $[a_1],[a_2], \ldots, [a_{i-1}]$, and
  \item homology classes $[a_1], [a_2],\ldots, [a_k]$ generate $H_1(X,G)$.
\end{enumerate}
 The lexicographical minimality refers to the fact that if $b_1, b_2, \ldots, b_m$ with $|b_1|\leq |b_2| \leq \ldots 
\leq |b_m|$ is another collection of loops in $X$ satisfying (1) and (2) above and with $[|b_1|, |b_2|, \ldots, |b_m|]$ being lexicographically smaller than  $([a_1],[a_2], \ldots, [a_{k}])$, then $m=k$ and $|b_i|= |a_i|, \forall i$.

\begin{Thm} \cite{ZV}
 \label{Thm1Dim}
 Given a compact geodesic locally contractible space, a field $G$, and a lexicographically minimal basis of $H_1(X,G)$ consisting of a finite collection of geodesic circles $a_1, a_2, \ldots, a_k$ with $|a_1|\leq |a_2| \leq \ldots \leq |a_k|$, persistence $\{H_1(\Rips(X,r),G)\}_{r>0}$ is isomorphic to the direct sum of interval modules $G_{( 0,|a_1|/3 ]}$, $G_{( 0,|a_2|/3 ]}$, \ldots, $G_{( 0,|a_k|/3 ]}$.
\end{Thm}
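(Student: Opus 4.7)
The plan is to construct an explicit persistence isomorphism $\Phi\colon \bigoplus_{i=1}^k G_{(0,|a_i|/3]} \to \{H_1(\Rips(X,r),G)\}_{r>0}$. Each inclusion $a_i \hookrightarrow X$ induces a simplicial inclusion of filtrations $\Rips(a_i,r)\hookrightarrow \Rips(X,r)$, and applying $H_1(\_,G)$ yields persistence morphisms $\iota_{i,*}\colon \{H_1(\Rips(a_i,r),G)\}_r \to \{H_1(\Rips(X,r),G)\}_r$. Rescaling Theorem \ref{ThmAA} from circumference $1$ to $|a_i|$ identifies the source with the interval module $G_{(0,|a_i|/3]}$, so $\Phi$ is simply the direct sum of the $\iota_{i,*}$. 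Proving the theorem then reduces to verifying $\Phi$ is an isomorphism at every scale.

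For small $r>0$, compactness and local contractibility of $X$ give, via a reconstruction theorem of Hausmann--Latschev type, an isomorphism $H_1(X,G)\cong H_1(\Rips(X,r),G)$. Under this identification, the $i$-th summand of $\Phi$ sends the generator of $G_{(0,|a_i|/3]}$ to $[a_i]\in H_1(X,G)$; since the $[a_i]$ form a basis of $H_1(X,G)$ by hypothesis, $\Phi_r$ is an isomorphism for all sufficiently small $r$.

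Surjectivity of $\Phi_r$ at every scale $r$ follows from the $r$-sample machinery of \cite{ZV}: each $1$-cycle in $\Rips(X,r)$ is homologous to the $r$-sample of some continuous loop $\gamma\subset X$, and expanding $[\gamma]_G=\sum_i \lambda_i [a_i]$ in the basis of $H_1(X,G)$ exhibits the cycle as the image under $\Phi_r$ of $\sum_i \lambda_i e_i$. Any contribution from an $a_i$ with $|a_i|<3r$ is automatically a boundary (in $\Rips(a_i,r)$ already, by Theorem \ref{ThmAA}), matching the vanishing of the $i$-th summand of the source at that scale. In particular, for $r$ large enough that all summands have died, $H_1(\Rips(X,r),G)=0$, which handles the tail.

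The delicate point is injectivity at intermediate scales $r_0$. Suppose a nonzero tuple $(\lambda_i)$, supported on indices with $|a_i|/3\geq r_0$, lies in $\ker \Phi_{r_0}$. Then the corresponding combination of $r_0$-samples of the $a_i$ bounds a $2$-chain in $\Rips(X,r_0)$. Translating this filling back through the $r$-sample correspondence of \cite{ZV}, one extracts a continuous loop in $X$ of length at most $3r_0$ whose class equals $\sum \lambda_i [a_i]$ in $H_1(X,G)$; replacing it with a shortest geodesic representative yields a geodesic circle of length $\leq 3r_0 \leq |a_j|$, where $j$ is the largest index with $\lambda_j\neq 0$, whose class lies outside $\langle [a_1],\ldots,[a_{j-1}]\rangle$. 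Substituting this circle for $a_j$ in the basis then contradicts lexicographic minimality (strict inequality is obtained by passing to $r_0+\epsilon$ and using that the kernel persists under bonding maps). The main obstacle will be making the translation from a combinatorial nullhomology in $\Rips(X,r_0)$ to a controlled continuous geometric relation in $X$ rigorous: one must carefully process each $2$-simplex of the filling and track length bounds via the triangle inequality, which is precisely the combinatorial/geometric machinery developed in \cite{ZV}.
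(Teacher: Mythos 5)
This theorem is cited from \cite{ZV} and the present paper gives no proof of it, so there is no in-paper proof to compare against; I can only assess your argument on its own terms. Your high-level plan (define $\Phi$ as the direct sum of inclusion-induced maps, identify the image at small $r$ with a basis of $H_1(X,G)$, prove surjectivity via $r$-samples, prove injectivity via lexicographic minimality) is plausible and is morally in line with how one would go about this. However, the injectivity step is not merely "delicate"; as written it asserts something that is too strong and whose correct version is the entire technical content of the theorem. You claim that a $2$-chain filling of $\sum_i\lambda_i L_{a_i}$ in $\Rips(X,r_0)$ lets you "extract a continuous loop in $X$ of length at most $3r_0$ whose class equals $\sum\lambda_i[a_i]$." This is not what a Rips filling gives you: processing the $2$-simplices of the filling by connecting their vertices with geodesics yields, at best, that the class $\sum\lambda_i[a_i]$ lies in the subgroup of $H_1(X,G)$ generated by classes of loops (geodesic triangles) of length $<3r_0$, not that it is representable by a \emph{single} short loop. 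The lexicographic-minimality contradiction can be run with this weaker (and correct) statement, but even proving it requires the filling-to-geodesic-triangles translation, a careful argument about how the induced subdivision of $L_\alpha$ compares with the original $r$-samples, and why the contribution of the triangles is captured by the homomorphism $\pi_1(X)\to H_1(\Rips(X,r_0))$. You explicitly flag this as the "main obstacle" and defer it to \cite{ZV}, which means the proof of the hard direction is not actually present.

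Two smaller points. First, the appeal to a "reconstruction theorem of Hausmann--Latschev type" for small $r$ is not justified for general compact geodesic locally contractible spaces: both Hausmann's and Latschev's theorems are for (closed) Riemannian manifolds, and the analogue for geodesic spaces (in degree $1$) is itself a nontrivial result of \cite{ZV}, not an off-the-shelf tool. Second, in the surjectivity step, passing from a singular homology relation $[\gamma]_G=\sum\lambda_i[a_i]$ in $H_1(X,G)$ to a homology relation between $r$-samples in $\Rips(X,r)$ requires subdividing the singular $2$-chain until its simplices have diameter $<r$ and then verifying that the boundary of the subdivided chain is the right combination of $r$-samples; this works but should be said, since it is exactly the same continuous-to-combinatorial bridge whose reverse direction you need (and lack) in the injectivity step.
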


%---------------------------------------------------------------------------
%%%%%%%%%%%%%%%%%%%%%%%%%%%%%
\section{Deformation contractions}
\label{SectDefCon}

Crushings were first defined in \cite{Haus} as a type of maps inducing homotopy equivalences on the corresponding Rips complexes. In this paper we will refer to them as deformation contractions and prove they also induce homotopy equivalences on the corresponding \v Cech complexes. In our context,  deformation contractions are the crucial tool connecting metric properties of a space to the homotopy properties of the corresponding Rips or \v Cech complexes.

\begin{Def}\cite{Haus}\label{DefDC}
Let $X$ be a metric space and $A\subset X$. A continuous map $F\colon X \times [0,1] \to X$ is called a \textbf{deformation contraction} (we will abbreviate it as DC and write $X \DC A$) if:
\begin{enumerate}
 \item $F(x,0)=x, F(x,1)\in A, F(a,t)=a, \forall x\in X, a\in A, t\in [0,1]$, and
 \item $d(F(x,t'),F(y,t')) \leq d(F(x,t),F(y,t)), \forall x,y\in X, t'>t$.
\end{enumerate}
If additionally $d(F(x,t'),F(y,t')) < d(F(x,t),F(y,t))$ holds for all pairs $  (x,y)\in (X\setminus A) \times X$ with $x \neq y$ and for all $ t'>t$, then $F$ is called a strict deformation contraction (SDC or $X \SDC A$).
\end{Def}

It is easy to see that if $X$ is geodesic and $X \SDC A$, then $A$ is geodesically convex in  $X$. Furthermore, if $X$ is geodesic and $X \DC A$, then $A$ equipped with the subspace metric is a geodesic space. If $N(A,r) \DC A$, where $A\subset X$ is a subspace, then $\C_A(A,r)=\C_X(A,r)$. We will  be using this last fact generously throughout the paper whenever \v Cech complexes will be involved.

Proposition \ref{PropDC} for Rips complexes was first proved in \cite{Haus}. Here we present an adaptation of that proof to the case of \v Cech complexes.

\begin{Prop}
 \label{PropDC}
 Suppose $X \DC A$. Then the inclusions $\Rips(A,r)\hookrightarrow \Rips(X,r)$ and $\C(A,r)\hookrightarrow \C(X,r)$ are homotopy equivalences for each $r>0$. 
\end{Prop}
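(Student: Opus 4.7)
The plan is to adapt Hausmann's argument for the Rips case (carried out in \cite{Haus}) by identifying the one place where \v Cech complexes behave differently, namely the verification that the deformation preserves the witness condition. For each $t \in [0,1]$ let $\phi_t\colon X \to X$ be given by $\phi_t(x) = F(x,t)$, so that $\phi_0 = \mathrm{id}_X$, $\phi_1(X) \subset A$, and $\phi_1|_A = \mathrm{id}_A$. I will show each $\phi_t$ extends to a simplicial self-map $\hat{\phi}_t$ of both $\Rips(X,r)$ and $\C(X,r)$, then construct a homotopy from $\mathrm{id}$ to $i \circ \hat{\phi}_1$ and observe that $\hat{\phi}_1 \circ i = \mathrm{id}$, making $\hat{\phi}_1$ a homotopy inverse of the inclusion $i$.

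The first step is to verify the simplicial extension. For Rips this is immediate from property (2) of Definition \ref{DefDC}: if $\diam(\sigma) < r$ then $\diam(\phi_t(\sigma)) \leq \diam(\sigma) < r$. For \v Cech, suppose $\sigma = \{x_0,\ldots,x_k\}$ lies in $\C(X,r)$ with witness $w$, meaning $d(w, x_i) < r$ for all $i$. Then $\phi_t(w) = F(w, t)$ witnesses $\phi_t(\sigma)$, since $d(F(w,t), F(x_i, t)) \leq d(F(w,0), F(x_i, 0)) = d(w, x_i) < r$ by the non-expanding property of $F$. Thus $\phi_t(\sigma) \in \C(X,r)$, and $\phi_t$ extends to the desired simplicial map $\hat{\phi}_t$.

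Next I would build the homotopy $H\colon |\C(X,r)| \times [0,1] \to |\C(X,r)|$ by $H(\sum \lambda_i x_i, t) = \sum \lambda_i F(x_i, t)$, and the analogous formula for Rips. The right-hand side is a well-defined point because the vertices $\{F(x_i, t)\}$ span a simplex by the first step; the expression is independent of the barycentric representation because adding vertices with coefficient $0$ does not change the sum. Continuity on each product $|\sigma| \times [0,1]$ follows from continuity of $F$ in $t$ together with affine dependence on the barycentric coordinates, and faces glue consistently, so $H$ is continuous in the weak topology. Evaluating at $t = 0$ recovers the identity, while $t = 1$ yields $i \circ \hat{\phi}_1$. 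Combined with $\hat{\phi}_1 \circ i = \mathrm{id}_{\C(A,r)}$, which follows from $\phi_1|_A = \mathrm{id}_A$, this shows that $i$ is a homotopy equivalence, and the parallel argument handles $\Rips$.

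I do not anticipate a serious obstacle: the entire adaptation hinges on the single observation that a \v Cech witness is transported along $F$ to a witness of the deformed simplex, which is a direct consequence of property (2) in Definition \ref{DefDC}. The rest of the argument merely parallels the bookkeeping of Hausmann's original Rips proof.
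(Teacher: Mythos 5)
Your simplicial-extension step is correct and does isolate the one genuinely new verification needed for the \v Cech case (a witness is transported by $F$ to a witness of the deformed simplex). However, the homotopy $H\bigl(\sum_i \lambda_i x_i,\, t\bigr) = \sum_i \lambda_i F(x_i,t)$ is \emph{not} continuous in the weak (CW) topology on $|\C(X,r)|$, and this is precisely the obstruction that Hausmann's argument is designed to sidestep — so the same gap would already invalidate your proposal in the Rips case, before \v Cech complexes even enter the picture. To see the failure, fix a vertex $x \in X\setminus A$ with $t\mapsto F(x,t)$ non-constant, and look at the path $t\mapsto H(x,t)$. Its image is the set of $0$-cells $\{F(x,t) : t\in[0,1]\}$, which meets uncountably many distinct cells of the complex. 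In the weak topology, the continuous image of a compact set must lie in a finite subcomplex, so this path cannot be continuous. Each time-slice $\hat\phi_t$ is simplicial and hence continuous on geometric realizations, but the vertices $F(x_i,t)$ hop among discrete $0$-cells as $t$ varies, and there is no continuity in the $t$-direction from which to assemble a homotopy on all of $|\C(X,r)|\times[0,1]$.

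The paper (following Hausmann) avoids this by never deforming the whole realization at once. One proves instead that every simplicial map $g\colon (K, K_0)\to (\C(X,r),\C(A,r))$ from a \emph{finite} pair is homotopic rel $K_0$ to one landing in $\C(A,r)$, and then invokes Whitehead. With only finitely many landmarks $L = g(K^{(0)})$ in play, one can pick $\eps>0$ smaller than the slack $r - \max_\sigma \max_{z\in\sigma} d(z,w_\sigma)$ and a time-subdivision $0 = 0/p < 1/p < \cdots < p/p = 1$ fine enough that each landmark moves by less than $\eps$ per step; then consecutive snapshots $f^k, f^{k+1}$ are \emph{contiguous} (for each $\sigma$ the union $f^k(\sigma)\cup f^{k+1}(\sigma)$ is witnessed by $F(w_\sigma, k/p)$), hence homotopic rel $K_0$. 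Contiguity in discrete time is what replaces the continuity-in-$t$ that your $H$ lacks. Your witness-transport observation is exactly the estimate that makes the contiguity bound go through in the \v Cech setting, so the correct proof is closer to your draft than it might appear — but the Whitehead-plus-contiguity machinery is not optional bookkeeping, it is the proof.
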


\begin{proof}
 As mentioned above, we will only prove the \v Cech case by adjusting Hausmann's proof.
 
We will prove the following claim: for each pair of finite simplicial complexes $K_0 \leq K$, each simplicial map 
$$
g\colon (K, K_0) \to (\C(X,r), \C(A,r))
$$
is homotopic (rel $K_0$) to a simplicial map $f\colon (K, K_0) \to (\C(X,r), \C(A,r))$ with $f(K)\subset \C(A,r)$. 
The claim implies that the induced maps on the homotopy groups are isomorphisms and the conclusion of the proposition follows from the Whitehead theorem as in \cite{Haus} (see \cite{Hat} for the necessary background in algebraic toplogy).

Define landmarks $L=g(K^{(0)})\subset X$ and for each simplex $\sigma \in K$ choose a witness $w_\sigma \in \cap_{z\in \sigma} B(z,r)\subset X$. Choose also some $\eps>0$ with   $\eps < r-\max_{\sigma\in K} \max_{z\in \sigma} d(z,w_\sigma)$. At last choose $p\in \NN$ so that for all $k\in \{0, 1, \ldots, p-1\}$ and for all $x\in L$, 
$$
d\Bigg(F\Bigg(x, \frac{k}{p}\Bigg), F\Bigg(x, \frac{k+1}{p}\Bigg)\Bigg)<\eps,
$$
where $F$ is the deformation contraction given in the hypotheses of the proposition.
For each $k\in \{0, 1, \ldots, p\}$, a rule $z\mapsto  F\big(g(z), \frac{k}{p}\big)$ mapping $K^{(0)} \to X$ induces a simplicial map $f^k \colon K \to \C(X,r)$. Note that for  each simplex $\sigma \in K$, simplex $f^{k}(\sigma)$ is witnessed by $F\big(w_\sigma, \frac{k}{p}\big)$ by the property of DC. Furthermore, each $f^k$ is constant on $K_0$. 

Choose $\sigma\in K$. Note that for each $k\in \{0, 1, \ldots, p-1\}$ and $z\in \sigma$ we have 
$$
d(f^{k+1}(z), F(w_\sigma, k/p)) \leq d(f^{k+1}(z), f^{k}(z)) + d(f^{k}(z), F(w_\sigma, k/p)) < \eps +(r-\eps) =r,
$$
hence $F(w_\sigma, p/k)$ witnesses a simplex in $\C(X,r)$ containing $f^k(\sigma)$ and $f^{k+1}(\sigma)$. Consequently, $f^k$ and $f^{k+1}$ are contiguous rel $K_0$, hence homotopic rel $K_0$. Inductively we conclude that $g=f^0$ and $f=f^p$ are homotopic rel $K_0$, which proves our claim since $f(K^{(0)})\subset A$.
\end{proof}

\begin{Cor}
 \label{CorDCEquiv}
 Suppose $X \DC A$. Then open Rips filtrations of $X$ and $A$ are homotopy equivalent, and open \v Cech filtrations of $X$ and $A$ are homotopy equivalent.
\end{Cor}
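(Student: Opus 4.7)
The plan is to take the family of inclusions $\{j_r^{\mathrm{Rips}}\colon \Rips(A,r)\hookrightarrow \Rips(X,r)\}_{r>0}$ and $\{j_r^{\C}\colon \C(A,r)\hookrightarrow \C(X,r)\}_{r>0}$ as the candidate homotopy equivalences between the two filtrations, and to check that they verify the definition of a homotopy equivalence of filtrations given in Section \ref{SectPre}.

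First, I would invoke Proposition \ref{PropDC} directly: since $X \DC A$, each $j_r^{\mathrm{Rips}}$ and each $j_r^{\C}$ is a homotopy equivalence of (topological) simplicial complexes. This supplies the required level-wise homotopy equivalences for every $r>0$.

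Next, I would verify that these families commute with the bonding maps, which are themselves inclusions. For any $0<p<q$ the bonding maps $i_{p,q}^A\colon \Rips(A,p)\hookrightarrow \Rips(A,q)$ and $i_{p,q}^X\colon \Rips(X,p)\hookrightarrow \Rips(X,q)$ are identities on vertices, and the same holds for $j_p^{\mathrm{Rips}}$ and $j_q^{\mathrm{Rips}}$. Hence the square
\[
\begin{array}{ccc}
\Rips(A,p) & \xrightarrow{i_{p,q}^A} & \Rips(A,q) \\
\big\downarrow j_p^{\mathrm{Rips}} & & \big\downarrow j_q^{\mathrm{Rips}} \\
\Rips(X,p) & \xrightarrow{i_{p,q}^X} & \Rips(X,q)
\end{array}
\]
commutes strictly (both compositions are the inclusion of $\Rips(A,p)$ into $\Rips(X,q)$ determined by the identity on vertices), and in particular commutes up to homotopy. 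The same argument applies verbatim to the \v Cech filtrations, using that the containment $A\subset X$ and the fact that witnesses of simplices in $\C(A,r)$ taken in $A$ also witness the same simplex in $\C(X,r)$ guarantee that $j_r^{\C}$ is well defined and that the analogous diagram commutes strictly.

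The proof is essentially a packaging step and there is no substantial obstacle; the only mild subtlety is to make sure that the $\Rips$ and $\C$ inclusions are indeed simplicial and well-defined, which follows trivially from the definitions since $\diam_A(\sigma)=\diam_X(\sigma)$ for $\sigma\subset A$ in the Rips case, and since any common point in $\bigcap_{z\in\sigma} B_A(z,r)$ lies in $\bigcap_{z\in\sigma} B_X(z,r)$ in the \v Cech case. Combining the level-wise result of Proposition \ref{PropDC} with the strict commutativity of the above squares yields the corollary.
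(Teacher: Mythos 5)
Your proposal is correct and takes essentially the same route as the paper: it uses the level-wise homotopy equivalences supplied by Proposition \ref{PropDC} and observes that, being inclusions which are identities on vertices, they commute strictly with the bonding inclusions of the filtrations. The paper's own proof is a one-line statement of exactly this observation.
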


\begin{proof}
 The inclusions used in Proposition \ref{PropDC} and its counterpart in \cite{Haus} obviously commute with the bonding inclusions of the filtrations.
\end{proof}

\begin{Rem}
\label{RemException}
 Proposition \ref{PropDC} does not hold for closed filtrations. Consider $X=[0,1] \times \{0,1\} \subset \RR^2$ in the Euclidean metric. It is easy to see that it deformation contracts to $A=\{0\}\times \{0,1\} \subset \RR^2$. However, $\cRips(X,1)$ has uncountable fundamental group, while $\cRips(A,1)$ is an edge.
 Proposition \ref{PropDC} does not hold for closed filtrations and SDC either, see Figure \ref{FigClosedEx}.
\end{Rem}

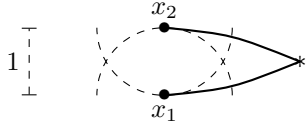
\begin{figure}
% !TEX root = main.tex

\begin{tikzpicture}[scale=.9]
%circles and scale
\draw[thin, dashed, |-|] (0,0)-- node[left]{$1$}(0,1);
\draw [thin, dashed] (3,0) arc (0:180:1);
\draw[thin, dashed]  (3,1) arc (0:-180:1);
\draw[thick] (2,0) node{$\bullet$}node[below]{$x_1$}..controls(3,.1)..(4, .5) node{$\ast$}..controls(3,.9)..(2,1)node{$\bullet$}node[above]{$x_2$};

%%%%%%%%%%%%%
\end{tikzpicture}
\caption{A demonstration that a SDC may not induce homotopy equivalence on closed filtrations. The underlying planar space $X$ (in bold) consists of two points (bullets $x_1$ and $x_2$) at distance $1$, and two corresponding arcs connecting bullets to a point $\ast$ far away. The intersection of $\cB(x_1,1)$ and the opposite arc is precisely $x_2$, and vice versa. This implies   that $[x_1,x_2]$ is a maximal simplex in $\cRips(X,1)$ and thus $\pi_1(\cRips(X,1),\ast)\neq 1$. However, if the $y$-coordinates of arcs are changing monotonically, then $X \SDC \ast$ and $\cRips(\ast,1)$ is just a point.}
\label{FigClosedEx}
\end{figure}

When considering deformation contraction of subspaces, we can use Proposition \ref{PropDC} to obtain induced maps on the Rips complexes. However, we do have to be a bit more careful when considering \v Cech complexes.

\begin{Prop}
\label{PropCDC}
 Suppose $A\subset B \subset X$, $r>0$, and $N_X(B, r) \DC A$. Then $\C_X(A,r)\simeq \C_X(B,r)$.
\end{Prop}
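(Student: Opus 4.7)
My plan is to apply Proposition \ref{PropDC} to the hypothesis after two preliminary identifications. Setting $N:=N_X(B,r)$, I would first show that $\C_X(B,r)=\C_N(B,r)$ (any common witness in $X$ of a finite subset of $B$ lies within $r$ of $B$, hence in $N$) and that $\C_X(A,r)=\C_A(A,r)$ (the distance non-increasing property of $F:N\times[0,1]\to N$ forces $F(N_X(A,r),t)\subseteq N_X(A,r)$, so $F$ restricts to a deformation contraction $N_X(A,r)\DC A$, and then the observation stated between Definition \ref{DefDC} and Proposition \ref{PropDC} applies).

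Given these identifications, Proposition \ref{PropDC} applied to $N\DC A$ shows that the inclusion $\C_A(A,r)\hookrightarrow \C_N(N,r)$ is a homotopy equivalence, with simplicial homotopy inverse $\rho:\C_N(N,r)\to \C_A(A,r)$ induced on vertices by $x\mapsto F(x,1)$. Since $A\subseteq B$, this inclusion factors through $\C_N(B,r)$; writing $\psi:\C_A(A,r)\hookrightarrow \C_N(B,r)$ and $j:\C_N(B,r)\hookrightarrow \C_N(N,r)$, the restriction $\phi:=\rho|_{\C_N(B,r)}$ is simplicial (any witness $w_\sigma\in N$ of a $B$-simplex $\sigma$ produces the witness $F(w_\sigma,1)\in A$ of the image, by distance non-increasingness) and satisfies $\phi\circ\psi=\mathrm{id}_{\C_A(A,r)}$.

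The remaining task is to verify $\psi\circ\phi\simeq \mathrm{id}_{\C_N(B,r)}$ inside $\C_N(B,r)$. The natural candidate is the contiguity homotopy from the proof of Proposition \ref{PropDC}, given by $f^k(b)=F(b,k/p)$ for $k=0,\ldots,p$: these maps are fixed on the vertices of $\C_A(A,r)$ (since $F$ fixes $A$), satisfy $f^0=\mathrm{id}$ and $f^p=\psi\circ\phi$, and are pairwise contiguous for $p$ chosen sufficiently large, since each simplex $f^k(\sigma)$ is witnessed by $F(w_\sigma,k/p)\in N$.

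The main obstacle is that the intermediate values $F(b,k/p)$ lie in $N$ but need not lie in $B$, so a priori this homotopy lands in $\C_N(N,r)$ rather than in $\C_N(B,r)$. The technical heart of the proof is to execute the contiguity argument so that the intermediate simplicial maps stay inside $\C_N(B,r)$: one uses the defining equality $N=N_X(B,r)$ to replace each intermediate vertex value by a sufficiently close point of $B$, and exploits the positive slack $\delta=r-\max_{\sigma,z}d(g(z),w_\sigma)>0$ afforded on a finite subcomplex by the strict inequality in the open \v Cech definition, so that the same candidate $F(w_\sigma,k/p)$ remains a valid witness and consecutive maps remain contiguous. This is where the hypothesis $N_X(B,r)\DC A$, rather than the weaker $N_X(A,r)\DC A$ that would already suffice for Step~1, is essential.
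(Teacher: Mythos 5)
Your Steps 1--4 are sound, and they line up with the paper's own (extremely terse) proof, which says only that ``the proof of Proposition~\ref{PropDC} applies as witnesses of simplices of $\C_X(B,r)$ are contained in $N_X(B,r)$.'' That is exactly your observations that $\C_X(B,r)=\C_N(B,r)$, that $\C_X(A,r)=\C_A(A,r)$, and that $F$ can be applied to the witnesses of $B$-simplices. You have also correctly located the point where Hausmann's contiguity argument, transplanted to this setting, requires more care: the intermediate maps $f^k$ induced by $z\mapsto F(g(z),k/p)$ have vertex values in $N=N_X(B,r)$, not in $B$, so the contiguity chain from $g$ to $\rho\circ g$ a priori takes place in $\C_N(N,r)$ rather than in $\C_N(B,r)$. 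This is genuinely the crux, and the paper's one-line proof does not visibly address it.

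The repair you sketch, however, does not work. You propose to replace each intermediate value $F(b,k/p)\in N$ by a ``sufficiently close'' point of $B$, absorbing the perturbation into the strict-inequality slack $\delta=r-\max_{\sigma,z}d(g(z),w_\sigma)>0$. But the defining equality $N=N_X(B,r)$ only guarantees that $F(b,k/p)$ is within distance $r$ of $B$; it gives you no point of $B$ within $\delta$ of $F(b,k/p)$, and $\delta$ is typically far smaller than $r$. So the ``sufficiently close'' replacements simply need not exist, and the modified chain again escapes $\C_N(B,r)$. Relatedly, the algebra you have in hand is not enough on its own: $\phi\psi=\mathrm{id}$ makes $\psi_*$ split-injective on $\pi_n$ and $H_n$, and the known equivalence $\C_A(A,r)\hookrightarrow\C_N(N,r)$ makes $(\iota_{BN})_*$ surjective, but $A\subset B\subset C$ with $A\hookrightarrow C$ an equivalence does not in general force $A\hookrightarrow B$ to be one. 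So what is missing is precisely the injectivity of $(\iota_{BN})_*$, i.e.\ that $\psi\phi\simeq\mathrm{id}_{\C_N(B,r)}$, and your argument does not close that step. A different device (not the open-\v Cech slack, which is at the wrong scale) is needed to keep the homotopy inside $\C_N(B,r)$.
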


\begin{proof}
 The proof of Proposition \ref{PropDC} applies as witnesses of simplices of $\C_X(B,r)$ are contained in $N_X(B, r)$.
\end{proof}

%---------------------------------------------------------------------------
%%%%%%%%%%%%%%%%%%%%%%%%%%%%%
\section{Basic example of footprint detection}
\label{SectFirs}

In this section we present the prototype of a footprint detection. We use deformation contractions and the Mayer-Vietoris sequence on a surface to locally extract a footprint of a geodesic circle. More general conditions of this technique are provided in the subsequent section.

In this section we assume $X$ is a geodesic surface and  a geodesic circle $\alpha$ has some orientable subsurface as a neighborhood.

\begin{Def}
 \label{DefN1}
Suppose $0 < D_1 \leq D_2$. A loop $\alpha\subset X$ is $DC(D_1, D_2)$ isolated (deformation contraction isolated) if there exist two closed nested  neighborhoods $N_1 \subset N_2$ of $\alpha$, so that:
\begin{enumerate}
 \item $N_1$ and  $N_2$ are homeomorphic to $S^1 \times [0,1]$;
 \item $N_2 \supset N(N_1, D_2)$;
 \item $\di N_1$ consists of loops $\alpha_1$ and $\alpha_2$, which are at least $D_1$ apart from each other;
 \item $\Rips(\alpha_i, r)\simeq S^1, \forall i, \forall r<D_1$;
 \item $N_2 \setminus \Int(N_1) \DC \di N_1$ and $N_1 \DC \alpha$.
\end{enumerate}

See Figure \ref{FigDef} for a sketch.

Loop $\alpha$ is $DC(D)$ isolated, if it is $DC(D,D)$ isolated. 
Loop $\alpha$ is $SDC(D)$ isolated or $SDC(D_1, D_2)$ isolated, if furthermore all deformation contractions involved are strict deformation contractions. 
\end{Def}

% !TEX root = main.tex
\begin{figure}
\begin{tikzpicture}[scale=.9]
%tube
\draw [thick](-2.8,.9) ..controls (0,-.5)..(2.8,.9);
\draw [thick](-2.8,-1.7) ..controls (0,-.4)..(2.8,-1.7);
%red gen
\draw [red, thick](0,-.75) arc (-90:90:.1 and .3);
\draw [red, thick, dashed](0,-.15) arc (90:270:.1 and .3);
\draw [red](0, -.75) node [below]{$\alpha$};
%blue gen
\draw [blue, thick](-1.3,-1.) arc (-90:90:.2 and .6);
\draw [blue, thick, dashed](-1.3,.2) arc (90:270:.2 and .6);
\draw [blue](-1.3, -1.05) node [below]{$\alpha_1$};
\draw [blue, thick](1.3,-1.) arc (-90:90:.2 and .6);
\draw [blue, thick, dashed](1.3,.2) arc (90:270:.2 and .6);
\draw [blue](1.3, -1.05) node [below]{$\alpha_2$};
%black gen
\draw [ thick](-2.8,-1.7) arc (-70:70:.5 and 1.4);
\draw [ thick](-2.8,.9) arc (110:250:.4 and 1.4);
\draw [ thick](2.8,-1.7) arc (-70:70:.5 and 1.4);
\draw [ thick, dashed](2.8,.9) arc (110:250:.4 and 1.4);
%%%%%%%%%%%%%
\end{tikzpicture}

%second row
\begin{tabular}{cc}
     \begin{tikzpicture}[scale=.9]
%tube
\filldraw [draw=black,bottom color=black!30, top color=white, thick]
(-2.8,.9) ..controls (0,-.5)..(2.8,.9)
arc (70:-70:.4 and 1.4)
..controls (0,-.4)..(-2.8,-1.7)
arc (-70:70:.5 and 1.4);
\draw [ thick](-2.8,.9) arc (110:250:.4 and 1.4);
\draw [ dashed](2.8,.9) arc (110:250:.4 and 1.4);
%red gen
\draw (0,-.75) arc (-90:90:.1 and .3);
\draw [dashed](0,-.15) arc (90:270:.1 and .3);
%blue gen
\draw (-1.3,-1.) arc (-90:90:.2 and .6);
\draw [dashed](-1.3,.2) arc (90:270:.2 and .6);
\draw (1.3,-1.) arc (-90:90:.2 and .6);
\draw [dashed](1.3,.2) arc (90:270:.2 and .6);
%arrows
\draw[->, very thick] (-2.4,-.5) -- (-1.7,-.5);
\draw[->, very thick] (2.4,-.5) -- (1.7,-.5);
\end{tikzpicture}

&

\begin{tikzpicture}[scale=.9]
%dashing
\filldraw [draw=black,bottom color=black!30, top color=white, thick]
(-1.3,.19) ..controls (0,-.29)..(1.3,.19)
arc (90:-90:.2 and .6)
..controls (0,-.63)..(-1.3,-1.)
arc (-90:90:.2 and .6);
%tube
\draw [dashed](-2.8,.9) ..controls (0,-.5)..(2.8,.9);
\draw [dashed](-2.8,-1.7) ..controls (0,-.4)..(2.8,-1.7);
%black gen
\draw [ dashed](-2.8,-1.7) arc (-70:70:.5 and 1.4);
\draw [ dashed](-2.8,.9) arc (110:250:.4 and 1.4);
\draw [ dashed](2.8,-1.7) arc (-70:70:.5 and 1.4);
\draw [  dashed](2.8,.9) arc (110:250:.4 and 1.4);
%red gen
\draw (0,-.75) arc (-90:90:.1 and .3);
\draw [dashed](0,-.15) arc (90:270:.1 and .3);
%blue gen
\draw [dashed](-1.3,.2) arc (90:270:.2 and .6);
%arrows
\draw[->, very thick] (-1,-.5) -- (-.3,-.5);
\draw[->, very thick] (1.4,-.5) -- (.7,-.5);
\end{tikzpicture}

\end{tabular}

\caption{A sketch of Definition \ref{DefN1}. The top part shows  tubular neighborhood $N_2$ along with the corresponding loops. In the ideal case we would have $N_1=N(\alpha, D_1/2)$. The bottom parts demonstrate required deformation contractions of (5).}
\label{FigDef}
\end{figure}
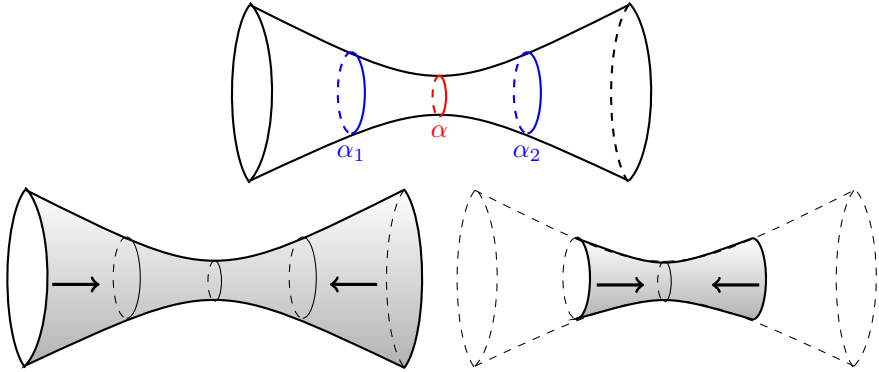

\begin{Rem}
 The conditions of Definition \ref{DefN1} stipulate that $\alpha$ has a sufficiently tame neighborhood, in which it is the shortest homotopy representative of its class. 
 %It is easy to see that if  $\alpha$ is $DC(D)$ isolated for some $D> |\alpha| /4$, then it is a geodesic circle. 
 Condition (5) implies $N_2 \DC \alpha$. Sufficient conditions implying condition (4),  i.e., conditions for $r>0$ and a topological circle $S$ in a metric space to have $\Rips(S,r)\simeq S^1$, are provided in \cite{ZV2}. For example, (4) holds if for each $i$, the cover of $\alpha_i$ by maximal open sets of diameter $r$ is a good cover. If the $\alpha_i$ are isometric to planar loops \cite{ACW} also provides sufficient conditions.
\end{Rem}

\begin{Lem}
 \label{Lem2021Add1}
Using the notation of Definition \ref{DefN1} we have   $N_2  \DC N_1 $.
\end{Lem}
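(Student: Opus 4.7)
The plan is to build the deformation contraction $F \colon N_2 \times [0,1] \to N_2$ by gluing the given deformation contraction $F' \colon (N_2 \setminus \Int(N_1)) \times [0,1] \to N_2 \setminus \Int(N_1)$ from condition (5) of Definition \ref{DefN1} with the identity on $N_1$: set $F(x,t) = F'(x,t)$ for $x \in N_2 \setminus \Int(N_1)$ and $F(x,t) = x$ for $x \in N_1$. The two pieces agree on the overlap $\partial N_1$ because $F'$ fixes $\partial N_1$, and both pieces are closed in $N_2$, so by the pasting lemma $F$ is continuous. The axioms $F(\cdot,0) = \operatorname{id}$, $F(x,1) \in N_1$, and $F(a,t) = a$ for $a \in N_1$ in Definition \ref{DefDC} are then immediate.

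The work lies in checking the monotonicity axiom $d(F(x,t'), F(y,t')) \leq d(F(x,t), F(y,t))$ for $t' > t$. I would split into three cases according to where $x, y$ lie. If both are in $N_2 \setminus \Int(N_1)$, this is the hypothesis on $F'$; if both are in $N_1$, both points are fixed and the distance is constant. The substantive case is the mixed one, with $x \in N_2 \setminus \Int(N_1)$ and $y \in \Int(N_1)$, where $F(x,t) = F'(x,t)$ moves while $F(y,t) = y$ stays fixed.

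For the mixed case I would exploit that $X$ is geodesic. Take an $X$-geodesic from $F'(x,t)$ to $y$; since $F'(x,t) \notin \Int(N_1)$ while $y \in \Int(N_1)$, this geodesic must cross $\partial N_1$ at some point $z$, and because sub-arcs of geodesics are themselves geodesics we get $d(F'(x,t), y) = d(F'(x,t), z) + d(z, y)$. As $z \in \partial N_1$ is fixed by $F'$, the hypothesis applied to the pair $(x,z)$ gives $d(F'(x,t'), z) \leq d(F'(x,t), z)$, and combined with the triangle inequality this yields
$$d(F'(x,t'), y) \leq d(F'(x,t'), z) + d(z, y) \leq d(F'(x,t), z) + d(z, y) = d(F'(x,t), y),$$
which is exactly the required monotonicity.

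The main obstacle is precisely this mixed case: the hypothesis on $F'$ controls distances only between pairs of points that both live outside $\Int(N_1)$, so it says nothing \emph{a priori} about how the distance from a moving point $F'(x,t)$ to a fixed interior point $y$ evolves. Routing through a point where an ambient geodesic first meets $\partial N_1$ is the key trick; it also explains why this gluing produces only a DC and not an SDC, since the intervening triangle inequality need not be strict even when $F'$ is.
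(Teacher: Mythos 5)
Your proposal is correct and follows the same approach as the paper: reduce to the mixed case $x\in N_2\setminus N_1$, $y\in N_1$, route an ambient geodesic from the moving point to $y$ through its crossing of $\partial N_1$, and combine the monotonicity of the given deformation contraction with the triangle inequality. You are marginally more careful than the paper's terse phrasing by re-choosing the intermediate boundary point on a geodesic from $F'(x,t)$ (rather than once from $x$ at time $0$), which is what actually delivers the full monotonicity $d(F(x,t'),y)\leq d(F(x,t),y)$ for all $t<t'$.
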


\begin{proof}
Let  $F \colon N_2 \setminus \Int(N_1) \DC \di N_1$. It suffices to show that for each pair of points $x\in N_2 \setminus N_1, y\in N_1$ we have
  $d(F(x,t'),F(y,t')) \leq d(F(x,t),F(y,t))$  for all $ t'>t$. Choose a geodesic $g$ from $x$ to $y$ and let $y^*$ be the intersection of $g$ and $\di N_1$. By the assumption $F$ is mapping $x$ ever closer to $y^*$ and thus also to $y$.
\end{proof}

\begin{Prop}
\label{PropRips1}
 Suppose $D >0$ and the loop $\alpha$ is $DC(D)$ isolated. Then for each $r<D$:
\begin{enumerate}
 \item $\Rips(N_2,r) \simeq \Rips(N_1,r) \simeq \Rips(\alpha, r)$, 
 \item $\Rips(N_2 \setminus \Int (N_1)) \simeq \Rips(\di  N_1)$, and 
 \item $\Rips(\di  N_1) =\Rips(\alpha_1, r) \sqcup \Rips(\alpha_2, r)$.
\end{enumerate}
\end{Prop}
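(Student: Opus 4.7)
The proof should be a fairly direct bookkeeping exercise that wires together the deformation contractions from Definition \ref{DefN1}, Lemma \ref{Lem2021Add1}, and Proposition \ref{PropDC}. My plan is to handle the three items in order, since each one is essentially an invocation of one of the tools already set up.

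For item (1), I would obtain the right-hand equivalence $\Rips(N_1,r)\simeq \Rips(\alpha,r)$ directly from Proposition \ref{PropDC}, using the hypothesis $N_1 \DC \alpha$ from Definition \ref{DefN1}(5). The left-hand equivalence $\Rips(N_2,r)\simeq \Rips(N_1,r)$ is the one place where I must invoke Lemma \ref{Lem2021Add1}, which tells me $N_2 \DC N_1$; feeding this into Proposition \ref{PropDC} gives the claim. Item (2) is similarly one line: the hypothesis $N_2\setminus\Int(N_1)\DC \di N_1$ in Definition \ref{DefN1}(5) combined with Proposition \ref{PropDC} immediately yields the stated homotopy equivalence. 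Note that the value of $r$ plays no role here beyond being positive.

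Item (3) is where I would stop to think for a moment, because it is a \emph{metric} statement rather than a homotopy one: I must show the Rips complex splits as an honest disjoint union, not just up to homotopy. The idea is that because we work with the open Rips complex, a simplex $\sigma$ belongs to $\Rips(\di N_1,r)$ precisely when $\diam(\sigma) < r$. Since $\di N_1 = \alpha_1 \sqcup \alpha_2$ with $d(\alpha_1,\alpha_2)\geq D_1 = D$ by Definition \ref{DefN1}(3) (using $D_1=D_2=D$), any simplex with vertices in both $\alpha_1$ and $\alpha_2$ would have diameter at least $D>r$, which is forbidden. Hence every simplex of $\Rips(\di N_1,r)$ lies entirely in $\alpha_1$ or entirely in $\alpha_2$, giving the equality (not merely equivalence) $\Rips(\di N_1,r) = \Rips(\alpha_1,r)\sqcup \Rips(\alpha_2,r)$.

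I do not expect any real obstacle here; the only subtlety worth flagging in the write-up is that item (3) uses the strict inequality in the definition of the open Rips complex, so the separation $D_1\geq r$ would be borderline but the strict $r<D$ in the hypothesis comfortably handles it. Everything else is an immediate application of results already proved.
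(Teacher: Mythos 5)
Your proof is correct and follows exactly the same route as the paper: items (1) and (2) via Lemma \ref{Lem2021Add1} and Proposition \ref{PropDC} applied to the deformation contractions of Definition \ref{DefN1}(5), and item (3) via the separation condition of Definition \ref{DefN1}(3). Your explicit remark that (3) relies on the strict $\diam<r$ in the open Rips complex together with $r<D$ is a helpful clarification, though the paper leaves it implicit.
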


\begin{proof}
(1) and (2) follows by definitions, Lemma \ref{Lem2021Add1}, and Proposition \ref{PropDC}. (3) follows by Definition \ref{DefN1}(3).
\end{proof}

We next provide our basic theorem about the footprint detection.
Since the one-dimensional persistence was completely classified in \cite{ZV}, we focus on higher-dimensional persistence. 

\begin{Thm}[Footprint detection for loops on surfaces]
 \label{ThmMain1}
Suppose $X$ is a geodesic surface, $\alpha$ is a $DC(D)$ isolated geodesic circle for some $D > 0$, and $G$ is a group.  Then 
 $\{H_k(\Rips(\alpha,r),G)\}_{r \leq D}$ is a direct summand of  $\{H_k(\Rips(X,r),G)\}_{r \leq D}$ via the inclusion induced map for all $k \geq 2$. 
 \end{Thm}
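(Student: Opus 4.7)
The plan is to prove this via a Mayer--Vietoris argument applied to a subcomplex decomposition of $\Rips(X,r)$ built from the tubular data supplied by Definition \ref{DefN1}. Set $Y := X \setminus \Int(N_1)$. My first task is to verify that for every $r < D$ one has, as simplicial complexes,
\[
\Rips(X,r) = \Rips(Y,r) \cup \Rips(N_2,r), \qquad \Rips(Y,r)\cap\Rips(N_2,r) = \Rips(N_2 \setminus \Int(N_1),r).
\]
The intersection identity is automatic. For the union identity, any simplex $\s$ of $\Rips(X,r)$ either has all vertices in $Y$, or else contains a vertex $v\in \Int(N_1)$; in the latter case every other vertex of $\s$ lies within distance $r<D = D_2$ of $v$, hence in $N(N_1,D_2) \subset N_2$ by condition (2) of Definition \ref{DefN1}. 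Thus $\s$ lies in $\Rips(Y,r)$ or $\Rips(N_2,r)$.

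With a decomposition of $\Rips(X,r)$ into subcomplexes in hand, the Mayer--Vietoris long exact sequence applies. Proposition \ref{PropRips1} together with condition (4) of Definition \ref{DefN1} describes the overlap term:
\[
\Rips(N_2 \setminus \Int(N_1),r) \simeq \Rips(\alpha_1,r) \sqcup \Rips(\alpha_2,r) \simeq S^1 \sqcup S^1,
\]
so $H_j(\Rips(N_2 \setminus \Int(N_1),r), G) = 0$ for all $j \geq 2$. For $k \geq 3$ both the $(k{-}1)$- and $k$-dimensional overlap terms vanish, and the sequence collapses to a natural isomorphism
\[
H_k(\Rips(X,r), G) \;\cong\; H_k(\Rips(Y,r), G) \oplus H_k(\Rips(N_2,r), G),
\]
where the second summand is identified with $H_k(\Rips(\alpha,r), G)$ via the homotopy equivalence $\Rips(\alpha,r) \hookrightarrow \Rips(N_2,r)$ of Proposition \ref{PropRips1}. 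The case $k=2$ is trivial: by Theorem \ref{ThmAA}, $\Rips(\alpha,r)$ is homotopy equivalent to a circle or an odd-dimensional sphere, so $H_2(\Rips(\alpha,r),G) = 0$ is a direct summand of anything.

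Because the cover $X = Y \cup N_2$ is independent of $r$, the Mayer--Vietoris sequences at scales $r < s < D$ fit into a commutative ladder with the bonding-map inclusions. The splitting above is therefore natural in $r$, giving a direct summand at the persistence level via the inclusion-induced map. I expect the most delicate point to be the simplicial union identity of the first step, which depends on the precise interplay of conditions (1)--(3) of Definition \ref{DefN1} and the strict bound $r < D$; everything afterwards is a routine Mayer--Vietoris computation enabled by Proposition \ref{PropRips1} and Theorem \ref{ThmAA}.
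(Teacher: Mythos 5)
Your proposal is correct and follows essentially the same Mayer--Vietoris argument as the paper, using the identical decomposition $A = \Rips(N_2,r)$, $B = \Rips(X\setminus\Int(N_1),r)$ and the same vanishing of the overlap term in dimensions $\geq 2$. The one thing you do that the paper leaves implicit is explicitly verify that $\Rips(X,r) = A \cup B$ as simplicial complexes via condition (2) of Definition \ref{DefN1} together with $r \leq D = D_2$; this is a genuine prerequisite for the simplicial Mayer--Vietoris sequence, so it is a welcome addition rather than a divergence.
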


\begin{proof}
 We set  the Mayer-Vietoris long exact sequence using the notation of Definition \ref{DefN1} and Proposition \ref{PropRips1}. For a fixed $r\leq D$ define 
 $$
 A=\Rips(N_2,r)\simeq \Rips(\alpha, r),
 $$ 
 $$
 B= \Rips(X \setminus \Int (N_1),r),
 $$
 $$
 A\cap B= \Rips(N_2 \setminus \Int (N_1),r) \simeq \Rips(\alpha_1, r) \sqcup \Rips(\alpha_2, r) \simeq S^1 \sqcup S^1
 $$
 Since $H_k(A \cap B)=H_{k-1}(A \cap B)=0, \forall k\geq 3$, we extract the following exact sequences:
 $$
0= H_{k}(A \cap B)\to  H_{k}(A) \oplus H_{k}(B)
 \stackrel{f_k}{\to} H_{k}(\Rips(X,r)) \to H_{k-1}(A \cap B)=0.
 $$
 This proves that $H_{k}(A)\cong H_k(\Rips(\alpha, r))$ is a direct summand in $H_k(\Rips(X,r)), \forall k\geq 3$. By Theorem \ref{ThmAA} $H_2(\Rips(\alpha, r))$ is trivial, hence the conclusion holds for all $k\geq 2$ and fixed $r$. 
 
 When considering a range $r\in (0,D)$, note that the bonding maps on persistences are induced by the inclusions, just as maps $f_k$ in the Mayer-Vietoris sequence. Hence all maps in question commute, implying that $\{H_k(\Rips(\alpha,r),G)\}_{r \leq D}$ is a direct summand of  $\{H_k(\Rips(X,r),G)\}_{r \leq D}$ for all $k \geq 2$, which completes the proof.
\end{proof}

Since $\alpha$ is a geodesic circle, its Rips filtration is known by Theorem \ref{ThmAA}, and so are the direct summands mentioned in Theorem \ref{ThmMain1}. Corollary \ref{CorDetect} summarizes such a  situation in terms of bars of persistence diagrams.

\begin{Cor}
\label{CorDetect}
 Suppose $X$ is a totally bounded geodesic surface, $\alpha$ is a   $DC(D)$ isolated geodesic circle for some $D>0$, and $G$ is a field. 
\begin{enumerate}
 \item If $\frac{l}{2l +1} |\alpha| < D \leq \frac{l+1}{2l+3} |\alpha|$ for some $l\in \NN$, then the following conclusion holds: the $PD$ of $X$ contains for each $n\in \{1, 2, \ldots, l-1\}$ a $(2n+1)$-dimensional bar $\big(\frac{n}{2n +1} |\alpha|,\frac{n+1}{2n+3} |\alpha|\big]$ and a $(2l+1)$-dimensional bar $\big(\frac{l}{2l +1} |\alpha|, w \big]$, for some $w \in [D, \frac{l+1}{2l+3} |\alpha|]$, all generated by the included Rips complex of $\alpha$.
\item If $\frac{l}{2l +1} |\alpha| < D \leq \frac{l+1}{2l+3} |\alpha|$ and $\alpha$ is a member of some lexicographically shortest homology basis,  then the conclusion of (1) holds for all $n\in \{0,1, \ldots, l-1\}$.
\item If $D \geq |\alpha|/2$, then the conclusion holds for all $n\in \{1,2, \ldots\}$. 
\end{enumerate}
 \end{Cor}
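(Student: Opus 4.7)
The plan is to transfer the barcode of $\alpha$ (completely determined by Theorem \ref{ThmAA}) into the barcode of $X$ using the direct-summand inclusion of Theorem \ref{ThmMain1} valid on $r\leq D$, and to supply the missing dimension-one case via Theorem \ref{Thm1Dim}.

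First I would rescale Theorem \ref{ThmAA} by the factor $|\alpha|$: for each $n\geq 0$, $\Rips(\alpha,r)\simeq S^{2n+1}$ precisely on $\tfrac{n}{2n+1}|\alpha|<r\leq \tfrac{n+1}{2n+3}|\alpha|$, with bonding maps being homotopy equivalences inside each such interval. Total boundedness of $X$ guarantees q-tameness and hence interval decomposition of both persistences involved. It follows that the $(2n+1)$-dimensional barcode of $\{\Rips(\alpha,r)\}_{r>0}$ is the single bar $I_n=\bigl(\tfrac{n}{2n+1}|\alpha|,\tfrac{n+1}{2n+3}|\alpha|\bigr]$, while positive even-dimensional persistence vanishes.

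For (1), I would invoke Theorem \ref{ThmMain1} in each odd dimension $2n+1\geq 3$. Its direct-summand inclusion on $r\leq D$ identifies the restricted barcode of $\alpha$ as a sub-multiset of the restricted barcode of $X$. For $n\in\{1,\ldots,l-1\}$, the hypothesis gives $\tfrac{n+1}{2n+3}|\alpha|\leq \tfrac{l}{2l+1}|\alpha|<D$, placing $I_n$ strictly inside $(0,D)$, so $I_n$ must occur as a full bar of the barcode of $X$ in dimension $2n+1$. For $n=l$, the restriction $I_l\cap (0,D]=\bigl(\tfrac{l}{2l+1}|\alpha|,D\bigr]$ produces a bar of the form $\bigl(\tfrac{l}{2l+1}|\alpha|,w\bigr]$ in $X$ with $w\geq D$. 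To pin down $w\leq \tfrac{l+1}{2l+3}|\alpha|$ I would use the inclusion-induced nature of the summand: its generator at parameter $r$ is the image under $\alpha\hookrightarrow X$ of a cycle $c_r$ representing $I_l$ in $\Rips(\alpha,r)$; since $H_{2l+1}(\Rips(\alpha,r))=0$ for $r>\tfrac{l+1}{2l+3}|\alpha|$, $c_r$ bounds in $\Rips(\alpha,r)$ and therefore in $\Rips(X,r)$, forcing the bar to die by $\tfrac{l+1}{2l+3}|\alpha|$.

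Part (2) would follow from (1) by adjoining the missing $n=0$ bar $\bigl(0,|\alpha|/3\bigr]$ in dimension $1$, supplied directly by Theorem \ref{Thm1Dim} since $\alpha$ belongs to a lexicographically shortest $H_1$-basis. Part (3) would follow from the observation that $D\geq |\alpha|/2$ together with Theorem \ref{ThmAA} makes every $I_n$ with $n\geq 1$ a subset of $(0,|\alpha|/2]\subseteq (0,D]$, so the whole infinite family of odd-dimensional bars transfers verbatim. The main technical subtlety is the top bar at $n=l$ past the cutoff $r=D$, where Theorem \ref{ThmMain1} no longer applies directly; the inclusion-induced nature of the summand is what pins down the death endpoint within the stated range $[D,\tfrac{l+1}{2l+3}|\alpha|]$.
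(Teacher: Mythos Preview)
Your proposal is correct and follows essentially the same route as the paper: the paper's proof invokes Theorem~\ref{ThmMain1} together with Theorem~\ref{ThmAA} for parts (1) and (3), and \cite{ZV} (i.e., Theorem~\ref{Thm1Dim}) for the extra $n=0$ bar in part (2), with q-tameness coming from total boundedness. Your write-up supplies more detail than the paper does---in particular the verification that $I_n\subset (0,D)$ for $n<l$ and the inclusion-induced argument bounding the death of the top bar by $\tfrac{l+1}{2l+3}|\alpha|$---but the strategy is identical.
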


\begin{proof}
 Since $X$ is totally bounded the induced persistent homology is q-tame hence the PD exists in each dimension. The validity of (1)  follows from Theorems \ref{ThmMain1} and \ref{ThmAA}. Statement (2) follows from \cite{ZV}, and (3)  follows  from Theorems \ref{ThmMain1} and \ref{ThmAA}.
\end{proof}

%\begin{Rem}
%\label{RemLoc}
% The fact that the bars of Corollary \ref{CorDetect} are inclusion induced by Theorem \ref{ThmMain1} is important from the point of view of localization of the geodesic loops. It means that, for example, if a sufficiently DC-isolated contractible geodesic circle induces a $3$-dimensional bar, then the loop itself can be found by taking the vertices of the generator of the mentioned bar and connecting them by a geodesic in a suitable manner. Thus we obtain not only the length of the underlying geodesic loop, but the loop itself.
%\end{Rem}

Theorem \ref{ThmMain1} considers a case of a single loop $\alpha$. Using an inductive argument  Corollary \ref{CorDetect456} demonstrates that sufficiently disjoint geodesic circles generate separate footprints. In particular, for each loop of such a collection of geodesic circles we get distinct bars mentioned in Corollary \ref{CorDetect}.

\begin{Cor}
 \label{CorDetect456} 
 Suppose $X$ is a geodesic surface, $m\in \NN$, $\alpha_1, \ldots, \alpha_m$ are $DC(D)$ isolated geodesic circles for some $D > 0$ with the corresponding $N_2$ neighborhoods $N_{2,1}, \ldots, N_{2,m}$ from Definition \ref{DefN1} being disjoint, and $G$ is a group.  Then 
 $\oplus_{i=1}^m\{H_k(\Rips(\alpha_i,r),G)\}_{r \leq D}$ is a direct summand of  $\{H_k(\Rips(X,r),G)\}_{r \leq D}$ via the inclusion induced map for all $k \geq 2$. 
\end{Cor}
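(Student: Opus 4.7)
The plan is to adapt the Mayer--Vietoris argument of the proof of Theorem \ref{ThmMain1} by peeling off the neighborhoods of all $m$ geodesic circles simultaneously. Since the $N_{2,i}$ are pairwise disjoint, the local data around distinct $\alpha_i$ decouple and a single long exact sequence will deliver all $m$ direct summands at once. Equivalently, one can iterate Theorem \ref{ThmMain1} inductively on $m$, removing one $\Int(N_{1,i})$ at a time; either packaging leads to the same identification.

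Fix $r\le D$ and set
$$
A=\bigcup_{i=1}^{m}\Rips(N_{2,i},r),\qquad B=\Rips\!\Bigl(X\setminus\bigcup_{i=1}^{m}\Int(N_{1,i}),r\Bigr),
$$
both as subcomplexes of $\Rips(X,r)$. Disjointness of the $N_{2,i}$'s forces every simplex of $A$ to have all its vertices in a single $N_{2,i}$, so $A$ is the honest disjoint union $\bigsqcup_i\Rips(N_{2,i},r)$, and Proposition \ref{PropRips1}(1) yields $A\simeq\bigsqcup_i\Rips(\alpha_i,r)$. The key metric input is Definition \ref{DefN1}(2): any point outside $N_{2,i}$ is at distance at least $D\ge r$ from $N_{1,i}$. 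Combined with $\diam(\sigma)<r\le D$, it shows $\Rips(X,r)=A\cup B$: either all vertices of $\sigma$ lie in a common $N_{2,i}$ (so $\sigma\in A$), or some pair fails this, in which case no vertex of $\sigma$ can lie in any $N_{1,j}$ (such a vertex would force one of its neighbors to be within $r$ of $N_{1,j}$ while outside $N_{2,j}$, contradicting (2)), so $\sigma\in B$. The same reasoning produces
$$
A\cap B=\bigsqcup_{i=1}^{m}\Rips\bigl(N_{2,i}\setminus\Int(N_{1,i}),r\bigr)\simeq\bigsqcup_{i=1}^{m}(S^{1}\sqcup S^{1})
$$
via Proposition \ref{PropRips1}(2,3), using that a vertex of $N_{2,i}$ is automatically outside every $N_{1,j}$ with $j\ne i$ by disjointness.

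Mayer--Vietoris now gives, for every $k\ge 3$, an isomorphism $H_k(A)\oplus H_k(B)\xrightarrow{\cong}H_k(\Rips(X,r))$ induced by the inclusions, since both $H_k(A\cap B)$ and $H_{k-1}(A\cap B)$ vanish. Consequently $\bigoplus_i H_k(\Rips(\alpha_i,r))\cong H_k(A)$ is a direct summand of $H_k(\Rips(X,r))$ realized by the inclusion-induced map. For $k=2$ every $H_2(\Rips(\alpha_i,r))$ vanishes by Theorem \ref{ThmAA}, so the conclusion holds trivially. As in Theorem \ref{ThmMain1}, all maps are inclusion-induced and therefore commute with the bonding maps of the filtration, so the splitting lifts to a splitting of persistence modules on $0<r\le D$.

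The step I expect to be the main obstacle is the verification $\Rips(X,r)=A\cup B$: although the $N_{2,i}$ are disjoint they may come arbitrarily close in $X$, so one has to carefully rule out simplices that both straddle distinct $N_{2,i}$'s and penetrate some inner neighborhood $N_{1,j}$. The $D$-thickening encoded in Definition \ref{DefN1}(2) is precisely what pushes those offending vertices away from every $N_{1,j}$, allowing them into $B$ and completing the Mayer--Vietoris cover.
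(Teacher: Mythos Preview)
Your argument is correct and follows essentially the same Mayer--Vietoris strategy as the paper: the paper phrases it as an induction, applying the proof of Theorem \ref{ThmMain1} successively to $X,\ X\setminus\Int(N_{1,1}),\ X\setminus(\Int(N_{1,1})\cup\Int(N_{1,2})),\ldots$, whereas you carry out the decomposition for all $m$ loops at once (and you already note the equivalence). Your simultaneous packaging has the minor advantage of staying inside $\Rips(X,r)$ throughout, sidestepping any question about metrics on the intermediate spaces $X\setminus\bigcup_{i\le j}\Int(N_{1,i})$.
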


\begin{proof}
Assume $N_{1,1}, \ldots, N_{1,m}$ are neighborhoods of $\alpha_1, \ldots, \alpha_m$ corresponding to neighborhood $N_1$ from Definition \ref{DefN1}. 
Inductively apply the proof of Theorem \ref{ThmMain1} to $X, \quad X \setminus \Int(N_{1,1}), \quad X \setminus  ( \Int(N_{1,1}) \cup \Int(N_{1,2}))$, ...
\end{proof}

Theorem \ref{ThmMain1} is a prototype for footprint detection of subspaces onto which there are nice deformation contractions, and can be adapted to other subspaces. It is essentially a Hopf-type effect in persistence: one-dimensional geometric features generate higher-dimensional algebraic objects. It explains the existence of all blue bars and the red $3$-dimensional bar in Figure \ref{FigEssence}.

%---------------------------------------------------------------------------
%%%%%%%%%%%%%%%%%%%%%%%%%%%%%
\section{A general framework for footprint detection via deformation contractions}
\label{SectSec}

Using the idea of the previous section we provide a general framework for footprint detection in metric spaces via deformation contractions. In this case we are detecting parts of the persistent homology of a subset $Z\subset X$ in the persistent homology of $X$. 

Suppose $X$ is a (not necessarily geodesic) metric space and let $Z\subset X$ be a  subspace. When considering a subset of $X$ as a metric space, we always assume it is equipped with the restriction of the metric on $X$.

\begin{Def}
 \label{DefN2}
Suppose $0 < a \leq b$, $G$ is a group and $k\in \NN$. A subspace $Z$ of a metric space $X$ is $DC(\langle a, b \rangle ; k, G)$ isolated if there exist two closed nested  neighborhoods $N_1 \subset N_2$ of $Z$, so that:
\begin{enumerate}
 \item $N_2 \supset N(N_1, r),  \forall  r\in \langle a, b \rangle$;
 \item  for each $r\in \langle a, b \rangle$, the condition $H_k(\Rips(Z,r),G)\neq 0$ implies that the following maps are trivial:
	\begin{enumerate}
	\item the inclusion-induced maps $H_k(\Rips(\di N_1,r),G) 
	\to H_k(\Rips( N_1,r),G)$ and $H_k(\Rips(\di N_1,r),G) \to 
	H_k(\Rips( X \setminus \Int(N_1),r),G)$;
	\item the boundary map 
	$$
	H_k(\Rips(X,r),G) \to H_{k-1}(\Rips(N_2 \setminus \Int 	
	(N_1),r))
	$$
	arising from the Mayer-Vietoris long exact sequence for a 	
	decomposition of $\Rips(X,r)$ into $A=\Rips(N_2,r)$ and 
	$B= 
	\Rips(X \setminus \Int (N_1),r)$;
	\end{enumerate}
 \item $N_2 \setminus \Int(N_1) \DC \di N_1, N_2 \DC N_1, $ and $N_1 \DC Z$.
\end{enumerate}
Notation $SDC(\langle a, b \rangle ; k, G)$ denotes the strict version of the defined property, i.e., a version in which all deformation contractions of (3) are strict deformation contractions.
\end{Def}

Proposition \ref{PropRips15} is an adaptation of Proposition \ref{PropRips1} and can be proved in the same way. 

\begin{Prop}
\label{PropRips15}
 Suppose $0<a\leq b$, $X$ is a metric space,  and $Z\subset X$ is a subspace for which (1) and (3) of Definition \ref{DefN2} hold. Then for each $r\in \langle a, b \rangle$:
\begin{enumerate}
 \item $\Rips(N_2,r) \simeq \Rips(N_1,r) \simeq \Rips(Z, r)$, and
 \item $\Rips(N_2 \setminus \Int (N_1)) \simeq \Rips(\di  N_1)$.
\end{enumerate}
\end{Prop}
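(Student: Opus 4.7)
The plan is to obtain both statements as immediate consequences of Proposition \ref{PropDC}, which converts each deformation contraction listed in condition (3) of Definition \ref{DefN2} into a homotopy equivalence of the corresponding Rips complexes at every scale $r>0$. Condition (1) of Definition \ref{DefN2} is not needed here; it is the analogue (for the Čech case) that would require the thickness hypothesis via Proposition \ref{PropCDC}, but we work only with Rips complexes.

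For part (1), I would apply Proposition \ref{PropDC} twice. The deformation contraction $N_1 \DC Z$ furnishes a homotopy equivalence $\Rips(Z,r)\hookrightarrow \Rips(N_1,r)$, and the deformation contraction $N_2 \DC N_1$ furnishes a homotopy equivalence $\Rips(N_1,r)\hookrightarrow \Rips(N_2,r)$. Composing the two inclusion-induced equivalences yields the chain
\[
\Rips(Z,r) \simeq \Rips(N_1,r) \simeq \Rips(N_2,r),
\]
valid for every $r\in\langle a,b\rangle$ (in fact for all $r>0$).

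For part (2), I would apply Proposition \ref{PropDC} once more, this time to the deformation contraction $N_2 \setminus \Int(N_1) \DC \di N_1$ guaranteed by condition (3) of Definition \ref{DefN2}. The resulting inclusion-induced homotopy equivalence
\[
\Rips(\di N_1,r)\hookrightarrow \Rips(N_2 \setminus \Int(N_1),r)
\]
gives precisely the second conclusion.

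There is no real obstacle: the statement is essentially a bookkeeping of three invocations of Proposition \ref{PropDC}, exactly paralleling the proof of Proposition \ref{PropRips1}. The only minor point worth noting is that no disjointness analogue of Proposition \ref{PropRips1}(3) is asserted here, because in the general setting of Definition \ref{DefN2} the boundary $\di N_1$ need not split as a disjoint union of circles; the separate role of that decomposition has been absorbed into the cleaner deformation contraction hypothesis.
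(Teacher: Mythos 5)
Your proof is correct and essentially the same as the paper's, which simply refers to the proof of Proposition \ref{PropRips1} (three applications of Proposition \ref{PropDC}), with the simplification that Lemma \ref{Lem2021Add1} is unneeded here since $N_2 \DC N_1$ is already an explicit hypothesis in Definition \ref{DefN2}(3). Your side remark that condition (1) of Definition \ref{DefN2} is not actually used in this Rips-only statement is also accurate.
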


\begin{Thm}[Footprint Detection Framework]
 \label{ThmMain15}
Suppose $X$ is a metric space, $G$ is a group, $ k\in \NN, 0<a\leq b$, and $Z\subset X$ is $DC(\langle a, b \rangle; k, G)$ isolated.  Then 
 $\{H_k(\Rips(Z,r),G)\}_{r\in \langle a, b \rangle}$ is a direct summand of  $\{H_k(\Rips(X,r),G)\}_{r\in \langle a, b \rangle}$ via the inclusion induced map.
 \end{Thm}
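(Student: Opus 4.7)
The plan is to imitate the proof of Theorem~\ref{ThmMain1}, using the Mayer--Vietoris sequence with the decomposition $A=\Rips(N_2,r)$, $B=\Rips(X\setminus \Int(N_1),r)$, and $A\cap B=\Rips(N_2\setminus \Int(N_1),r)$, since the axioms of Definition~\ref{DefN2} are tailored precisely to kill the obstructions to the splitting we want. First I would fix $r\in\langle a,b\rangle$ and invoke Proposition~\ref{PropRips15} to identify, up to homotopy equivalence induced by the inclusions, the three pieces: $A\simeq \Rips(Z,r)$ and $A\cap B\simeq \Rips(\partial N_1,r)$. These homotopy equivalences commute with the Mayer--Vietoris connecting homomorphisms because they come from inclusions compatible with the deformation contractions of Definition~\ref{DefN2}(3).

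Next I would split into cases. If $H_k(\Rips(Z,r),G)=0$, there is nothing to prove for this value of $r$, since the zero module is trivially a direct summand. Otherwise the hypotheses in Definition~\ref{DefN2}(2) are in force. Condition (2)(a), together with the identifications $A\cap B\simeq \Rips(\partial N_1,r)$, $\Rips(N_1,r)\simeq A$, and $\Rips(X\setminus \Int(N_1),r)=B$, tells us that both components of the Mayer--Vietoris map $H_k(A\cap B)\to H_k(A)\oplus H_k(B)$ vanish. Condition (2)(b) tells us that the next connecting map $H_k(\Rips(X,r))\to H_{k-1}(A\cap B)$ also vanishes. Hence the Mayer--Vietoris exact sequence collapses to
\[
0\longrightarrow H_k(A)\oplus H_k(B)\xrightarrow{\;(i_*,j_*)\;} H_k(\Rips(X,r))\longrightarrow 0,
\]
where $i,j$ are the inclusions into $\Rips(X,r)$. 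This is an isomorphism, so the inclusion-induced map $H_k(\Rips(Z,r),G)\cong H_k(A)\hookrightarrow H_k(\Rips(X,r),G)$ realises $H_k(\Rips(Z,r),G)$ as a direct summand with complement $H_k(B)$.

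To promote this level-wise statement to the level of persistence modules over the range $\langle a,b\rangle$, I would observe that all the maps involved---the bonding inclusions of the Rips filtrations of $Z$, $X$, $N_1$, $N_2$, $\partial N_1$, $X\setminus \Int(N_1)$, as well as the inclusions $A\hookrightarrow \Rips(X,r)$ and $B\hookrightarrow \Rips(X,r)$---are induced by genuine simplicial inclusions. The Mayer--Vietoris sequence is natural with respect to such inclusions, so the splitting obtained above is compatible with the bonding maps on both sides. Consequently the collection of inclusion-induced injections $H_k(\Rips(Z,r),G)\hookrightarrow H_k(\Rips(X,r),G)$ for $r\in\langle a,b\rangle$ commutes with the bonding maps and assembles into a direct summand of persistence modules, completing the proof.

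The main obstacle will be bookkeeping at scales where $H_k(\Rips(Z,r),G)$ is zero: one must check that the trivially-split zero summand at such $r$ remains consistent with the nontrivial splitting at neighbouring scales. This is automatic because the splitting is given throughout by the inclusion-induced map, which is natural in $r$, but it is the one place where one has to be careful not to accidentally invoke condition~(2) at a scale where its hypothesis fails.
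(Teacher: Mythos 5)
Your proposal is correct and follows essentially the same route the paper sketches: decompose $\Rips(X,r)$ as $A=\Rips(N_2,r)$ and $B=\Rips(X\setminus\Int(N_1),r)$, use Proposition~\ref{PropRips15} together with Proposition~\ref{PropDC} (since $N_2\DC N_1$ gives $\Rips(N_1,r)\simeq\Rips(N_2,r)$) to translate condition~(2)(a) into vanishing of the Mayer--Vietoris map $H_k(A\cap B)\to H_k(A)\oplus H_k(B)$, use~(2)(b) for vanishing of the connecting map, and invoke naturality of Mayer--Vietoris under simplicial inclusions to pass from the level-wise splitting to a splitting of persistence modules. Your explicit handling of the scales where $H_k(\Rips(Z,r),G)=0$ (where condition~(2) is not guaranteed to hold but the zero summand splits tautologically via the zero map, consistently with neighbouring scales because the splitting is always given by the inclusion-induced map) is exactly the right care to take and is left implicit in the paper's one-line proof.
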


\begin{proof}
 The proof follows the same structure as that of Theorem \ref{ThmMain1}, using Proposition \ref{PropRips15} and the conditions of Definition \ref{DefN2} in the corresponding Mayer-Vietoris sequence for a decomposition of $\Rips(X,r)$ into $A=\Rips(N_2,r)$ and $B= \Rips(X \setminus \Int (N_1),r)$.
\end{proof}

In our future work we intend to tackle the problems of obtaining persistences of simple spaces (such as spheres) and controlling the homology of a neighborhood. With such results we could provide more convenient conditions for specific footprint detection situations introduced here.

%---------------------------------------------------------------------------
%%%%%%%%%%%%%%%%%%%%%%%%%%%%%
\section{Nullhomologies of loops}
\label{SectNull}

In this technical section we provide nice nullhomologies of samples of loops, which will be used in Section \ref{Sect2D}.
%\begin{Prop}
%\label{Prop1}
% Suppose $A$ is a geodesic circle of circumference $|A|$, $G$ is a group and $r>|A|/3$. Then for any $\eps>0$ and any $\eps$-sample $L_A$ of $A$, there exist $g_i\in G$ and triangles $\sigma_i\in \Rips(A,r)$ so that the following equality of chains holds:
%$L_A=\di\sum_i g_i \sigma_i$. Furthermore, if $L_A=\di\sum_j g'_j \sigma'_j$ is another such decomposition, then $\sum_j g'_j \sigma'_j-\sum_i g_i \sigma_i$ is a boundary.
%\end{Prop}
%
%\begin{proof}
%By Theorem \ref{ThmAA}, the first homology group of $\Rips(A,r)$ is trivial, meaning that the mentioned decomposition exists. The second part follows since  the second homology group of $\Rips(A,r)$ is trivial by Theorem \ref{ThmAA} as well.
%\end{proof}
Definition \ref{DefCirc} introduces a cyclic order of points on a loop. 

\begin{Def}\label{DefCirc}
 Choose an orientation on $S^1$. Notation $x_0 \prec x_1 \prec \ldots \prec x_k \prec x_0$ means that points $x_i$ appear on $S^1$ in the suggested order along the chosen orientation of $S^1$. In particular, this means that for each $i \in \NN \mod (k+1)$, the interval $(x_i, x_{i+1})$ on $S^1$  along the chosen orientation contains no point $x_j$. Equivalently, the collection of the mentioned intervals $(x_i, x_{i+1})$ on $S^1$  is disjoint.

If $\alpha \colon S^1 \to X$ is a loop, then $\alpha(t_0) \prec \alpha(t_1) \prec \ldots \prec \alpha(t_k) \prec \alpha(t_0)$ means that $t_0 \prec t_1 \prec \ldots \prec t_k \prec t_0$ along the chosen orientation.
\end{Def}

Throughout this section we will be referring to a loop $(A,d)$ as a (not necessarily geodesic) metric space homeomorphic to $S^1$. In this context we will also be using the derived metric space $(A', d')$, which denotes set $A$ equipped with the geodesic metric, i.e., a metric where the distance between two points is the length of the shortest segment in $(A,d)$ between the points. 
As a technical prerequisite we will assume that the circumference of $(A,d)$ is finite, so that the distance $d'$ attains only finite values.

\begin{Def}
 Suppose $A$ is a loop. Three points $x_0, x_1, x_2\in A$ are equidistant on $A$, if for some chosen orientation on $A$, $x_0 \prec x_1 \prec x_2 \prec x_0$ and the lengths of the closed intervals $[x_{0},x_{1}], [x_{1},x_{2}]$ and $[x_{2},x_{0}]$ along $A$ are the same.
\end{Def}

It is clear that the definition of equidistant points is independent of the choice of an orientation. The following lemma shows that three equidistant points, or even their appropriate approximations, may be used to obtain nice nullhomotopies of loops in Rips complexes.

\begin{Lem}
\label{LemPre1}
 Suppose $(A,d)$ is an oriented loop of circumference $1$ equipped with a (not necessary geodesic) metric $d$, and choose $r> 1/3$. Let $L$ be an $r$-sample of $A$ given by $(x_0, x_1, \ldots, x_k, x_{k+1}=x_0)$. 
Suppose there exist $t_0<t_1<t_2$, so that $x_{t_0} \prec x_{t_1} \prec x_{t_2} \prec x_{t_0}$ with the lengths of segments of the closed intervals $[x_{t_0},x_{t_1}], [x_{t_1},x_{t_2}]$ and $[x_{t_2},x_{t_0}]$ along $A$ being less than $r$.

Then there exist pairwise different triangles $\sigma_i$ in $\Rips(L, r)$, so that:
\begin{itemize}
\item $L=\di \sum_{i=1}^k \sigma_i$, and
\item if $x_p\in L$ and $x_q\in L$ are contained in some $\sigma_i$, then the length of the shortest segment on $A$ between $x_p$ and $x_q$ is less than r.
\end{itemize}
Furthermore, if $L=\di \sum_{j=1}^m \sigma'_j$ is another such decomposition, then $\sum_{i=1}^k \sigma_i-\sum_{j=1}^m \sigma'_j$ is a boundary in $\Rips(A,r)$.
\end{Lem}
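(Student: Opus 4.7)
The plan is to construct $\sum_i \sigma_i$ explicitly as three simplicial fans joined by a single central triangle, and then to deduce the uniqueness clause by recognizing the difference of any two valid decompositions as a $2$-cycle in the Rips complex of the geodesic realization $(A', d')$, where Theorem~\ref{ThmAA} forces $H_2 = 0$ because $r > 1/3$.

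For the construction I would first note that any two points on a common arc among $[x_{t_0},x_{t_1}]$, $[x_{t_1},x_{t_2}]$, $[x_{t_2},x_{t_0}]$ have shortest-arc distance, and hence $d$-distance, strictly less than $r$ (using $d \le d'$). I then fan-triangulate each arc from its initial endpoint; for instance, on $[x_{t_0},x_{t_1}]$ I take the triangles $(x_{t_0}, x_i, x_{i+1})$ for $t_0 < i < t_1$. A standard telescoping identifies the boundary of this fan with the $L$-path from $x_{t_0}$ to $x_{t_1}$ minus the chord $(x_{t_0}, x_{t_1})$, so the three fans together contribute $L$ minus the three chord edges. Adjoining the central triangle $(x_{t_0}, x_{t_1}, x_{t_2})$, which is a $2$-simplex of $\Rips(L,r)$ by the arc-length hypothesis, cancels those three missing chords and yields $L = \di \sum_i \sigma_i$. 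Every triangle has vertex pairs in a common arc of length less than $r$, which gives the required shortest-arc bound, and the triangles are pairwise distinct because each fan triangle is uniquely identified by its consecutive $L$-edge $(x_i, x_{i+1})$, whereas the central triangle contains none of these edges.

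For uniqueness, if $L = \di \sum_j \sigma'_j$ is another decomposition satisfying the arc-length condition, then $c = \sum_i \sigma_i - \sum_j \sigma'_j$ is a $2$-cycle each of whose triangles has its three vertex pairs at shortest-arc distance less than $r$. Thus $c$ lies in the sub-complex $\Rips(A', r) \subseteq \Rips(A, r)$. Since $(A', d')$ is a geodesic circle of circumference $1$ and $r > 1/3$, Theorem~\ref{ThmAA} ensures $\Rips(A', r)$ is homotopy equivalent to an odd-dimensional sphere of dimension at least three, or contractible once $r \ge 1/2$; either way $H_2(\Rips(A', r)) = 0$, so $c$ bounds in $\Rips(A', r)$ and therefore in $\Rips(A, r)$. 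The main delicate point is the bookkeeping between the metrics $d$ and $d'$: the triangles must be $2$-simplices of $\Rips(L, r)$ built from the ambient metric $d$, while the nullbound for $c$ must be sought in the smaller sub-complex defined by $d'$ so that Theorem~\ref{ThmAA} applies; degenerate arcs with fewer than two intermediate sample vertices collapse the corresponding fan to one or zero triangles but do not affect the telescoping.
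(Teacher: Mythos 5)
Your proposal is correct and follows essentially the same route as the paper: a fan triangulation anchored at the three roughly-equidistant points $x_{t_0},x_{t_1},x_{t_2}$ together with the central triangle, and then deducing the uniqueness clause by observing that the difference of two such decompositions is a $2$-cycle in $\Rips(A',r)$, where $H_2$ vanishes by Theorem~\ref{ThmAA}, and that the contraction $(A',d')\to(A,d)$ gives the inclusion $\Rips(A',r)\hookrightarrow\Rips(A,r)$.
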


\begin{proof} 
The triangles $\sigma_i$ are depicted in Figure \ref{FigNullhomotopy}. The shaded triangle is $(x_{t_0}, x_{t_1}, x_{t_2})$. For all $l\in \ZZ (\bmod \ 3)$ and for all $t_l < p< t_{l+1 \! (\bmod 3)}$ add a cone (with the appropriate orientation) over the segment $(x_{p-1}, x_{p})$ with apex $x_{t_l}$. Hence we have satisfied the two bullet points.

The construction works even if two of the points $x_{t_l}$ coincide: in such a case we remove those triangles $\sigma_i$ in the expression above which become degenerate. 

% !TEX root = main.tex
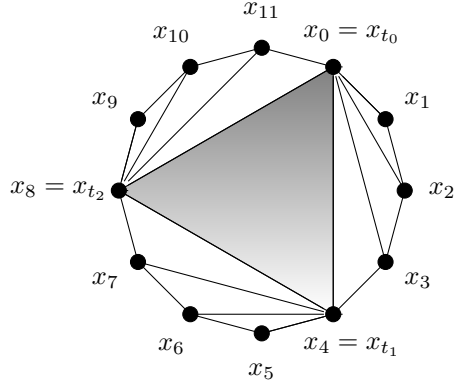
\begin{figure}
\begin{tikzpicture}[scale=1]

\node (q1) at (60:1.9){}; 

\node (q2) at (180: 1.9){};

\node (q3) at (-60:1.9){};

\draw[top color=gray,bottom color=white, fill opacity =1] (60:1.9) -- (180:1.9) -- (-60:1.9) -- cycle;
	%%%%%%%%%%%%%5
\foreach \x  in {1,2,3,5,6,7,9,10,11}
	\draw (-\x * 30+90:1.9) -- (-\x * 30+90:2.0) (-\x * 30+60:2.4) node {$x_{\x }$};
\foreach \x  in {0}
	\draw (-\x * 30+90:1.9) -- (-\x * 30+90:2.0) (-\x * 30+60:2.4) node {$x_{\x }=x_{t_0}$};
\foreach \x  in {4}
	\draw (-\x * 30+90:1.9) -- (-\x * 30+90:2.0) (-\x * 30+60:2.4) node {$x_{\x }=x_{t_1}$};
\foreach \x  in {8}
	\draw (-\x * 30+90:1.9) -- (-\x * 30+90:2.0) (-\x * 30+60:2.7) node {$x_{\x }=x_{t_2}$};
	%%%%%%%%%%%%%5
\foreach \x  in {0, 1, ..., 11}
	\draw [fill=black] (-\x * 30+90:1.9) circle (.1cm);
\foreach \x  in {0, 1, ..., 11}
	\draw (-\x * 30+90:1.9) -- (-\x * 30+120:1.9);

%\draw[top color=gray,bottom color=white, fill opacity =1] (1,0) -- (-.5, .7) -- (-.5,-.7) -- cycle;

\foreach \x  in { 2, 3,..., 6}
	\draw (q1) -- (-\x * 30+120:1.9);
\foreach \x  in { 6,7, ..., 10}
	\draw (q3) -- (-\x * 30+120:1.9);
\foreach \x  in {10, 11, 12, 1, 2}
	\draw (q2) -- (-\x * 30+120:1.9);

%%%%%%%%%%%%%
\end{tikzpicture}
\caption{Sketch of proof of Lemma \ref{LemPre1}.}
\label{FigNullhomotopy}
\end{figure}
 
To prove the second part let $(A', d')$ denote set $A$ equipped with the geodesic metric, i.e., a metric where the distance between two points is the length of the shortest segment in $(A,d)$ between the points.  
 Each $\sigma_i$ mentioned above is also contained in $\Rips(A', r)$, as only lengths of the connecting path segments were used in the argument and no special properties of the metric were assumed. 
 The second homology group of $\Rips(A', r)$ is trivial for all $r$ and all coefficients by Theorem \ref{ThmAA}, hence $\sum_{i=1}^k \sigma_i-\sum_{j=1}^m \sigma_j$ is a boundary in $\Rips(A',r)$. Since the identity $(A',d') \to (A,d)$ is a contraction, the induced map $\Rips(A',r)\to \Rips(A,r)$ is an inclusion, hence $\sum_{i=1}^k \sigma_i-\sum_{j=1}^m \sigma_j$ is a boundary in $\Rips(A,r)$ as well.
\end{proof}

\begin{Cor}
\label{CorPre1}
 Suppose $(A,d)$ is a loop of circumference $1$ equipped with a (not necessary geodesic) metric $d$, and choose $r> 1/3$. Let $L$ be a $(r-1/3)$-sample of $A$ given by $(x_0, x_1, \ldots, x_k, x_{k+1}=x_0)$. 
Then there exist pairwise different triangles $\sigma_i$ in $\Rips(L, r)$, so that $L=\di \sum_{i=1}^k \sigma_i$.
Furthermore, if $L=\di \sum_{j=1}^m \sigma'_j$ is another such decomposition, then $\sum_{i=1}^k \sigma_i-\sum_{j=1}^m \sigma'_j$ is a boundary in $\Rips(A,r)$.
\end{Cor}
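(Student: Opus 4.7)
The plan is to reduce the corollary directly to Lemma \ref{LemPre1} by locating three sample points in $L$ that partition $A$ into sub-arcs each of length strictly less than $r$. The sharper $(r-1/3)$-sampling density is precisely the slack needed over the threshold $1/3$.

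First I would locate the triple. Take $x_{t_0}=x_0$ and, walking along $A$ in the chosen orientation, let $t_1$ be the smallest index at which the cumulative arc length from $x_{t_0}$ reaches $1/3$. Since consecutive sample intervals have length less than $r-1/3$, the minimality of $t_1$ forces this arc to have length strictly less than $1/3 + (r-1/3) = r$. Repeating from $x_{t_1}$ produces $x_{t_2}$; the residual arc from $x_{t_2}$ back to $x_{t_0}$ then has length at most $1 - 2/3 = 1/3 < r$. Any $(r-1/3)$-sample is an $r$-sample, so Lemma \ref{LemPre1} applies to the triple $x_{t_0} \prec x_{t_1} \prec x_{t_2}$ and delivers pairwise distinct triangles $\sigma_i \subset \Rips(L,r)$ with $L = \di \sum_i \sigma_i$.

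For the uniqueness part, let $(A',d')$ denote $A$ equipped with its geodesic (arc-length) metric. Since $d \leq d'$, the identity $(A',d') \to (A,d)$ is $1$-Lipschitz and therefore induces an inclusion $\Rips(A',r) \hookrightarrow \Rips(A,r)$. By construction, every triangle produced by Lemma \ref{LemPre1} has its three vertices in a common sub-arc of length less than $r$, so their pairwise $d'$-distances are bounded by $r$; hence each $\sigma_i$ already lies in $\Rips(A',r)$. Given a second decomposition $L = \di \sum_j \sigma'_j$ of the same form, the difference $\sum_i \sigma_i - \sum_j \sigma'_j$ is a $2$-cycle in $\Rips(A',r)$. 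Theorem \ref{ThmAA} yields $H_2(\Rips(A',r)) = 0$ for every $r > 1/3$ (the complex is homotopy equivalent to an odd-dimensional sphere of dimension at least $3$, or contractible once $r \geq 1/2$), so the cycle bounds in $\Rips(A',r)$ and therefore in $\Rips(A,r)$.

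The main obstacle is that Step 1 implicitly translates the $d$-diameter sampling condition into an arc-length bound on consecutive segments. When $d$ is comparable to the arc-length metric this is immediate; in a genuinely non-geodesic metric one might instead adjoin three equidistant auxiliary points $y_0, y_1, y_2 \in A$ to $L$, apply Lemma \ref{LemPre1} to $L' = L \cup \{y_0, y_1, y_2\}$, and then eliminate each $y_j$ by a local re-triangulation using that $d(y_j, x_i) < r - 1/3$ for both flanking samples $x_i$. This contingency is the only place where the generality of $d$ threatens the reduction.
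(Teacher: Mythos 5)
Your main-line argument is essentially the paper's: set $t_0=0$, walk along $A$ collecting arc-length until it first passes $1/3$ to get $x_{t_1}$, repeat to get $x_{t_2}$, observe each of the three resulting arcs has length less than $r$ (using the $(r-1/3)$-sampling slack), and then invoke Lemma~\ref{LemPre1}; the uniqueness clause follows directly from Lemma~\ref{LemPre1} exactly as you note, so you need not repeat the $(A',d')$ argument. The caveat you flag at the end is a genuine subtlety worth raising, but it is not a new gap in your proposal: the paper's own one-line proof makes the same implicit pass from the $d$-diameter condition $\diam(\alpha|_{[t_i,t_{i+1}]})<r-1/3$ to an arc-length bound on consecutive segments, so your proposal matches the paper's route and your proposed auxiliary-point workaround need not be developed further here.
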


\begin{proof}
 Fixing an orientation of $A$, choose $t_0<t_1<t_2$, so that $x_{t_0} \prec x_{t_1} \prec x_{t_2} \prec x_{t_0}$ with the lengths of the segments of the closed intervals $[x_{t_0},x_{t_1}], [x_{t_1},x_{t_2}]$ and $[x_{t_2},x_{t_0}]$ along $A$ being less than $r$. For example, we can set $t_0=0$, choose $t_1$ so that the length of $A$ along the chosen orientation to $x_{t_1}$ is between $1/3$ and $r$ (such $x_{t_1}$ exists since $L$ is a $(r-1/3)$-sample), etc. 
\end{proof}

%---------------------------------------------------------------------------
%%%%%%%%%%%%%%%%%%%%%%%%%%%%%
\section{Two-dimensional footprint}
\label{Sect2D}

In this section we prove the most technical result of this paper, describing an appearance of two-dimensional footprints. A sufficiently DC isolated geodesic circle in a compact geodesic space which is also a member of a shortest homology basis, generates a one-dimensional footprint by \cite{ZV}. If such a circle is not a member of a minimal homology basis, then the result of this section shows that it in fact induces a two-dimensional footprint. In contrast to the results of the previous sections, this footprint does not appear in the persistence of the circle itself but arises from the geometry of the entire space. As described below, such a footprint can be used to separate geodesic circles, detectable by higher dimensional footprints as described in the previous sections, into two groups: members of a shortest homology basis, and the rest of the geodesic circles. As an added benefit, the mentioned two-dimensional footprint can provide a good approximation for homotopy height.

The results in this section focus on geodesic circles on surfaces. We intend to describe more general results in a forthcoming paper.

\begin{Thm}
 \label{ThmMain2}
Suppose $X$ is a geodesic surface, $\alpha$ is an $DC(D,3 D/2)$ isolated loop for some $D > |\alpha|/3$, and $G$ is a group.  Assume $\alpha$ is homologous in $H_1(X,G)$ to a $G$-combination of loops $\beta_1, \beta_2, \ldots, \beta_k$ of length at most $|\alpha|$, none of which intersects $N_1$. Then the following hold.
\begin{enumerate}
\item For each $r\in (|\alpha|/3, D]$ there exists a non-trivial $Q_r\in H_2(\Rips(X,r),G)$ so that:
\begin{enumerate}
\item  For each pair $q_1< q_2$ of parameters from $(|\alpha|/3, D]$ we have $i^G_{q_1,q_2}(Q_{q_1})=Q_{q_2}$.
\item For any $q\in (|a|/3, D]$ there exists no $q_0\leq |\alpha|/3$, for which $Q_q$ is in the image of $i^G_{q_0, q}$.
 \item If  $\alpha$ is homotopic to some shorter geodesic circle $\beta$  in $X$ and $3 q_3$ is larger than the homotopy height between $\alpha$ and $\beta$, then $i^G_{q,q_3}(Q_{q})$ is trivial for any $q\in (|a|/3, \min(D,q_3)]$.
\end{enumerate}
  
  \item If $G$ is a field and $\{H_2(\Rips(X,r),G)\}_{r>0}$ is $q$-tame, then the persistence $\{H_2(\Rips(X,r),G)\}_{r<D}$ contains $G_{(|\alpha|/3, w'/3 )}$ as a direct summand for some $w\in (|\alpha|/3, \min(D,q_3))$. 
  %Furthermore, if  $\alpha$ is homotopic to some shorter loop $\beta$  in $X$, then $w'$ is at most the homotopy height between $\alpha$ and $\beta$. 
\end{enumerate}
\end{Thm}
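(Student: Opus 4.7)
The plan is to construct, for each $r \in (|\alpha|/3, D]$, a concrete 2-cycle $Q_r$ in $\Rips(X, r)$, and then extract conclusions (1) and (2) via the Mayer-Vietoris machinery of Section \ref{SectSec} and a standard persistence decomposition. The guiding intuition is that since $\alpha$ fails to be a member of a shortest homology basis---as encoded by $[\alpha] = \sum c_i [\beta_i]$ with $|\beta_i| \le |\alpha|$---a fine sample $L_\alpha$ admits two essentially distinct nullhomologies at any scale $r$ just above $|\alpha|/3$: a local one inside $\Rips(\alpha, r)$ from Corollary \ref{CorPre1}, and a global one routed through the $\beta_i$'s. Their difference is the 2-cycle we seek.

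For the construction, fix $r$-samples $L_\alpha, L_{\beta_1}, \ldots, L_{\beta_k}$. Since $r > |\alpha|/3 \ge |\beta_i|/3$, Corollary \ref{CorPre1} yields local fillings $\tau \in C_2(\Rips(\alpha, r), G)$ and $\zeta_i \in C_2(\Rips(\beta_i, r), G)$ with $\partial \tau = L_\alpha$ and $\partial \zeta_i = L_{\beta_i}$. The hypothesis $[\alpha] = \sum c_i [\beta_i]$ in $H_1(X, G)$, translated to the Rips level via the $r$-sample formalism of \cite{ZV}, yields $\eta \in C_2(\Rips(X, r), G)$ with $\partial \eta = L_\alpha - \sum c_i L_{\beta_i}$. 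Set $Q_r = [\tau - \eta - \sum c_i \zeta_i]$; this chain is a 2-cycle, so the class is well-defined. Property (a) is obtained by making all these choices coherent in $r$, absorbing discrepancies into 3-boundaries using the uniqueness-up-to-2-boundary statement in Corollary \ref{CorPre1} together with $H_2(\Rips(\alpha, r)) = H_2(\Rips(\beta_i, r)) = 0$ for $r > |\alpha|/3$ supplied by Theorem \ref{ThmAA}.

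Non-triviality of $Q_r$ and the sharp birth time in (b) are both proved by Mayer-Vietoris applied to $\Rips(X, r) = A \cup B$ with $A = \Rips(N_2, r)$ and $B = \Rips(X \setminus \Int N_1, r)$. The $DC(D, 3D/2)$-isolation hypothesis ensures $N_2 \supset N(N_1, r)$ for $r \le 3D/2$, so every simplex of $\Rips(X, r)$ lies in $A$ or $B$, and Proposition \ref{PropRips15} identifies $A \simeq \Rips(\alpha, r)$ and $A \cap B \simeq \Rips(\alpha_1, r) \sqcup \Rips(\alpha_2, r) \simeq S^1 \sqcup S^1$. Splitting a representative of $Q_r$ into $A$- and $B$-parts and tracking the common boundary---using $\alpha \sim \alpha_1$ in $N_2$---shows that the Mayer-Vietoris boundary $\partial_{MV}(Q_r) \in H_1(A \cap B) \cong G^2$ is a fixed non-zero class such as $[\alpha_1]$. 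For $r > |\alpha|/3$, Theorem \ref{ThmAA} gives $H_1(A) = 0$, so $[\alpha_1]$ lies in the kernel of the MV map $\psi \colon H_1(A \cap B) \to H_1(A) \oplus H_1(B)$, confirming $Q_r \ne 0$. At any $r_0 \le |\alpha|/3$ instead $H_1(A_{r_0}) \cong G$ is generated by $[\alpha]$, so $[\alpha_1]$ has non-zero $A$-projection under $\psi_{r_0}$ and thus lies outside the image of $\partial_{MV}$ at scale $r_0$; combined with the bonding isomorphism on $H_1(A \cap B)$ from Definition \ref{DefN1}(4) and naturality of MV, this rules out $Q_r$ being in the image of $i^G_{r_0, r}$.

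For (c), the height bound discretizes the homotopy into a 2-chain $\Sigma$ in $\Rips(X, q_3)$ with $\partial \Sigma = L_\alpha - L_\beta$, while Corollary \ref{CorPre1} provides $\tau^\beta$ with $\partial \tau^\beta = L_\beta$ (available since $q_3 > |\beta|/3$). A prism-type 3-chain built from $\tau$, $\tau^\beta$, and the discretized homotopy---whose vertical slices are local fillings of intermediate loops $H_t$, available because $H_2(\Rips(H_t, q_3)) = 0$ by Theorem \ref{ThmAA}---has boundary identifying $\tau - \Sigma - \tau^\beta$ with zero in $H_2(\Rips(X, q_3))$. Using $[\beta] = \sum c_i [\beta_i]$ to rewrite $\eta$ and combining with an analogous $\beta$-version of the 2-cycle (which also vanishes by the parallel argument), this forces $i^G_{q, q_3}(Q_q) = 0$. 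Finally, (2) follows from (1) under q-tameness via the standard interval decomposition of persistence modules: $\{Q_r\}$ generates an interval summand with birth exactly $|\alpha|/3$ (open, by (b)) and death at most some $w < \min(D, q_3)$ (by (c)), yielding the summand $G_{(|\alpha|/3, w'/3)}$. The main obstacle is the prism construction in (c), which requires a coherent discretization of the homotopy and careful matching of local fillings at intermediate times; the precise chain-level computation of $\partial_{MV}(Q_r)$ in (b) is a secondary technical challenge.
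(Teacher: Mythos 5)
Your proposal follows essentially the same route as the paper's proof: construct $Q_r$ as the difference between a local Rips-level nullhomology of $L_\alpha$ (via Lemma~\ref{LemPre1}/Corollary~\ref{CorPre1}) and the ``global'' filling routed through the $\beta_i$, then distinguish scales via the Mayer--Vietoris connecting map into $H_1(A\cap B)\cong G\oplus G$, and kill $Q_q$ at scale $q_3$ by discretizing the homotopy into a prism. The only slips are cosmetic: you should not invoke Theorem~\ref{ThmAA} for $H_2(\Rips(H_t,q_3))=0$ since $H_t$ is generally not a geodesic circle (the correct tool is Lemma~\ref{LemPre1}, which handles arbitrary rectifiable loops), and $\partial_{MV}(Q_r)$ need not be literally $[\alpha_1]$---what the paper actually establishes, and what your exactness argument needs, is that its image under $H_1(A\cap B)\to H_1(A_{q_0})$ is nonzero, which follows from a winding-number count rather than from naming the class.
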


\begin{proof}
\textbf{Proof of (1)}.
We will be using the singular homology representation in $X$: there exist singular $2$-simplices $\tilde\Delta_{\tilde j}$ and $\tilde h_{\tilde j}, g_i\in G$ in $X$ so that 
$$
[\alpha]_G = \sum_{i=1}^k g_i [\beta_i]_G + \di \sum_{\tilde j=1}^{\tilde k'} \tilde h_{\tilde j} [\tilde\Delta_{\tilde j}]_G.
$$

We now subdivide  singular $2$-simplices $\tilde\Delta_{\tilde j}$ into $\Delta_j$ so that for some $h_j\in G$
 $$
L_\alpha = \sum_{i=1}^k g_i L_i + \di \sum_{j=1}^{k'} h_j \Delta_j
$$
holds with the following conditions:
\begin{itemize}
 \item the diameter of each singular simplex $\Delta_j$ is less than $|\alpha|/3$;
 \item $L_\alpha$ and $L_i$ are subdivided loops $\alpha$ and $\beta_i$ correspondingly, with their vertices forming $(|\alpha|/3)$-samples of $\alpha$ and $\beta_i$ correspondingly;
 \item each $(|\alpha|/3)$-sample above contains three equidistant points on the corresponding loop (this will allow us to apply Lemma \ref{LemPre1} below).
\end{itemize}

Using the condition on the diameter of simplices we may abuse the notation and consider $L_\alpha$ and $L_i$ to be either subdivided singular loops in $X$ (in which case they correspond to $\alpha$  and $\beta_i$) or, by retaining the vertices, $(|\alpha|/3)$-loops in $\Rips(X, |\alpha|/3)$. The same goes for each $\Delta_j$: its three vertices form a simplex in $\Rips(X, |\alpha|/3)$, which we also denote by $\Delta_j$. 

Fix $r\in (|\alpha|/3, D]$. Using Lemma \ref{LemPre1} we can express each $L_i$ as  a chain $L_i=\di \sum_{p=1}^{k_p} \tau_{i,p}$, where each $\tau_{i,p}$ is a $2$-simplex in $\Rips(\beta_i, r)$. Hence we obtain an expression of a chain in $\Rips(X, r)$: 
$$
L_\alpha = \di \sum_{i=1}^k  \sum_{p=1}^{k_p} g_{i}\tau_{i,p} + \di \sum_{j=1}^{k'} h_j \Delta_j
$$

 We use Lemma \ref{LemPre1}  once more to obtain an expression of a chain in $\Rips(\alpha, r): L_\alpha = \di \sum_{l=1}^{k_\alpha}  \sigma_l$, implying
\begin{equation}
\label{Eq7.1}
\di \sum_{l=1}^{k_\alpha}\sigma_l= \di \sum_{i=1}^k \sum_{p=1}^{k_p} g_{i}\tau_{i,p} + \di \sum_{j=1}^{k'} h_j \Delta_j.
\end{equation}
See Figure \ref{Fig2DFootprint} for a demonstration of such an expression.

\begin{figure}[htbp]
\begin{center}
\includegraphics[scale=.6]{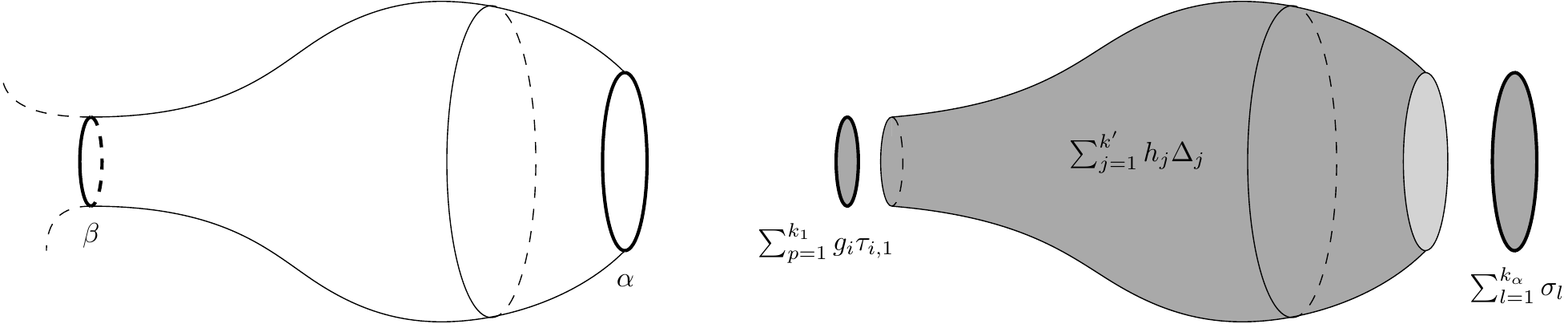}
\caption{An explanation of Equation \ref{Eq7.1}. The left side of the figure represents a portion of a surface containing a geodesic circle $\alpha$ and a shorter homotopic geodesic circle $\beta$. The right side demonstrates the geometric objects, whose triangulations are parts of Equation \ref{Eq7.1}. Simplices $\Delta_j$ triangulate the portion of the surface between the loops while simplices $\sigma_l$ and $\tau_{i,1}$ triangulate the ``lids'' arising from nullhomotopies of $\alpha$ and $\beta$ in the Rips complex (as described in Lemma \ref{LemPre1}), turning the total expression into a $2$-cycle.}
\label{Fig2DFootprint}
\end{center}
\end{figure}

This means that 
$$
C_r=\sum_{l=1}^{k_\alpha} \sigma_l - \sum_{i=1}^k  \sum_{p=1}^{k_p} g_{i}\tau_{i,p} -  \sum_{j=1}^{k'} h_j \Delta_j
$$
is a cycle and we define $Q_r$ to be the homology class represented by $C_r$. By standard arguments we see that a subdivision of singular $2$-simplices $\Delta_j$ above results in a different (finer) representation $C_r$, but does not change $Q_r$ (a subdivision replacing each $\Delta_j$ by the corresponding homologous sum of contained triangles would induce finer samples $L_\alpha$ and $L_i$ in the expression of $ \di \sum_{j=1}^{k'} h_j \Delta_j$).  These finer samples are equivalent to the original choice by \cite[Proposition 3.2(4)]{ZV}, and so are their nullhomologies by Lemma \ref{LemPre1}.

We next prove that $Q_r\in H_2(\Rips(X,r),G)$ is non-trivial. 
Consider the following excerpt of the Mayer-Vietoris sequence set up in Theorem \ref{ThmMain1}:
\begin{align}
\label{Eq01}
H_2(\Rips(X,r))\to H_1(A\cap B). 
\end{align}
We will be making use of the notation set up by the proof of Theorem \ref{ThmMain1} and Definition \ref{DefN1}. 
In order to prove $Q_r$ is non-trivial, we will show that its image via the boundary map of Equation \ref{Eq01} is non-trivial.

We first decompose $C_r$ into two parts $C_r = L_A + L_B$ as follows. $L_A$ consists of  $\sum_{l=1}^{k_\alpha} \sigma_l$ plus $\widetilde L_A$.  $\widetilde L_A$ consists of all the summands $-h_j \Delta_j$, for which the singular simplex in $X$ corresponding to $\Delta_j$  intersects $N_1$. $L_B$ consists of all other summands of  $C_r$. Observe that $L_A$ is contained in $A$, and $L_B$ is contained in $B$ as $D>r$. 
We will also be interested in the boundary of $L_A$. 
The boundary of $\sum_{l=1}^{k_\alpha}\sigma_l$ is $L_\alpha$. The boundary of $\widetilde L_A$ is null-homologous in $A$ (recall it arises from $\alpha$ being homologous to a combination of shorter loops in $X$ and each involved singular $2$-simplex $\Delta_j$ is contractible in the tube $N_2$ as it is of diameter less than $|\alpha|/3$, hence so is its counterpart in $A$ by  \cite[Proposition 3.2(7)]{ZV}). Putting them together we see that the boundary of $L_A$ has winding number $\pm 1$ (the sign being dependent on the chosen orientation) in $A$. This boundary represents the sum of the components (in $H_1(A \cap B)=G \oplus G$) of the image of $Q_r$ via the boundary map of Equation \ref{Eq01}, hence $Q_r$ is non-trivial.

\textbf{Proof of statement (a)}.

The statement holds as:
\begin{itemize}
\item   parts $\di \sum_{j=1}^{k'} h_j \Delta_j$ of $C_{q_1}$ and $C_{q_2}$ are the same;
\item the remaining parts (nullhomologies of loops $L_\alpha$ via Lemma \ref{LemPre1}) can also be chosen to be the same, and are in general homologous by Lemma \ref{LemPre1}.
\end{itemize}

\textbf{Proof of statement (b)}.

Consider the following diagram:
\begin{align}
\label{Dia2}
\xymatrix{
H_2(\Rips(X,q)) \ar[r] & H_1(\Rips(N_2,q)\cap \Rips(X \setminus \Int(N_1),q)) \\
H_2(\Rips(X,q_0)) \ar[r]^-h \ar[u]
& H_1(\Rips(N_2,q_0)\cap \Rips(X \setminus \Int(N_1),q_0)),\ar[u]
}
\end{align}
where the horizontal maps are excerpts from the Mayer-Vietoris sequence set up in Theorem \ref{ThmMain1} and the vertical maps are inclusion induced. It is easy to verify that the diagram commutes. As was mentioned before, $\Rips(N_2,q)\cap \Rips(X \setminus \Int(N_1),q)$ and $\Rips(N_2,q_0)\cap \Rips(X \setminus \Int(N_1),q_0)$ are both homotopy equivalent to a disjoint union of two copies of $S^1$. The difference we are about to utilise is that while $q_0$-samples
 of $\alpha_i$ are not contractible in $\Rips(N_2,q_0)$, they are contractible in $\Rips(N_2,q)$. Hence, for example, a $q_0$-sample of $\alpha_1$ may appear as a boundary of a $2$-chain in $\Rips(N_2,q)$, a fact we used earlier in the proof, but not in $\Rips(N_2,q_0)$, which we will use here. 
 
Suppose $K_{q_0}$ is a $2$-chain in $\Rips(X,q_0)$. Keeping in mind the diagram above, we decompose $K_{q_0}=K_{q_0}^1 + K_{q_0}^2$, with $K_{q_0}^1$ being a chain in $\Rips(N_2, q_0)$ and $K_{q_0}^2$ being a chain in $\Rips(X \setminus \Int(N_1),q_0)$, in the following way.
 Let $K_{q_0}^1$ consist of all summands $g \Delta$ of $K_{q_0}$ with $g\in G, \Delta \in \Rips(X, q_0)$, for which at least one vertex of $\Delta$ is contained in $N_1.$ Take the three vertices of such $\Delta$ and connect them pairwise by geodesics to obtain a triangle $S$. Since $3D/2 > 3 q_0 /2$ and at least one vertex is in $N_1$, the obtained triangle is contained in $N_2$. Furthermore, as the circumference of $S$ is less than $|\alpha|$ and $N_2 \DC \alpha$, we conclude that $S$ is contractible in $N_2$. This easily implies (for an argument see for example \cite[Proposition 3.2, (7)]{ZV}) that $\di \Delta$ is contractible in $\Rips(N_2, q_0)$ hence the entire boundary of $K_{q_0}^1$ is nullhomologous in $H_1(\Rips(N_2, q_0))$. This boundary represents the sum of the components (in $H_1(A \cap B)=G \oplus G$) of the image of $[K_{q_0}]_G\in H_2(\Rips(X,q_0))$ via the lower horizontal map $h$ of the diagram (\ref{Dia2}) above. 
 While its image in 
 $$
 H_1(\Rips(N_2,q_0)\cap \Rips(X \setminus \Int(N_1),q_0)) \cong G \oplus G
 $$
 may be non-trivial, the two components of $h([K_{q_0}]_G)$ have, up to the sign depending on the chosen orientation, the same winding number, as they have to add up to $0$. This contrasts the property of $Q_r$ above, where we proved that the difference between the corresponding winding numbers is $\pm 1$. Hence $Q_r$ can't be an image of any element of $H_2(\Rips(X,q_0))$ via the natural inclusion induced map, which proves (b).
 
\textbf{ Proof of statement (c)}. 

Assume there is a homotopy $H \colon I \times I \to X$ (we will use the notation $H_t=H|_{\{t\}\times I}$) realizing the homotopy height $w$ between $\alpha$ and $\beta$:
\begin{itemize}
  \item $H_0=\alpha$;
  \item $H_1=\beta$;
  \item $H(t,1)=H(t,0), \forall t\in I$;
  \item $|H_t(I)| \leq w, \forall t\in I$.
  \end{itemize}
  If the homotopy height $w$ can't be realized precisely, we choose  $\tilde w \in (w, q_3)$ and use  a similar homotopy $H$ with the condition $|H_t(I)| \leq w  $ being replaced by $|H_t(I)| \leq \tilde w, \forall t\in I$.  In this case the argument below also provides the same conclusion \textbf{(c)}. For the sake of simplicity we may now assume that the homotopy height is realized precisely by $H$.

\begin{figure}[htbp]
\begin{center}
\includegraphics[scale=.7]{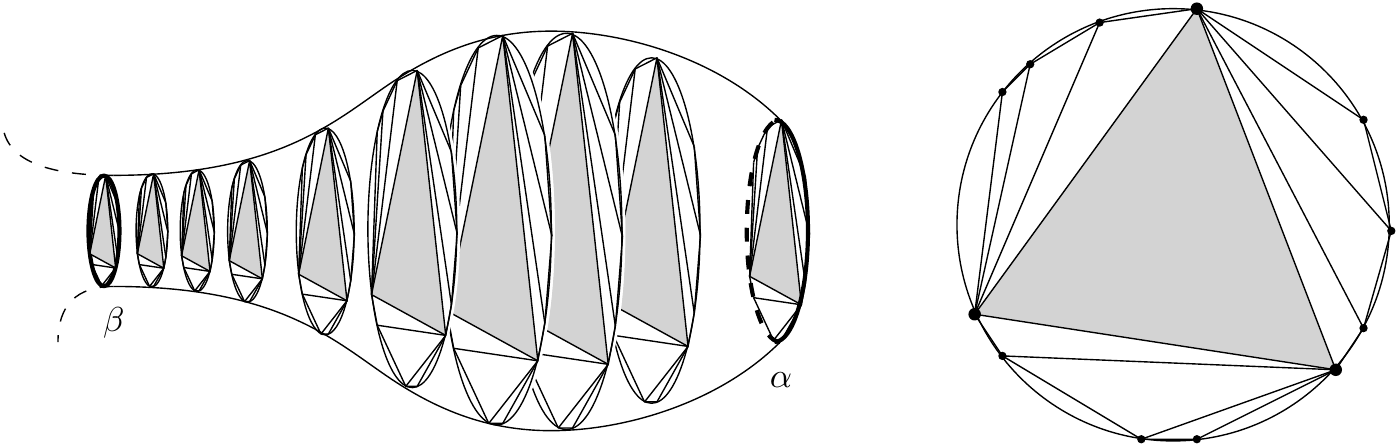}
\caption{Idea of proof of part (c) of Theorem \ref{ThmMain2}. The tubular cycle described by Figure \ref{Fig2DFootprint} (on the left) is filled by inserting parallel copies of nullhomotopies provided by Lemma \ref{LemPre1} (on the right).}
\label{Fig2DFootprint2}
\end{center}
\end{figure}
 
 Define $\eps=q_3 /3-w>0$. Subdivide $I \times I$ into a  grid of size $M \times M$ for some even $M\in \NN$, consisting of squares $S_{m,n}=[\frac{m}{M}, \frac{m+1}{M}]\times [\frac{n}{M}, \frac{n+1}{M}]$, so that the diameter of each $H(S_{m,n})$ is less than $\frac{\eps}{20}$. In order to obtain a proper triangulation $T$ of $I \times I$ add diagonals to squares $S_{m,n}$: if $m$ is odd add a diagonal from $(\frac{m}{M},\frac{n}{M})$ to $(\frac{m+1}{M}, \frac{n+1}{M})$; if $m$ is even add a diagonal from $(\frac{m}{M},\frac{n+1}{M})$ to $(\frac{m+1}{M}, \frac{n}{M})$, for all $n$ (see the right side of Figure \ref{FigNullhomotopy1} for an excerpt). This way we subdivide each square $S_{m,n}$ into two triangles: the upper triangle $T^u_{m,n}$ and the lower triangle $T^l_{m,n}$. By the standard argument about equivalence of singular and simplicial homology, we may now replace $\sum_{j=1}^{k'} h_j \Delta_j$ in the expression of $Q_r$ by $\sum_{m,n=1}^M (H(T^u_{m,n})+H(T^l_{m,n}))$. Furthermore, we replace $\sum_{i=1}^k  \sum_{p=1}^{k_p} g'_{i,p}\tau_{i,p}$ by $ \sum_{p=1}^{k_\beta} \tau_{p}$ according to Lemma \ref{LemPre1}, and thus obtain another representative of $Q_r$ in $\Rips(X,r)$: 
$$
L=\sum_{l=1}^{k_\alpha} \sigma_l  - \sum_{p=1}^{k_\beta} \tau_{p} - \sum_{m,n=1}^M (H(T^u_{m,n})+H(T^l_{m,n})).
$$
Note that again we abuse the notation in the sense that we think of $H(T^u_{m,n})$ as a triangle in $\Rips(X,r)$ determined by the images in $X$ of the three vertices of $T^u_{m,n}$  by $H$.

This essentially determines a map from the boundary of the closed cylinder to $\Rips(X,r)$, with homotopy $H$ representing the map on $I \times S^1$ and the nullhomotopies of $\alpha$ and $\beta$ representing the lids. The idea now is to use the fact that for each $t\in I$, the $\frac{\eps}{20}$-loop induced by $T$ on $H_t$ is contractible in $\Rips(X,q_3)$, and thus extend $H$ to the whole solid cylinder.

Fix  $m\in\{0,2,4,\ldots, M-2\}$. Set $t_0=t_3=0$. Let $L^m$ and $L^{m+2}$ denote $\eps/20$-loops obtained by restricting $H_m$ and $H_{m+2}$ respectively to the vertices of $T$. We first choose approximate equidistant points on $H_m(I)$ to facilitate the use of Lemma \ref{LemPre1}, as was done in the proof of Corollary \ref{CorPre1}. 
\begin{itemize}
 \item Let $t_1$ be the smallest integer in $\{1,2,3,\ldots, M\}$ so that $|H_{m}([0,t_1/M])|> w/3$; or $t_1=M$, if the condition is not satisfied.
 \item  Let $t_2$ be the smallest integer in $\{2,3,\ldots, M\}$ so that $|H_{m}([t_1/M,t_2/M])|> w/3$; or $t_2=M$, if the condition is not satisfied.
\end{itemize}
 Note that for each $i , |H_m([t_i/M,t_{i+1}/M])| < w/3 + \eps/20$. By the bound on the size of triangles in $T$ we also have $\diam(H_{m+2}([t_i/M,t_{i+1}/M]))< w/3 + \eps/20 + 2\eps/20=w/3 + 3 \eps/20< q_3/3$. We now apply Lemma \ref{LemPre1} for $L^m$ and $L^{m+2}$ using the decomposition into three intervals $H_{m}( [t_i, t_{i+1}]), i\in\{0,1,2\}$ and $H_{m+2}( [t_i, t_{i+1}]), i\in\{0,1,2\}$, to get a single triangulation $T_m$ of an $M$-gon as in Figure \ref{FigNullhomotopy1}, so that $H_m$ and $H_{m+2}$ applied to the vertices of $T_m$ represent a nullhomotopy of $L^m$ and $L^{m+2}$ in $\Rips(X,w)$. 
When using triangulations $T_m$ we will generally be considering  the corresponding triangles (i.e., maximal simplices) as a decomposition of an $M$-gon.

Consider a cylinder, obtained by identifying points $(x,0)\sim (x,1)$ in $[M, M+2]\times I$.  Let $T_m^\mathcal{L}$ denote a copy of $T_m$ attached along the domain of $H_m$ in such a cylinder, and let $T_m^\mathcal{R}$ denote a copy of $T_m$ attached along the domain of $H_{m+2}$. Form a triangulation of a thin cylinder
$$
U_m = U_m^\mathcal{L} \cup U_m^\mathcal{R},
$$
where 
$$
U_m^\mathcal{L} =  \{T^u_{m,n} \}_{n=1}^M \cup \{T^l_{m,n}\}_{n=1}^M \cup T_m^\mathcal{L}
$$
and
$$
U_m^\mathcal{R} = \{T^u_{m+1,n}\}_{n=1}^M \cup \{T^l_{m+1,n}\}_{n=1}^M \cup T_m^\mathcal{R}.
$$
Because of alternating diagonals in the construction of $T$, $U_m^\mathcal{L}$ and $U_m^\mathcal{R}$ are isomorphic triangulations of a disc glued together along their boundaries corresponding to $\{m+1\}\times I\subset I\times I$. Applying $H$ to the vertices of $U_m$ we thus obtain two maps, which are contiguous in $\Rips(X,q_3)$ as $T$ is fine enough. Thus $\sum_{\sigma\in U_m} H(\sigma)$ is homologically trivial in $\Rips(X,q_3)$. 

For $m\in\{2,4,\ldots, M-2\}$ define also a triangulation (simplicial complex) $V_m=T_m^\mathcal{L} \cup T_{m-2}^\mathcal{R}$. By Lemma \ref{LemPre1} the chain $\sum_{\sigma\in V_m}H(\sigma)$ is homologically trivial in $\Rips(X,q_3)$.

% !TEX root = main.tex
\begin{figure}
\begin{tikzpicture}[scale=1]

\node (q1) at (60:1.9){}; 

\node (q2) at (180: 1.9){};

\node (q3) at (-60:1.9){};

\draw[top color=gray,bottom color=white, fill opacity =1] (60:1.9) -- (180:1.9) -- (-60:1.9) -- cycle;
	%%%%%%%%%%%%%5
\foreach \x  in {1,2,3,5,6,7,9,10,11}
	\draw (-\x * 30+90:1.9) -- (-\x * 30+90:2.0) (-\x * 30+60:2.4) node {$x_{\x }$};
\foreach \x  in {0}
	\draw (-\x * 30+90:1.9) -- (-\x * 30+90:2.0) (-\x * 30+60:2.4) node {$x_{\x }=x_{t_0}$};
\foreach \x  in {4}
	\draw (-\x * 30+90:1.9) -- (-\x * 30+90:2.0) (-\x * 30+60:2.4) node {$x_{\x }=x_{t_1}$};
\foreach \x  in {8}
	\draw (-\x * 30+90:1.9) -- (-\x * 30+90:2.0) (-\x * 30+60:2.7) node {$x_{\x }=x_{t_2}$};
	%%%%%%%%%%%%%5
\foreach \x  in {0, 1, ..., 11}
	\draw [fill=black] (-\x * 30+90:1.9) circle (.1cm);
\foreach \x  in {0, 1, ..., 11}
	\draw (-\x * 30+90:1.9) -- (-\x * 30+120:1.9);

%\draw[top color=gray,bottom color=white, fill opacity =1] (1,0) -- (-.5, .7) -- (-.5,-.7) -- cycle;

\foreach \x  in { 2, 3,..., 6}
	\draw (q1) -- (-\x * 30+120:1.9);
\foreach \x  in { 6,7, ..., 10}
	\draw (q3) -- (-\x * 30+120:1.9);
\foreach \x  in {10, 11, 12, 1, 2}
	\draw (q2) -- (-\x * 30+120:1.9);
	%%%%%%%%%%%%%%%%%%right side
\draw (4.5, -3) to (4.5, 3);
\foreach \x  in {0,1,2,...,11}
	\draw (4,\x/2 -3) to (5, \x/2 -3) to (5, \x/2 -3 + .5) to (4,\x/2 -3+ .5) to cycle;
\foreach \x  in {0,1,2,...,11}
	\draw (4,\x/2 -3) to (4.5, \x/2 -3 + .5) to (5, \x/2 -3);
\foreach \x  in {0,1, ..., 11}
	\draw [fill=black] (4,\x/2 -3) circle (.1cm) node[left] {$x_{\x}$};
\foreach \x  in {0,1, ...,11}
	\draw [fill=black] (5,\x/2 -3) circle (.1cm) node[right] {$x_{\x}$};
	\foreach \x  in {12}
	\draw [fill=black] (4,\x/2 -3) circle (.1cm) node[left] {$x_{0}$};
\foreach \x  in {12}
	\draw [fill=black] (5,\x/2 -3) circle (.1cm) node[right] {$x_{0}$};

%%%%%%%%%%%%%
\end{tikzpicture}
\caption{Sketch of proof of part (c) of Theorem \ref{ThmMain2}. Triangulation $U_m$ of a cylinder is obtained by taking a vetrical strip of triangles of $T$ (the right part) and attaching a copy of a triangulation $T_r$ of a disc (shown on left) on each slice, so that the labels of the vertices coincide.}
\label{FigNullhomotopy1}
\end{figure}
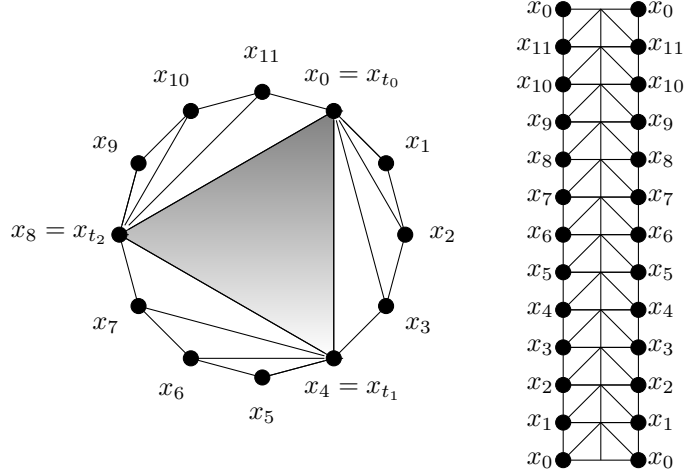

Observe now that the equality of chains:
$$
L=\sum_{m\in\{0,2,4,\ldots, M-2\}} \sum_{\sigma \in U_m} H(\sigma) - \sum_{m\in\{2,4,\ldots, M-2\}} \sum_{\sigma \in V_m} H(\sigma).
$$
Since each involved summand $\sum_{\sigma \in U_m} H(\sigma)$ and $\sum_{\sigma \in V_m} H(\sigma)$  is trivial in $\Rips(X,q_3)$, so is $L$.

 This concludes the proof of (1).
Statement \textbf{(2)} follows directly from (1).
\end{proof}

The following proposition explains the condition of Theorem \ref{ThmMain2} requiring loops being disjoint with $N_1$. It essentially excludes the case where $\alpha$ would have a small cylindric neighborhood, meaning that there would be loops of length $|\alpha|$ arbitrarily close to and homotopic to $\alpha$. Technically speaking, any such loop is an expression of $\alpha$ by a loop of length at most $\alpha$. However, in such a case the two-dimensional homology class constructed in Theorem \ref{ThmMain2} could be trivial. In order to avoid such a situation we require an empty intersection with $N_1$. The following proposition explains that this is indeed the case in a generic situation.

\begin{Prop}
 \label{PropCons}
Suppose $X$ is a geodesic surface, $\alpha$ is an $DC(D_1, D_2)$ isolated loop for some $D_1 > 0, D_2 \geq |\alpha|/2$, and $G$ is a group.  Assume $\alpha$ is homologous in $H_1(X,G)$ to a finite $G$-combination of  loops $\beta_i$ of lengths at most $|\alpha|$, none of which are equal to $\alpha$ (this holds, for example, if $\alpha$ is not a member of any lexicographically shortest homology basis). Then $\beta_i$ can be chosen to be geodesic circles. If all loops $\beta_i$ are shorter than $\alpha$ or $\alpha$ is $SDC(D_1, D_2)$ isolated, then  none of loops $\beta_i$ intersect $N_1$.
\end{Prop}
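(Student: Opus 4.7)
The plan is to prove the two assertions separately. For the first assertion (replacing the $\beta_i$ by geodesic circles), I would expand each $[\beta_i]$ in a lexicographically minimal basis of geodesic circles $a_1,\ldots,a_k$ (with $|a_1|\le\cdots\le|a_k|$) from the preliminaries, obtaining $[\alpha]=\sum_j d_j[a_j]$. The central claim is that $d_j\ne 0$ implies $|a_j|\le|\alpha|$. If not, let $j^*$ be the largest index with $|a_{j^*}|>|\alpha|$ and $d_{j^*}\ne 0$; using invertibility of $d_{j^*}$ in the field $G$, solve for $[a_{j^*}]$ and swap $a_{j^*}$ for $\alpha$ in the basis. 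After reordering by length, the new collection still satisfies conditions (1) and (2) in the definition of the lex-minimal basis but has strictly smaller length vector, contradicting minimality of the original basis. The resulting expression realises $[\alpha]$ as a $G$-combination of geodesic circles of length at most $|\alpha|$.

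For the second assertion, assume for contradiction that some chosen geodesic circle $\beta_i$ satisfies $\beta_i\cap N_1\ne\emptyset$, and split into two cases. If $\beta_i\subset N_2$, Lemma~\ref{Lem2021Add1} combined with Definition~\ref{DefN1}(5) furnishes a deformation contraction $G\colon N_2\times[0,1]\to N_2$ onto $\alpha$; by Definition~\ref{DefDC}(2), lengths of curves are non-increasing under $G_t$. Writing $G_1=G(\cdot,1)$, the loop $G_1(\beta_i)\subset\alpha$ has some winding number $k$ around $\alpha$, so $|\beta_i|\ge|k|\cdot|\alpha|$. Discarding null-homologous summands, one may assume $k\ne 0$; combined with $|\beta_i|\le|\alpha|$ this forces $|k|=1$ and $|\beta_i|=|\alpha|$. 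Under the hypothesis $|\beta_i|<|\alpha|$ this is an immediate contradiction. Under SDC, $G_1\colon\beta_i\to\alpha$ becomes a $1$-Lipschitz, degree-$\pm1$ map between circles of equal circumference, hence an isometry; picking $x\in\beta_i\setminus\alpha$ and any $y\in\beta_i\setminus\{x\}$, the strict SDC inequality $d(G_1 x,G_1 y)<d(x,y)$ contradicts this isometry, forcing $\beta_i=\alpha$ and the desired contradiction.

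If instead $\beta_i\not\subset N_2$, choose $p\in\beta_i\cap N_1$ and $q\in\beta_i\setminus N_2$. The nested-neighbourhood condition $N_2\supset N(N_1,D_2)$ yields $d(p,q)\ge D_2$, so each of the two $\beta_i$-arcs from $p$ to $q$ has length at least $D_2$, giving $|\beta_i|\ge 2D_2\ge|\alpha|$. Under $|\beta_i|<|\alpha|$ this is an outright contradiction; under SDC with $D_2>|\alpha|/2$, the strict inequality $|\beta_i|>|\alpha|$ again contradicts $|\beta_i|\le|\alpha|$. The main anticipated obstacle is the borderline SDC case $D_2=|\alpha|/2$ with $|\beta_i|=|\alpha|$, where equality throughout forces both arcs of $\beta_i$ to be minimal geodesics joining $p$ to an antipodal $q$; ruling out this tight configuration requires a finer argument combining its geometric rigidity (two minimal geodesics traversing the annulus $N_2\setminus N_1$) with the strict-decrease property of SDC applied to the $N_1$-portion of $\beta_i$.
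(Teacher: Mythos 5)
Your proof follows the same core strategy as the paper's for the second assertion: localise to the tube $N_2$, use that $\alpha$ is the shortest generator of $H_1(N_2,G)$, and leverage strictness of the SDC to shorten an offending $\beta_i$. However, the case split you introduce is unnecessary, and the ``borderline $D_2=|\alpha|/2$, $|\beta_i|=|\alpha|$'' obstacle you flag as requiring a finer argument never actually arises. If $\beta_i$ meets $N_1$ at a point $p$ and $|\beta_i|\le|\alpha|$, then every $q\in\beta_i$ lies on an arc from $p$ of length at most $|\beta_i|/2\le|\alpha|/2\le D_2$, so $d(p,q)\le D_2$ and $\beta_i\subset\cN(N_1,D_2)$. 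Since $X$ is geodesic, $\cN(N_1,D_2)=\overline{N(N_1,D_2)}$, and this closure sits inside the closed set $N_2$ because $N(N_1,D_2)\subset N_2$ by Definition~\ref{DefN1}(2). Hence $\beta_i\subset N_2$ always, the $\beta_i\not\subset N_2$ branch is empty, and the paper goes directly to the tube argument in a single paragraph.

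Within that tube argument, your winding-number and degree-$\pm1$ isometric-rigidity reasoning is a correct, somewhat more explicit version of the paper's terser remark that a strict deformation retraction makes $\beta_i$ strictly shorter than $|\alpha|$ (hence null-homologous in the tube). For the first assertion the paper simply cites Theorem~8.4 of \cite{ZV}; your explicit swap argument against the lexicographically minimal basis is a reasonable reproof, but note that it tacitly relies on invertibility of the coefficient $d_{j^{*}}$, which the lex-minimal basis construction in the preliminaries only provides when $G$ is a field.
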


\begin{proof}
 The first part is contained in Theorem 8.4 in \cite{ZV}. 
 
If some $\beta_i$ intersected $N_1$, then by the condition on $D_2$ it would be contained in $N_2$ (see Definition \ref{DefN1} for notation), which is homeomorphic to a tube. The shortest generator of $H_1(N_2, G)$  is $\alpha$. If $|\beta_i|< |\alpha|$ then $\beta_i$ is contractible and obsolete.  If $\alpha$ is $SDC(D_1, D_2)$ isolated and $\beta_i \neq \alpha$, then the strict deformation retraction of $\beta_i$ is shorter than $\alpha$ and homotopic to $\beta_i$, thus contractible, and $\beta_i$ is again obsolete.
 \end{proof}

%---------------------------------------------------------------------------
%%%%%%%%%%%%%%%%%%%%%%%%%%%%%
\section{Footprint detection using \v Cech complexes and closed filtrations}
\label{SectCech}

\subsection{\v Cech complexes}

An approach, equivalent to the one based on the Rips complexes in the previous sections, could also be developed for \v Cech complexes. Since the arguments are almost the same, we will only state the versions of required statements and the most general main results for the \v Cech complexes, and comment on the few modifications to the results.

\begin{Def}
 Suppose $A$ is a loop. Four points $x_0, x_1, x_2, x_3\in A$ are in a square-like formation on $A$, if for some chosen orientation on $A$, $x_0 \prec x_1 \prec x_2 \prec x_3 \prec x_0$ and the length segments of the closed intervals $[x_{0},x_{1}], [x_{1},x_{2}],[x_{2},x_{3}]$ and $[x_{3},x_{0}]$ along $A$ are the same.
\end{Def}

\begin{Lem}
\label{LemPre1C}
[A version of Lemma \ref{LemPre1}]
 Suppose $(A,d)$ is a loop of circumference $1$ equipped with a (not necessary geodesic) metric $d$, and choose $r> 1/4$. Let $L$ be an $r$-sample of $A$ given by $(x_0, x_1, \ldots, x_k, x_{k+1}=x_0)$. 
Suppose there exist $t_0<t_1<t_2<t_3$, so that $x_{t_0} \prec x_{t_1} \prec x_{t_2}\prec x_{t_3} \prec x_{t_0}$ with the length of segments of the closed intervals $[x_{t_0},x_{t_1}], [x_{t_1},x_{t_2}], [x_{t_2},x_{t_3}]$ and $[x_{t_3},x_{t_0}]$ along $A$ being less than $r$.

Then there exist pairwise different triangles $\sigma_i$ in $\C_L(L, r)$, so that:
\begin{itemize}
\item $L=\di \sum_{i=1}^k \sigma_i$, and
\item if $x_p\in L$ and $x_q\in L$ are contained in some $\sigma_i$, then the length of the shortest segment on $A$ between $x_p$ and $x_q$ is less than 1/4.
\end{itemize}
Furthermore, if $L=\di \sum_{j=1}^m \sigma'_j$ is another such decomposition, then $\sum_{i=1}^k \sigma_i-\sum_{j=1}^m \sigma_j$ is a boundary in $\C_A(A,r)$.
\end{Lem}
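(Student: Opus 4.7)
The plan is to mirror the proof of Lemma \ref{LemPre1}, replacing the central triple of apex points by the square-like quadruple $x_{t_0} \prec x_{t_1} \prec x_{t_2} \prec x_{t_3}$. This shift from three to four apex points reflects the fact that the critical parameter for the \v Cech complex of a circle is $1/4$ rather than $1/3$. Concretely, I would fill the ``central square'' with two $2$-simplices $(x_{t_0}, x_{t_1}, x_{t_2})$ and $(x_{t_0}, x_{t_2}, x_{t_3})$, and on each of the four arcs bounded by consecutive apex points I would attach a fan of triangles $(x_{t_l}, x_{p-1}, x_p)$ with apex $x_{t_l}$, exactly as in the Rips argument.

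The key new verification is that the two central triangles lie in $\C_L(L, r)$. For $(x_{t_0}, x_{t_1}, x_{t_2})$, I would take $x_{t_1}$ itself as the witness: since $d$ is bounded above by arc length, the hypothesis that each of the four arcs has length $<r$ yields $d(x_{t_1}, x_{t_0}) < r$ and $d(x_{t_1}, x_{t_2}) < r$. Symmetrically, $x_{t_3}$ witnesses $(x_{t_0}, x_{t_2}, x_{t_3})$. For each fan triangle all three vertices lie on a single arc of length $<r$, so $x_{t_l}$ serves as a witness. The boundary of the full $2$-chain then telescopes to $L$ by the same cancellation argument as in Lemma \ref{LemPre1}, and degenerate configurations (coinciding apex points, empty fan arcs) are handled by removing trivial simplices.

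For the uniqueness-up-to-boundary statement I would transfer to the induced geodesic space $(A', d')$. Since the identity $(A', d') \to (A, d)$ is $1$-Lipschitz, it induces an inclusion $\C(A', r) \hookrightarrow \C(A, r)$ under which the constructed triangles pull back with the same witnesses. By Theorem \ref{ThmAA} the second homology $H_2(\C(A', r))$ vanishes for all $r > 0$ (the complexes are odd-dimensional spheres or contractible), so the difference of two decompositions is a $2$-boundary in $\C(A', r)$ and therefore in $\C_A(A, r)$ as well.

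The main obstacle I anticipate concerns the second bullet point: the diagonal pair $(x_{t_0}, x_{t_2})$ in the central square can have shortest-arc distance approaching $1/2$, not $1/4$, so the arc-distance bound stated as ``less than $1/4$'' does not follow directly from the natural triangulation described above. Either the proof is intended to yield the looser bound ``less than $r$'' by analogy with Lemma \ref{LemPre1}, in which case the argument above produces it automatically, or else the sharper $1/4$ bound must be enforced by subdividing the central square further, e.g.\ by inserting auxiliary apex points close to the arc midpoints and triangulating accordingly, with witness analysis adapted in the same spirit.
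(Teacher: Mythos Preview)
Your construction coincides with the paper's: the proof there consists of a single sentence pointing to a figure, and that figure is exactly the square $x_{t_0},x_{t_1},x_{t_2},x_{t_3}$ split by the diagonal $x_{t_0}x_{t_2}$ into two triangles, together with a fan of triangles over each of the four arcs with apex the corresponding $x_{t_l}$. Your witness choices ($x_{t_1}$ for one central triangle, $x_{t_3}$ for the other, $x_{t_l}$ for the fans) are the natural ones and work. The transfer to $(A',d')$ and the appeal to $H_2(\C(A',r))=0$ via Theorem~\ref{ThmAA} is likewise identical to the Rips argument.

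Your concern about the second bullet is well founded, and it is not a defect of your argument but of the statement as printed. The diagonal pair $(x_{t_0},x_{t_2})$ has shortest-arc distance equal to the smaller of the two two-arc sums, which for $r$ close to $1/4$ is close to $1/2$; the paper's own figure exhibits exactly this diagonal, so the published construction does not meet the ``less than $1/4$'' bound either. By analogy with Lemma~\ref{LemPre1}, the intended bound is almost certainly ``less than $2r$'' (or simply the weaker condition that each $\sigma_i$ lies in $\C_{A'}(A',r)$, which is all that is used downstream). You should not attempt to rescue the $1/4$ bound by further subdivision: no triangulation whose $2$-chain bounds $L$ in $\C_L(L,r)$ can avoid an edge of arc length $\geq 1/4$ when the four arcs are each close to $1/4$, since some triangle must then contain vertices from non-adjacent arcs. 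Treat the ``$1/4$'' as a typo and proceed with the natural bound.
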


\begin{proof}
 The decomposition is given by Figure \ref{FigNullhomotopyC}.
 % !TEX root = main.tex
\begin{figure}
\begin{tikzpicture}[scale=1]

\node (q1) at (90:1.9){}; 

\node (q2) at (180: 1.9){};

\node (q3) at (-90:1.9){};
\node (q4) at (0:1.9){};

\draw[top color=gray,bottom color=white, fill opacity =1] (90:1.9) -- (180:1.9) -- (-90:1.9) -- cycle;
\draw[top color=gray,bottom color=white, fill opacity =1] (90:1.9) -- (0:1.9) -- (-90:1.9) -- cycle;
	%%%%%%%%%%%%%5
\foreach \x  in {0,1,3,4,6,7,9,10}
	\draw (-\x * 30+90:1.9) -- (-\x * 30+90:2.0) (-\x * 30+60:2.4) node {$x_{\x }$};
\foreach \x  in {2}
	\draw (-\x * 30+90:1.9) -- (-\x * 30+90:2.0) (-\x * 30+60:2.7) node {$x_{\x }=x_{t_0}$};
\foreach \x  in {5}
	\draw (-\x * 30+90:1.9) -- (-\x * 30+90:2.0) (-\x * 30+60:2.4) node {$x_{\x }=x_{t_1}$};
\foreach \x  in {8}
	\draw (-\x * 30+90:1.9) -- (-\x * 30+90:2.0) (-\x * 30+60:2.7) node {$x_{\x }=x_{t_2}$};
	\foreach \x  in {11}
	\draw (-\x * 30+90:1.9) -- (-\x * 30+90:2.0) (-\x * 30+60:2.4) node {$x_{\x }=x_{t_3}$};
	%%%%%%%%%%%%%5
\foreach \x  in {0, 1, ..., 11}
	\draw [fill=black] (-\x * 30+90:1.9) circle (.1cm);
\foreach \x  in {0, 1, ..., 11}
	\draw (-\x * 30+90:1.9) -- (-\x * 30+120:1.9);

%\draw[top color=gray,bottom color=white, fill opacity =1] (1,0) -- (-.5, .7) -- (-.5,-.7) -- cycle;

\foreach \x  in { 3}
	\draw (q1) -- (-\x * 30+120:1.9);
\foreach \x  in { 12}
	\draw (q2) -- (-\x * 30+120:1.9);
\foreach \x  in {9}
	\draw (q3) -- (-\x * 30+120:1.9);
\foreach \x  in {6}
	\draw (q4) -- (-\x * 30+120:1.9);

%%%%%%%%%%%%%
\end{tikzpicture}
\caption{Sketch of proof of Lemma \ref{LemPre1C}.}
\label{FigNullhomotopyC}
\end{figure}
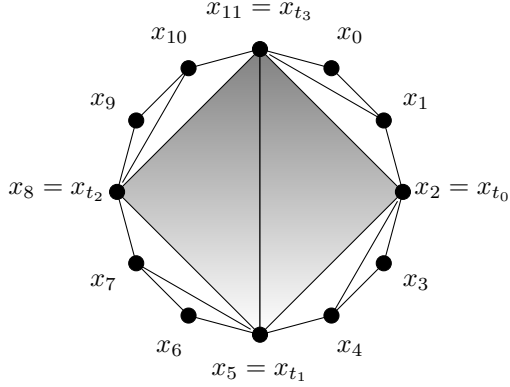
\end{proof}

\begin{Def}
 \label{DefN1C}
 [A version of Definition \ref{DefN1}]
Suppose $0 < D_1 \leq D_2$. Loop $\alpha \subset X$ is \v Cech $DC(D_1, D_2)$ isolated  if there exist two closed nested  neighborhoods $N_1 \subset N_2$ of $\alpha$, so that:
\begin{enumerate}
 \item $N_1$ and  $N_2$ are homeomorphic to $S^1 \times [0,1]$;
 \item $N_2 \supset N(N_1, D_2)$;
 \item $\di N_1$ consists of loops $\alpha_1$ and $\alpha_2$, which are at least $2 D_1$ apart from each other;
 \item $\C(\alpha_i)\simeq S^1, \forall i, \forall r<D_1$;
 \item $N_X(N_2, D_2) \setminus \Int(N_1) \DC \di N_1 $ and $N_X(N_1, D_2) \DC \alpha$.
\end{enumerate}
Loop $\alpha$ is \v Cech $DC(D)$ isolated, if it is \v Cech $DC(D,D)$ isolated. 
\end{Def}

\begin{Prop}
\label{PropRips1C}
[A version of Proposition \ref{PropRips1}]
 Suppose $D >0$ and loop $\alpha$ is \v Cech $DC(D)$ isolated. Then:
\begin{enumerate}
 \item $\C_X(N_2,D) \simeq \C_X(N_1,D) \simeq \C_X(\alpha, D)=\C_\alpha(\alpha, D)$, 
 \item $\C_X(N_2 \setminus \Int (N_1)) \simeq \C_X(\di  N_1)$, and
 \item $\C_X(\di  N_1) =\C_X(\alpha_1, D) \sqcup \C_X(\alpha_2, D)$.
\end{enumerate}
\end{Prop}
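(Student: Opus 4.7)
The plan is to mirror the (very brief) proof of Proposition \ref{PropRips1} step by step, replacing Proposition \ref{PropDC} with its \v Cech analog Proposition \ref{PropCDC} at each use and adapting Lemma \ref{Lem2021Add1} to the \v Cech setting. The identity $\C_X(\alpha,D)=\C_\alpha(\alpha,D)$ is immediate from the preliminary remark that for a geodesic circle $\alpha$ one has $\C_X(\alpha,r)=\C_\alpha(\alpha,r)$ for every $r>0$.

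For the remaining two equivalences in (1), I would apply Proposition \ref{PropCDC} twice. First, with $A=\alpha$, $B=N_1$, and $r=D$; the required DC $N_X(N_1,D)\DC \alpha$ follows from the DC $N_X(N_1,D_2)\DC\alpha$ supplied by Definition \ref{DefN1C}(5) together with $D\leq D_2$. Second, with $A=N_1$, $B=N_2$, and $r=D$; here I would first verify the \v Cech analog of Lemma \ref{Lem2021Add1}, namely $N_X(N_2,D)\DC N_1$. This is obtained by extending the given DC $N_X(N_2,D_2)\setminus \Int(N_1)\DC \di N_1$ by the identity on $N_1$: continuity across $\di N_1$ is automatic, and the non-expansion estimate reproduces the argument of Lemma \ref{Lem2021Add1} by taking a geodesic from a point outside $N_1$ to a point in $N_1$ and noting it must cross $\di N_1$.

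Part (2) is again meant to follow from Proposition \ref{PropCDC}, applied with $A=\di N_1$, $B=N_2\setminus \Int(N_1)$, and $r=D$. The required hypothesis is a DC of $N_X(N_2\setminus \Int(N_1),D)$ to $\di N_1$. This is where I expect the main obstacle: Definition \ref{DefN1C}(5) furnishes a DC only on the outer shell $N_X(N_2,D_2)\setminus \Int(N_1)$, while the actual set $N_X(N_2\setminus \Int(N_1),D)$ may also contain the thin collar $\Int(N_1)\cap N(\di N_1,D)$ of possible witnesses on the interior side of $\di N_1$. My plan is to exploit the product structure $N_1\cong S^1\times [0,1]$ to extend the outer DC across this interior collar by projecting collar points radially to $\di N_1$, so that the extended flow agrees with the outer DC on $\di N_1$. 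Verifying non-expansion for pairs $(x,y)$ with $x$ inside and $y$ outside $N_1$ is the delicate point and may require choosing the collar thickness sufficiently small relative to $D_2-D$; once this is in place, Proposition \ref{PropCDC} applies as before.

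Finally, (3) is purely metric: a simplex of $\C_X(\di N_1,D)$ with vertices $v_1\in \alpha_1$ and $v_2\in\alpha_2$ would admit a common witness $w$ with $d(w,v_i)<D$, forcing $d(\alpha_1,\alpha_2)\leq d(v_1,v_2)<2D=2D_1$, in violation of Definition \ref{DefN1C}(3). Hence every simplex is supported in a single component of $\di N_1$, and $\C_X(\di N_1,D)=\C_X(\alpha_1,D)\sqcup \C_X(\alpha_2,D)$, which is precisely the factor $2$ in the separation hypothesis that distinguishes the \v Cech definition from its Rips counterpart.
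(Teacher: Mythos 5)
Your proof of parts (1) and (3) follows the paper's intended route---the paper's own proof is simply ``Follows by definitions, and Propositions \ref{PropDC} and \ref{PropCDC}''---and your unpacking of it (apply Proposition \ref{PropCDC} to the pairs $\alpha\subset N_1$ and $N_1\subset N_2$, adapt Lemma \ref{Lem2021Add1} to the fattened \v Cech domains, and use the doubled separation $2D_1$ of Definition \ref{DefN1C}(3) for the disjoint union) is essentially correct. One small caution: the equality $\C_X(\alpha,D)=\C_\alpha(\alpha,D)$ is more safely derived here from the deformation contraction $N_X(N_1,D)\DC\alpha$ supplied by Definition \ref{DefN1C}(5), which pushes any external witness $w\in N_X(\alpha,D)\subset N_X(N_1,D)$ non-expansively into $\alpha$; the preliminary remark you cite about arbitrary geodesic circles does not hold without such a DC hypothesis (the equator of a round sphere is a geodesic circle whose simplices admit witnesses at the poles that are strictly closer to the whole circle than any point on the circle is).

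For part (2) you have correctly spotted a genuine subtlety that the paper's one-line proof glosses over: the DC furnished by Definition \ref{DefN1C}(5) lives on $N_X(N_2,D)\setminus\Int(N_1)$, whereas Proposition \ref{PropCDC} with $B=N_2\setminus\Int(N_1)$ requires a DC defined on $N_X(N_2\setminus\Int(N_1),D)$, which additionally contains the interior collar $N_X(\di N_1,D)\cap\Int(N_1)$ of potential witnesses. However, your proposed patch is not yet an argument. A radial projection in the $S^1\times[0,1]$ chart of $N_1$ need not be non-expanding in the surface's geodesic metric; the collar width is dictated by $D$ and is not a free parameter you can take small; and you have not shown that the time-parameterisation of the radial flow can be matched to the outer DC across $\di N_1$ so that the monotonicity condition of Definition \ref{DefDC}(2) holds for pairs with one point inside and one outside $N_1$. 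To close the gap cleanly one should either strengthen Definition \ref{DefN1C}(5) to demand $N_X(N_2\setminus\Int(N_1),D_2)\DC\di N_1$ outright, or argue within the proof of Proposition \ref{PropCDC} that a witness lying in $\Int(N_1)$ can always be traded for one in $N_X(N_2,D)\setminus\Int(N_1)$ for the same simplex. As written, your extension across the collar is conjectural rather than proved.
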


\begin{proof}
Follows by definitions, and Propositions \ref{PropDC} and \ref{PropCDC}.
\end{proof}

\begin{Thm}
 \label{ThmMain1C}
 [A version of Theorem \ref{ThmMain1}]
Suppose $X$ is a geodesic surface, $\alpha$ is a \v Cech $DC(D)$ isolated loop for some $D > |\alpha|/4$, and $G$ is a group.  Then 
 $\{H_k(\C_{\alpha}(\alpha,r),G)\}_{r<D}$ is a direct summand of  $\{H_k(\C_X(X,r),G)\}_{r<D}$ via the inclusion-induced map for all $k \geq 2$. 
 \end{Thm}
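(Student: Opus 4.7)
The plan is to follow the proof of Theorem \ref{ThmMain1} verbatim, substituting \v Cech complexes for Rips complexes throughout and invoking Proposition \ref{PropRips1C} in place of Proposition \ref{PropRips1}. I would fix $r < D$ and set up the Mayer--Vietoris decomposition
\[
A = \C_X(N_2, r), \qquad B = \C_X(X \setminus \Int(N_1), r),
\]
with intersection $A \cap B = \C_X(N_2 \setminus \Int(N_1), r)$. Proposition \ref{PropRips1C} then yields $A \simeq \C_\alpha(\alpha, r)$ and $A \cap B \simeq \C_X(\alpha_1, r) \sqcup \C_X(\alpha_2, r)$, which by condition (4) of Definition \ref{DefN1C} is homotopy equivalent to $S^1 \sqcup S^1$. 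In particular $H_j(A \cap B) = 0$ for all $j \geq 2$.

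Once the covering identity $A \cup B = \C_X(X, r)$ is established, I would extract the direct summand from the Mayer--Vietoris long exact sequence
\[
H_k(A \cap B) \to H_k(A) \oplus H_k(B) \to H_k(\C_X(X,r)) \to H_{k-1}(A \cap B).
\]
For $k \geq 3$ both flanking terms vanish, giving an isomorphism $H_k(A) \oplus H_k(B) \cong H_k(\C_X(X,r))$ and exhibiting $H_k(\C_\alpha(\alpha, r))$ as the required inclusion-induced direct summand. For $k = 2$ the \v Cech portion of Theorem \ref{ThmAA} forces $H_2(\C_\alpha(\alpha, r)) = 0$, since the relevant homotopy models are odd-dimensional spheres (or contractible once $r \geq |\alpha|/2$), so the statement is vacuous at this degree. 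The hypothesis $D > |\alpha|/4$ simply guarantees that the range $r < D$ reaches into the regime where $\C_\alpha(\alpha,r)$ actually has non-trivial higher homology, so the conclusion is non-vacuous. Commutativity with the bonding inclusions, and hence compatibility at the persistence module level, is automatic as at the end of the proof of Theorem \ref{ThmMain1}.

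I expect the main obstacle to be verifying $A \cup B = \C_X(X, r)$ throughout the full range $r < D$. A \v Cech simplex at scale $r$ shares a single witness $w$ and so has all vertices in $B(w, r)$, permitting pairwise vertex distance up to $2r$ rather than the tighter $r$ tolerated by a Rips simplex at the same scale. A problematic simplex would contain a vertex in $\Int(N_1)$ together with a vertex outside $N_2$, forcing that vertex pair to lie at distance at least $D_2$. The strengthened requirements in Definition \ref{DefN1C} compared to Definition \ref{DefN1} --- namely the doubled separation $2D_1$ between $\alpha_1$ and $\alpha_2$, and the extension of the deformation contractions in (5) to full $D_2$-neighborhoods --- are engineered precisely to absorb this doubling of the effective \v Cech diameter. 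The covering check then reduces to routine distance bookkeeping, using that any witness $w$ of a simplex touching $\Int(N_1)$ must lie in $N_X(N_1, r) \subset N_X(N_1, D_2) \subset N_2$, which then traps all remaining vertices in a suitable $D_2$-neighborhood of $N_1$. This is the single place where the \v Cech-specific strengthening of the DC isolation hypothesis is essential beyond what Proposition \ref{PropRips1C} already packages for us.
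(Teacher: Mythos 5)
Your proposal takes essentially the same route as the paper, whose entire proof of this theorem reads ``The proof is the same as that of Theorem \ref{ThmMain1}.'' Substituting \v Cech for Rips, Proposition \ref{PropRips1C} for Proposition \ref{PropRips1}, and noting that $H_2(\C_\alpha(\alpha,r),G)=0$ by the \v Cech part of Theorem \ref{ThmAA} so that the $k=2$ case is vacuous, are all as intended, and the compatibility with bonding maps is indeed automatic as in the Rips case.

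However, your claim that the covering $A\cup B=\C_X(X,r)$ ``reduces to routine distance bookkeeping'' absorbed by the strengthened hypotheses deserves a second look. The covering identity relies only on condition (2) of Definition \ref{DefN1C}, namely $N_2\supset N(N_1,D_2)$, and that condition is \emph{unchanged} from Definition \ref{DefN1}; the strengthened items you cite serve other purposes --- the $2D_1$ separation in (3) is what makes $\C_X(\di N_1,r)$ split as $\C_X(\alpha_1,r)\sqcup\C_X(\alpha_2,r)$ in Proposition \ref{PropRips1C}(3), and the enlarged deformation contractions in (5) are needed to apply Proposition \ref{PropCDC}, not to rescue the covering. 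Carrying out the bookkeeping as you describe: a simplex $\sigma$ with a vertex $x\in\Int(N_1)$ and witness $w$ gives $w\in N_X(N_1,r)$, hence $\sigma\subset B(w,r)\subset N_X(N_1,2r)$; concluding $\sigma\subset N_2$ from condition (2) then requires $2r\le D_2$, i.e.\ $r\le D/2$, which is strictly smaller than the claimed range $r<D$ and renders the theorem vacuous for $D\in(|\alpha|/4,\,|\alpha|/2]$. Either a sharper argument than this bookkeeping is needed for the covering identity, or the threshold $D>|\alpha|/4$ and the range $r<D$ in the statement need re-examining; the paper's one-line proof does not address the point you correctly identified as the crux.
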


\begin{proof}
 The proof is the same as that of Theorem \ref{ThmMain1}.
\end{proof}

%%%%%%%%%%%%%%%%%%%%%%
\begin{Def}[A version of Definition \ref{DefN2}]
 \label{DefCN2}
Suppose $0 < a \leq b$, $G$ is a group and $k\in \NN$. Subspace $Z$ of a metric space $X$ is \v Cech $DC(\langle a, b \rangle ; k, G)$ isolated if there exist two closed nested  neighborhoods $N_1 \subset N_2$ of $Z$, so that:
\begin{enumerate}
 \item $N_2 \supset N(N_1, r),  \forall  r\in \langle a, b \rangle$;
 \item  for each $r\in \langle a, b \rangle$,  condition $H_k(\C_X(Z,r),G)\neq 0$ implies that the following maps are trivial:
	\begin{enumerate}
	\item the inclusion induced  $H_k(\C_X(\di N_1,r),G) 
	\to H_k(\C_X( N_1,r),G)$ and $H_k(\C_X(\di N_1,r),G) \to 
	H_k(\C_X( X \setminus \Int(N_1),r),G)$;
	\item the boundary map 
	$$
	H_k(\C_X(X,r),G) \to H_{k+1}(\C_X(N_2 \setminus \Int 	
	(N_1),r))
	$$
	arising from the Mayer-Vietoris long exact sequence for a 	
	decomposition of $\C_X(X,r)$ into $A=\C_X(N_2,r)$ and 
	$B= 
	\C_X(X \setminus \Int (N_1),r)$;
	\end{enumerate}
\item $N_X(N_2, R) \setminus \Int(N_1) \DC \di N_1 $ and $N_X(N_1, R) \DC \alpha$ for some $R$ larger that all elements of $\langle a, b \rangle$.
\end{enumerate}
\end{Def}

\begin{Thm}[A version of Theorem \ref{ThmMain15}]
 \label{ThmCMain15}
Suppose $X$ is a metric space, $G$ is a group, $0<a\leq b$, $k \in \ZZ$, and $Z\subset X$ is \v Cech  $DC(\langle a, b \rangle; G)$ isolated.  Then 
 $\{H_k(\C_Z(Z,r),G)\}_{r\in \langle a, b \rangle}$ is a direct summand of  $\{H_k(\C_X(X,r),G)\}_{r\in \langle a, b \rangle}$ via the inclusion induced map.
 \end{Thm}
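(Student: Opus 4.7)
The proof will follow the template of Theorem \ref{ThmMain15}, substituting Proposition \ref{PropCDC} for Proposition \ref{PropDC} and verifying a \v Cech analogue of Proposition \ref{PropRips15} along the way.

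First, I would establish that for each $r\in\langle a,b\rangle$ there are inclusion-induced homotopy equivalences
$$
\C_Z(Z,r)\simeq \C_X(Z,r)\simeq \C_X(N_1,r)\simeq \C_X(N_2,r)
\quad\text{and}\quad
\C_X(N_2\setminus\Int(N_1),r)\simeq \C_X(\di N_1,r).
$$
This is the \v Cech counterpart of Proposition \ref{PropRips15}. The key observation is that any witness of a simplex in $\C_X(N_i,r)$ or $\C_X(N_2\setminus\Int(N_1),r)$ lies inside $N_X(N_i,r)\subset N_X(N_i,R)$ or $N_X(N_2,R)\setminus\Int(N_1)$ respectively, where $R$ is the buffer from Definition \ref{DefCN2}(3), so one can apply Proposition \ref{PropCDC} using the deformation contractions of condition (3). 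A deformation contraction $N_X(N_2,R)\DC Z$ arises by composition: first push $N_X(N_2,R)\setminus\Int(N_1)$ onto $\di N_1$ and leave $\Int(N_1)$ fixed, then apply $N_X(N_1,R)\DC Z$; this yields $\C_X(Z,r)\simeq \C_X(N_2,r)$. The equality $\C_Z(Z,r)=\C_X(Z,r)$ follows from the remark in Section \ref{SectDefCon} applied to the DC of $N_X(N_1,R)$.

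Next, I plug these identifications into the Mayer-Vietoris long exact sequence for the covering $\C_X(X,r)=A\cup B$ with $A:=\C_X(N_2,r)$, $B:=\C_X(X\setminus\Int(N_1),r)$, and $A\cap B=\C_X(N_2\setminus\Int(N_1),r)$. Around degree $k$ it reads
$$
H_k(A\cap B)\xrightarrow{\mu}H_k(A)\oplus H_k(B)\xrightarrow{f_k}H_k(\C_X(X,r))\xrightarrow{\delta}H_{k-1}(A\cap B).
$$
If $H_k(\C_Z(Z,r),G)=0$ there is nothing to prove, so assume it is nonzero. Condition (2a) of Definition \ref{DefCN2} says that both inclusion-induced maps from $H_k(A\cap B)\cong H_k(\C_X(\di N_1,r),G)$ into $H_k(A)$ and $H_k(B)$ vanish, hence $\mu=0$. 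Condition (2b) says that $\delta$ vanishes on the image of $H_k(\C_X(X,r),G)$ coming from the relevant class. Together they yield a split short exact sequence
$$
0\to H_k(A)\oplus H_k(B)\xrightarrow{f_k}H_k(\C_X(X,r),G)\to 0,
$$
so $f_k$ is an isomorphism. The inclusion $\C_Z(Z,r)\hookrightarrow \C_X(X,r)$ factors through $H_k(A)$ as the first summand projection, exhibiting $H_k(\C_Z(Z,r),G)$ as a direct summand of $H_k(\C_X(X,r),G)$.

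Finally, to promote this to a persistence statement, all three ingredients — the homotopy equivalences above, the Mayer-Vietoris sequence, and the splitting — are natural with respect to the bonding inclusions $r\leq r'$ in $\langle a,b\rangle$, because they are induced by inclusions of subcomplexes. Consequently the level-wise summand identification commutes with the bonding maps, producing the desired direct-summand decomposition of persistences. The main obstacle I anticipate is verifying the \v Cech analogue of Proposition \ref{PropRips15} carefully, particularly the composition of the two deformation contractions in (3) of Definition \ref{DefCN2} needed to obtain $\C_X(Z,r)\simeq \C_X(N_2,r)$; everything downstream of that is a direct transcription of the Rips argument.
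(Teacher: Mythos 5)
Your overall strategy is the right one, and in fact the paper leaves Theorem \ref{ThmCMain15} without an explicit proof precisely because it is meant to be the same Mayer--Vietoris transcription of Theorem \ref{ThmMain1}/\ref{ThmMain15} that you lay out: identify the pieces $A$, $B$, $A\cap B$ via $DC$ hypotheses and Proposition \ref{PropCDC}, then read off triviality of $\mu$ and $\delta$ from conditions (2a) and (2b) of Definition \ref{DefCN2}.

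The genuine gap is exactly the one you anticipate at the end, and it is worth being precise about why it is a gap rather than a routine verification. To obtain $\C_X(Z,r)\simeq\C_X(N_2,r)$ via Proposition \ref{PropCDC}, you need a deformation contraction of a set containing $N_X(N_2,r)$ onto $Z$ (or at least onto $N_1$). You propose to build it by taking the given $DC$ of $N_X(N_2,R)\setminus\Int(N_1)$ onto $\di N_1$, extending it by the identity on $\Int(N_1)$, and composing with $N_X(N_1,R)\DC Z$. Composition of $DC$s is fine, but the extension step is not: condition (2) of Definition \ref{DefDC} must hold for \emph{every} pair $(x,y)$, and for $x\in N_X(N_2,R)\setminus N_1$ together with $y\in\Int(N_1)$ you would need $d(F(x,t'),y)\leq d(F(x,t),y)$ for $t'>t$, i.e.\ that the given retraction moves $x$ monotonically nearer to all interior points of $N_1$. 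On a geodesic space this is exactly the content of Lemma \ref{Lem2021Add1}, whose proof runs a geodesic from $x$ to $y$ and uses the fact that $F$ moves $x$ monotonically toward its crossing point with $\di N_1$. But Definition \ref{DefCN2} is stated for an arbitrary metric space, where no such geodesic argument is available, so the monotone extension does not follow. Note that the Rips analogue, Definition \ref{DefN2}(3), explicitly assumes three deformation contractions, including $N_2\DC N_1$, precisely so that this step does not have to be derived; Definition \ref{DefCN2}(3) lists only two, so the missing hypothesis (something like $N_X(N_2,R)\DC N_1$) appears to be an omission in the paper itself, which your proof neither can nor should be expected to repair. With that hypothesis added, your argument goes through verbatim; without it, the claimed identification $\C_X(Z,r)\simeq\C_X(N_2,r)$ is unsupported.

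Two smaller points worth flagging while you are there. First, for the cover $\C_X(X,r)=A\cup B$ to actually be a union as simplicial complexes, a simplex with a vertex in $\Int(N_1)$ must lie entirely in $N_2$: in the \v Cech setting this forces $N_2\supset N(N_1,2r)$ rather than $N(N_1,r)$, since the other vertices only need to be within $r$ of a common witness that is itself within $r$ of $N_1$ (compare the factor $2D_1$ already appearing in Definition \ref{DefN1C}(3)). Second, the boundary map in Definition \ref{DefCN2}(2b) should land in $H_{k-1}$, not $H_{k+1}$, matching Definition \ref{DefN2}(2b); your Mayer--Vietoris excerpt is written correctly, so just be aware your reading of (2b) silently corrects a typo.
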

%%%%%%%%%%%%%%%%%%%%%%%

\begin{Thm}
 \label{ThmMain2C}
 [A version of Theorem \ref{ThmMain2}]
Suppose $X$ is a geodesic surface, $\alpha$ is a \v Cech $DC(D_1, D_2)$ isolated loop for some $D_1 > |\alpha|/4, D_2 \geq |\alpha|/2$, and $G$ is a group.  Assume $\alpha$ is homologous in $H_1(X,G)$ to a $G$-combination of loops of length at most $|\alpha|$, none of which intersects $N_1$. Then the following hold:
\begin{enumerate}
\item for each $r\in (|\alpha|/4, D]$ there exists a non-trivial $Q_r\in H_2(\C(X,r),G)$ so that:
\begin{enumerate}
\item  for each pair $q_1< q_2$ of parameters from $(|\alpha|/4, D]$ we have $i^G_{q_1,q_2}(Q_{q_1})=Q_{q_2}$;
\item for any $q\in (|a|/4, D]$ there exists no $q_0\leq |\alpha|/4$, for which $Q_q$ is in the image of $i^G_{q_0, q}$;
 \item If  $\alpha$ is homotopic to some shorter geodesic circle $\beta$  in $X$ and $4 q_3$ is larger than the homotopy height between $\alpha$ and $\beta$, then $i^G_{q,q_3}(Q_{q})$ is trivial for any $q\in (|a|/4, D]$.
\end{enumerate}
  
  \item If $G$ is a field and $\{H_2(\C(X,r),G)\}_{r>0}$ is $q$-tame, then the persistence $\{H_2(\C(X,r),G)\}_{r<D}$ contains as a direct summand $G_{(|\alpha|/4, w' /4)}$. 
  %Furthermore, if  $\alpha$ is homotopic to some shorter loop $\beta$  in $X$, then $w'$ is at most the homotopy height between $\alpha$ and $\beta$. 
\end{enumerate}
\end{Thm}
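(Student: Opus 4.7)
The plan is to follow the proof of Theorem \ref{ThmMain2} almost verbatim, swapping every use of Rips tools for their \v Cech counterparts and replacing the ``one third'' thresholds by ``one quarter'' ones. First I would express $[\alpha]_G$ as a singular homology combination $[\alpha]_G=\sum_i g_i[\beta_i]_G+\di\sum_{\tilde j}\tilde h_{\tilde j}[\tilde\Delta_{\tilde j}]_G$, subdivide the $\tilde\Delta_{\tilde j}$ into singular $2$-simplices $\Delta_j$ whose diameter is less than $|\alpha|/4$, and arrange that the induced $(|\alpha|/4)$-samples $L_\alpha$ and $L_i$ of $\alpha$ and $\beta_i$ contain four points in a square-like formation. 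For a fixed $r\in(|\alpha|/4,D_1]$ I would then use Lemma \ref{LemPre1C} to realise each $L_i$ as $\di\sum_p\tau_{i,p}$ inside $\C_{\beta_i}(\beta_i,r)$ and $L_\alpha$ as $\di\sum_l\sigma_l$ inside $\C_\alpha(\alpha,r)$, producing the cycle
\[
C_r=\sum_l\sigma_l-\sum_{i,p}g_i\tau_{i,p}-\sum_j h_j\Delta_j
\]
in $\C_X(X,r)$, and define $Q_r:=[C_r]_G$.

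Non-triviality (1) will be obtained from the Mayer--Vietoris sequence set up exactly as in Theorem \ref{ThmMain1C}, with $A=\C_X(N_2,r)$ and $B=\C_X(X\setminus\Int(N_1),r)$. Splitting $C_r=L_A+L_B$ where $L_A$ contains $\sum_l\sigma_l$ together with those $-h_j\Delta_j$ whose supporting singular simplex meets $N_1$, I would show that $\di L_A$ has winding number $\pm 1$ on $\alpha$, exactly as in the Rips case, using the hypothesis $D_2\ge|\alpha|/2$ to keep all relevant witness simplices inside $N_2$ so that diameter $<|\alpha|/4$ implies contractibility in $\C_X(N_2,r)$. Statement (a) is immediate since the two representatives share their $\Delta_j$-parts and their ``lid'' parts are homologous by the second clause of Lemma \ref{LemPre1C}. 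For statement (b) I would copy the diagram chase with the square-like formations $T_m$ and the observation that for $q_0\le|\alpha|/4$ a chain $K_{q_0}^1$ collecting all tetrahedra of $\C_X(X,q_0)$ having a vertex in $N_1$ is nullhomologous in $\C_X(N_2,q_0)$, hence its image in $H_1(A\cap B)\cong G\oplus G$ has zero total winding, contradicting the $\pm 1$ winding of $Q_q$.

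For (c) the strategy is to realise the homotopy $H\colon I\times I\to X$ between $\alpha$ and $\beta$ as a triangulated cylinder whose cap is filled by the \v Cech nullhomologies of Lemma \ref{LemPre1C}. I would choose an even $M$ and a grid on $I\times I$ so fine that $\diam H(S_{m,n})<\eps/20$ with $\eps=q_3/4-w$; on every other vertical strip I would place a copy of a four-vertex template $T_m$ (the \v Cech analogue of the three-vertex Rips template) and glue the copies using the alternating-diagonal trick. Each $U_m=U_m^{\mathcal L}\cup U_m^{\mathcal R}$ will collapse in $\C_X(X,q_3)$ by contiguity, and each $V_m=T_m^{\mathcal L}\cup T_{m-2}^{\mathcal R}$ will be nullhomologous by Lemma \ref{LemPre1C}, giving the chain identity $L=\sum_{m\text{ even}}\sum_{\sigma\in U_m}H(\sigma)-\sum_{m\text{ even},m\ge 2}\sum_{\sigma\in V_m}H(\sigma)$ and hence $i^G_{q,q_3}(Q_q)=0$. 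Statement (2) then follows from (1) by the standard structure theorem for $q$-tame persistence, with endpoint $w'/4$ determined by the first parameter at which $Q_q$ dies.

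The main obstacle I anticipate is bookkeeping in part (c): I need to ensure that the metric estimate $\diam H([t_i/M,t_{i+1}/M])<q_3/4$ survives when a ``square-like formation'' (four equispaced anchor points) is used in place of three equidistant ones, because the triangulation template $T_m$ is now attached along four arcs instead of three. Since Lemma \ref{LemPre1C} only requires each of the four arcs to have length less than $r$, the same budget $w/4+O(\eps)$ works, but one must verify that the diagonals chosen in $U_m^{\mathcal L}$ and $U_m^{\mathcal R}$ still produce contiguous maps into $\C_X(X,q_3)$. This is where the inequality $4q_3>w$ (rather than $3q_3>w$ in the Rips case) is used crucially, and where $D_2\ge|\alpha|/2$ is needed to guarantee that the witness for each involved simplex can be found in $X$ globally via Proposition \ref{PropCDC}.
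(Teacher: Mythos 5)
Your proposal is correct and follows exactly the same approach the paper takes: the paper's proof of Theorem \ref{ThmMain2C} is literally a one-line deferral to the proof of Theorem \ref{ThmMain2}, ``using Lemma \ref{LemPre1C} and square-like formations of four points on loops instead of Lemma \ref{LemPre1} and three equidistant points.'' You have unpacked this deferral faithfully, with the right substitutions (Rips $\to$ \v Cech, $1/3 \to 1/4$, three equidistant points $\to$ square-like formation, $\eps = q_3/4 - w$), so there is nothing to add.
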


\begin{proof}
 The proof is essentially the same as that of Theorem \ref{ThmMain2}, using Lemma \ref{LemPre1C} and square-like formations of four points on loops instead of Lemma \ref{LemPre1} and three equidistant points. 
\end{proof}

\subsection{Closed filtrations}
\label{Sectsmth}

A technical complication arising in the case of closed filtration is the fact that deformation contractions (and even strict deformation contractions) do not necessarily induce homotopy equivalences on the corresponding closed Rips or \v Cech complexes, as demonstrated by Figure \ref{FigClosedEx}. Nonetheless, we can still detect almost the same footprint with the use of interleaving. 

Open and closed filtrations are $\eps$-interleaved for each $\eps > 0$. Hence by the standard stability results PDs have the same barcodes in both cases, with a possible change in the interval endpoint types (for example, such a change occurs in Theorem \ref{ThmAA} and \cite{ZV}). This means that the following hold in the case when a PD exists (i.e., if $G$ is a field and the persistent homology is q-tame):
\begin{enumerate}
 \item the main footprint detection results of this paper (Theorems \ref{ThmMain1}, \ref{ThmMain15}, \ref{ThmMain1}, Corollary \ref{CorDetect} and their \v Cech versions) hold for closed filtrations  modulo the endpoints, i.e., the endpoints of the detected intervals may change.
 \item closed filtrations may see an emergence or disappearance of  ephemeral summands, i.e. intervals of length $0$, as in Theorem \ref{ThmAA}. 
\end{enumerate}

While detecting ephemeral summands may be complicated in general, the understanding of geometric background allows us to detect  them in the case of geodesic circles as described by Proposition \ref{PropEph}.

\begin{Def}
Let $X$ be a metric space and $A \subset X$ a closed subspace. A map $f\colon X \to A$ is a \textbf{strict contraction} if 
\begin{itemize}
\item  $f(a)=a, \forall a\in A$;
\item $d(x,y) > d(f(x), f(y)), \forall x\in X, y\in X\setminus A$.
\end{itemize}

\end{Def}

\begin{Prop}
 \label{PropEph}
 Suppose $G$ is a group, $X$ is a geodesic space, and $\alpha \subset X$ is a geodesic circle. Suppose there exists a strict contraction $f\colon \cN(\alpha, |\alpha| \frac{l}{2l+1}) \to \alpha$. Then the inclusion $\alpha \to X$ induces an injection 
 $$
H_{2l} ( \bigvee_{|\RR|} S^{2l}, G) \cong H_{2l} \left (\cRips \left (\alpha, |\alpha| \frac{l}{2l+1} \right ), G \right ) \to   H_{2l} \left (\cRips \left (X, |\alpha| \frac{l}{2l+1} \right ), G \right ). $$
\end{Prop}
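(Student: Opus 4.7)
The plan is to combine a simplicial retraction induced by $f$ with a Mayer--Vietoris decomposition, and then exploit the fact that strict contraction strictly decreases simplex diameters, pushing cycles into a Rips scale where the relevant $2l$-homology of $\alpha$ vanishes by Theorem \ref{ThmAA}.

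Write $r = |\alpha|\, l/(2l+1)$. First I would decompose $\cRips(X, r) = A \cup B$ into full subcomplexes, where $A = \cRips(\cN(\alpha, r), r)$ and $B = \cRips(X \setminus \alpha, r)$. Any simplex of $\cRips(X, r)$ containing a vertex of $\alpha$ must have all its other vertices within distance $r$ of that vertex, hence in $\cN(\alpha, r)$, so $A \cup B = \cRips(X, r)$ and $A \cap B = \cRips(\cN(\alpha, r) \setminus \alpha, r)$. The strict contraction $f$ sends the vertex set of each simplex of $A$ to a set with pairwise distances at most $r$, so it induces a simplicial map $\tilde f\colon A \to \cRips(\alpha, r)$ which is the identity on $\cRips(\alpha, r)$; in particular, $\tilde f$ is a simplicial retraction.

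Suppose $[c] \in H_{2l}(\cRips(\alpha, r), G)$ has vanishing image in $H_{2l}(\cRips(X, r), G)$; let $[u]$ denote its image in $H_{2l}(A, G)$. The Mayer--Vietoris sequence for $(A, B)$ produces a class $[w] \in H_{2l}(A \cap B, G)$ whose inclusion-induced image in $H_{2l}(A, G)$ equals $[u]$ (up to a sign depending on the convention). Choose a finite cycle representative $w = \sum_i n_i \sigma_i$; each $\sigma_i$ has all vertices in $\cN(\alpha, r) \setminus \alpha$, so the strict contraction condition gives
\[ \diam\bigl(\{f(v) : v \in V(\sigma_i)\}\bigr) < \diam(V(\sigma_i)) \leq r. \]
Taking the maximum over the finitely many $\sigma_i$ yields some $r' < r$ with $\tilde f_\#(w)$ a cycle in $\cRips(\alpha, r')$.

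Since $r' < |\alpha|\, l/(2l+1)$, Theorem \ref{ThmAA} (applied to $\alpha$ rescaled to unit circumference) gives $H_{2l}(\cRips(\alpha, r'), G) = 0$, so $\tilde f_\#(w)$ bounds in $\cRips(\alpha, r')$ and therefore in $\cRips(\alpha, r)$; hence $\tilde f_*[w] = 0$ in $H_{2l}(\cRips(\alpha, r), G)$. On the other hand, $\tilde f$ restricts to the identity on $\cRips(\alpha, r)$ and commutes with $A \cap B \hookrightarrow A$, so $\tilde f_*[w] = \tilde f_*[u] = [c]$. Therefore $[c] = 0$, establishing injectivity. The main technical point is the decomposition $A \cup B = \cRips(X, r)$ together with the observation that strict contraction only strictly decreases diameters for simplices whose vertices avoid $\alpha$, which is precisely the situation in $A \cap B$; the rest is a routine Mayer--Vietoris chase together with bookkeeping of the chain-level effect of $\tilde f$.
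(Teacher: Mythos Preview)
Your argument is correct and takes a genuinely different route from the paper. The paper works directly with the explicit cross-polytopal generators of $H_{2l}(\cRips(\alpha,r),G)$ supplied by \cite{AA}: each such generator contains the regular $(2l{+}1)$-gon simplex $\beta_t$, and the paper shows $\beta_t$ remains a \emph{maximal} simplex in $\cRips(X,r)$ (using that any extra vertex $z$ would have $f(z)\in\alpha$ at distance strictly less than $r$ from every point of $\beta_t$, which is geometrically impossible). Maximality of $\beta_t$ then forces any cycle with nonzero $\beta_t$-coefficient to survive. Your proof, by contrast, never touches the explicit generators: the retraction $\tilde f$ and the Mayer--Vietoris decomposition $A=\cRips(\cN(\alpha,r),r)$, $B=\cRips(X\setminus\alpha,r)$ reduce the problem to the vanishing of $H_{2l}(\cRips(\alpha,r'),G)$ for $r'<r$, which you read off from Theorem~\ref{ThmAA} without needing anything about the internal structure at the critical scale. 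This makes your argument more portable (it would apply verbatim to any subspace $\alpha$ for which one knows $H_k$ vanishes at strictly smaller closed-Rips scales), whereas the paper's approach yields the sharper geometric statement that the specific simplices $\beta_t$ remain maximal in the ambient complex. One small expository point: your phrase ``$\tilde f$ \ldots\ commutes with $A\cap B\hookrightarrow A$'' just means the obvious equality $\tilde f\circ j_A=\tilde f|_{A\cap B}$, which is what you then use; you might state this more plainly.
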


\begin{proof}
By Theorem \ref{ThmAA}, $\bigvee_{|\RR|} S^{2l}\simeq \cRips(\alpha, |\alpha| \frac{l}{2l+1})$.  
 We parameterize $\alpha$ with complex coordinates by isometrically identifying $\alpha$ with $\{z\in \CC \mid |z|=|\alpha|/(2\pi)\}$. For each $t \in [0,\frac{1}{2l+1})$, let $\beta_t=\{\exp (i(t + \frac{j}{2l+1})) \mid j\in\{0, 1, \ldots, 2l\} \}$ as defined in \cite{AA}.

In \cite[Sections 7 and 8]{AA} the basis of $H_{2l} \left (\cRips \left (\alpha, |\alpha| \frac{l}{2l+1} \right ), G \right )$ is provided in terms of cross-polytopal spheres, generated by collections of points of the form $\beta_0 \cup \beta_t$. An argument in \cite{AA} demonstrating that such a sphere is not nullhomologous in $H_{2l} \left (\cRips \left (\alpha, |\alpha| \frac{l}{2l+1} \right ), G \right )$ is based on an observation that the simplex spanned by $\beta_t$ is a maximal simplex in $\cRips \left (\alpha, |\alpha| \frac{l}{2l+1} \right)$ not appearing in any other such sphere.
In particular, a basic geometric observation shows that if for some point $x\in \alpha$ and for some $t$ condition 
 \begin{equation}
\label{Eq0}
d_\alpha(x,y) \leq |\alpha| \frac{l}{2l+1}, \quad \forall y\in \beta_t 
\end{equation}
holds, then $x\in \beta_t$. This means that no choice of a point $x\in \alpha$ and $t$ satisfies 
\begin{equation}
\label{Eq1}
d_\alpha(x,y) < |\alpha| \frac{l}{2l+1}, \quad \forall y\in \beta_t. 
\end{equation}

This implies that $\beta_t$ is also a maximal simplex in $\cRips \left (X, |\alpha| \frac{l}{2l+1} \right)$: if it was in a boundary of a simplex containing $z\notin \beta_t$, then $z\notin \alpha$ as it satisfies the condition of Equation \ref{Eq0}, and thus $f(z)$ satisfies  Equation \ref{Eq1}, a contradiction. Hence none of the  $\beta_t$ is nullhomologous in $H_{2l} \left (\cRips \left (X, |\alpha| \frac{l}{2l+1} \right ), G \right )$.
\end{proof}

\begin{Prop} [A version of Theorem \ref{ThmMain1} for closed filtrations]
\label{PropDetectC}
 Suppose $X$ is a geodesic space, $G$ is a field, $0 < a < b$, $k\in \NN$, and $\alpha$ is an $SDC( ( a, b ); k, G)$ isolated loop in $X$.  Then each interval of
 $\{H_k(\cRips(\alpha,r),G)\}_{r \in ( a, b )}$ appears in  $\{H_k(\cRips(X,r),G)\}_{r\in ( a, b )}$ via the inclusion-induced map, with the only potential modification being the left endpoints of the non-trivial intervals, which may be added.
  \end{Prop}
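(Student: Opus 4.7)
My plan is to reduce the statement to the open-filtration version (Theorem \ref{ThmMain15}) and then transfer it to the closed setting via the standard open--closed interleaving. Since SDC isolation implies DC isolation in the sense of Definition \ref{DefN2}, Theorem \ref{ThmMain15} applies and yields that $\{H_k(\Rips(\alpha,r),G)\}_{r \in (a,b)}$ is a direct summand of $\{H_k(\Rips(X,r),G)\}_{r \in (a,b)}$ via the inclusion-induced map. Because $G$ is a field and the relevant persistence modules are $q$-tame in this range (for $\alpha$ by Theorem \ref{ThmAA}, for $X$ by the standing convention), the direct summand statement descends to a bar-level matching: every open bar of $\alpha$'s persistence is matched to an equal open bar of $X$'s persistence via the inclusion $\alpha \hookrightarrow X$.

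For every $\eps > 0$ and every metric space $Y$, the natural inclusions $\Rips(Y,r) \hookrightarrow \cRips(Y,r) \hookrightarrow \Rips(Y,r+\eps)$ realize an $\eps$-interleaving between the open and closed Rips filtrations of $Y$. Applied simultaneously to $Y = \alpha$ and $Y = X$, these assemble into a commutative ladder of persistence modules compatible with $\alpha \hookrightarrow X$. By the isometry theorem for $q$-tame modules referenced in Section \ref{Sectsmth}, the barcodes of the open and closed versions of each filtration coincide as multisets of intervals modulo endpoint types. Combining this with the bar matching from the previous paragraph, every bar of $\{H_k(\cRips(\alpha,r),G)\}_{r \in (a,b)}$ is sent by the inclusion-induced map to a bar of $\{H_k(\cRips(X,r),G)\}_{r \in (a,b)}$ that agrees with it modulo possible endpoint modifications.

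It remains to analyze these endpoint modifications and to show that the only admissible one is the addition of a left endpoint. Since $\Rips(Y,r) \subseteq \cRips(Y,r)$ as a subcomplex at every parameter, any class alive in the open filtration at $r$ is already alive in the closed filtration at $r$; equivalently, in passing from an open bar to the corresponding closed bar of the same space $Y$, the left endpoint can only be added (the class can be born strictly earlier only by an infinitesimal amount, which by the $\eps \to 0^+$ limit of the interleaving forces the birth value to remain the same but possibly with the endpoint now included), while the right endpoint can only be removed (the class can die at the same parameter or earlier, in the latter case forcing the closed death value to coincide with the open one and differ only in endpoint type). Commuting this endpoint analysis with the direct summand structure at the open level forces the right endpoints of the matched closed bars of $\alpha$ and $X$ to agree in both value and type (they are pinned by the common underlying open bar), while the left endpoint of the $X$-bar may differ from that of the $\alpha$-bar only by being additionally included.

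The main obstacle is the careful bookkeeping for ephemeral bars, i.e.\ length-zero intervals appearing at boundary parameters such as $r = l|\alpha|/(2l+1)$ in the closed Rips persistence of a geodesic circle (Theorem \ref{ThmAA}); one must verify that such a bar matches an actual (possibly ephemeral) summand in $X$'s closed persistence rather than disappearing under the inclusion. This is precisely the point at which the strictness of SDC (rather than plain DC) is used: the strict deformation contraction prevents the candidate nullhomology of the ephemeral class from being realized inside $N_2$ at the boundary parameter, so the ephemeral summand survives the transfer to $X$.
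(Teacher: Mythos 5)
Your overall strategy of reducing to the open filtration via Theorem \ref{ThmMain15} and then transferring via interleaving is the right outer structure, and your endpoint analysis in the odd-$k$ case is essentially how the paper argues. However, there is a genuine gap in your treatment of the ephemeral bars, and it is not a bookkeeping matter. For even $k=2l$, Theorem \ref{ThmAA} gives $\cRips(\alpha,r)\simeq\bigvee^{|\RR|} S^{2l}$ at the single parameter $r=|\alpha|\frac{l}{2l+1}$ and $H_{2l}=0$ elsewhere in the range, so \emph{every} bar in $\{H_{2l}(\cRips(\alpha,r),G)\}$ has length zero. Your main transfer tool---the $\eps$-interleaving and the isometry/stability theorem---is constitutionally blind to length-zero bars: bottleneck distance~$0$ only controls the non-diagonal part of the diagram, so the statement ``the barcodes of the open and closed versions coincide as multisets of intervals modulo endpoint types'' is simply false for these ephemeral summands. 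Consequently your argument says nothing at all in the even-$k$ case, which is exactly where the Proposition asks you to preserve an infinite family of length-zero intervals.

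You do flag the ephemeral bars as ``the main obstacle'' and you correctly intuit that strictness is the relevant hypothesis, but the assertion that the strict deformation contraction ``prevents the candidate nullhomology of the ephemeral class from being realized inside $N_2$'' is not an argument---and, as stated, it also attacks the wrong target, since a nullhomology of such a class in $\cRips(X,\cdot)$ need not be supported inside $N_2$. The paper's actual proof substitutes a dedicated result (Proposition \ref{PropEph}) for this step: it identifies the generators of $H_{2l}(\cRips(\alpha, |\alpha|\frac{l}{2l+1}))$ with the cross-polytopal spheres $\beta_0\cup\beta_t$ of \cite{AA}, observes that each $\beta_t$ is a maximal simplex of $\cRips(\alpha,\cdot)$, and then uses a single strict contraction $\cN(\alpha,|\alpha|\frac{l}{2l+1})\to\alpha$ to show that no point $z\notin\beta_t$ of $X$ can be adjoined to $\beta_t$ in $\cRips(X,\cdot)$---otherwise $f(z)\in\alpha$ would contradict a sharp equality in the circle. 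That maximality forces the cross-polytopal class to remain non-trivial after inclusion into $X$, independently of any interleaving. Without this (or an equivalent) concrete argument, the even-$k$ half of the Proposition is not established.
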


\begin{proof}
 By Proposition \ref{PropEph}, the statement holds for the ephemeral summands of $H_k(\cRips(\alpha, r),G)$, i.e., for the case when $k$ is even. If $k$ is odd, then by Theorem \ref{ThmAA}, $\{H_k(\Rips(\alpha,r),G)\}_{r\in ( a, b )}$ contains precisely one interval, namely 
 $$
 I=\left (\frac{k-1}{2k}, \frac{k+1}{2(k+2)}\right) \cap ( a, b ). 
 $$
Since $I$ is an open interval we can use the interleaving argument above for each value of $r\in(a,b)$, to conclude that the inclusion $\alpha \to X$ induces the same interval in $\{H_k(\cRips(X,r),G)\}_{r\in \langle a, b \rangle}$, with the possible exception of changed endpoints.  
 The right endpoint of $I$ can't be added since the triviality of a homology element in $\Rips(X,r)$, which is born before $r$, implies its triviality in $\cRips(X,r)$.
 \end{proof}

We conjecture that the left endpoints in Proposition \ref{PropDetectC} don't get added either. 

At the first sight, detecting the ephemeral summands as in Proposition \ref{PropEph} seems to be  a theoretical curiosity. However, results of \cite{AA} suggest that when approximating persistence of a geodesic circle by that of a finite subset, the ephemeral summands tend to prolong themselves to non-trivial intervals (see (5) in Section \ref{SectSample}). We plan to explore this phenomenon in geodesic spaces in future work. On the other hand, we do not expect our inability to detect the nature of the left endpoints in Proposition \ref{PropDetectC} to have a significant effect on the subsequent computations.

%%%%%%%%%%%%%%%%%%%%%%%%%%%%%%
\section{An example of interpretation}
\label{SectSample}

In this section we present an interpretation of a computational example. From a two-dimensional sphere of radius $1$ remove an open $2$-dimensional ball of radius $1/4$ to obtain a contractible space $X$, which we equip with the geodesic metric (see the left side of Figure \ref{FigEnd}). Alternatively, $X$ can be thought of as a closed portion of a sphere of radius $1$ below the upper parallel $\alpha$ of radius $1/4$. Note that $\alpha$ is a geodesic circle in $X$ of length $\pi/2$. We select a subset $S\subset X$ consisting of $4000$ points uniformly at random, approximate the geodesic metric on $S$, and use Ripser \cite{UB} to compute the PD of the Rips filtration of a random subsample of $S$ consisting of $400$ points. The PD up to dimension $3$  was  computed by Matija \v Cufar as a part of his master's thesis \cite{Cufar} and is shown on Figure \ref{FigEnd}. It can be interpreted as follows: 

\begin{figure}[htbp]
\begin{center}
\begin{tikzpicture}[scale=.4]
%%%%%%%%%%%%%
    \begin{scope}
        \clip (130: 2.5) arc (180:-180: 1.6 and .3);
                \shade[ball color=blue!30!gray!20,shading angle=180] (0,0) circle (2.5);
    \end{scope}
    
    \begin{scope}
    \clip (130: 2.5) arc (-180:0: 1.6 and .3)
    arc (50:-230:2.5);
    %\clip (130:2.5) arc (130:410:2.5) 
    %to[out=200,in=-30] (130:2.5);
       % \clip (0:2.45) arc (0:90:2.45) to[out=225,in=100,looseness=1.2] (-1.1,-1.1) to[out=-10,in=225,looseness=1.2] (0:2.45) -- (3,0) -- (3,-3) -- (-3,-3) -- (-3,3) -- (3,3) -- (3,0);
        \shade[ball color=blue!70!gray,opacity=0.90] (0,0) circle (2.5);
    \end{scope}
   %\draw[red, thick] (130:2.5)  to[out=30,in=-200] (50:2.5) 
    to[out=200,in=-30] (130:2.5);
    \draw[red,very  thick] (130: 2.5) arc (180:-180: 1.6 and .3);

%%%%%%%%%%%%%
\end{tikzpicture}
\includegraphics[scale=.4]{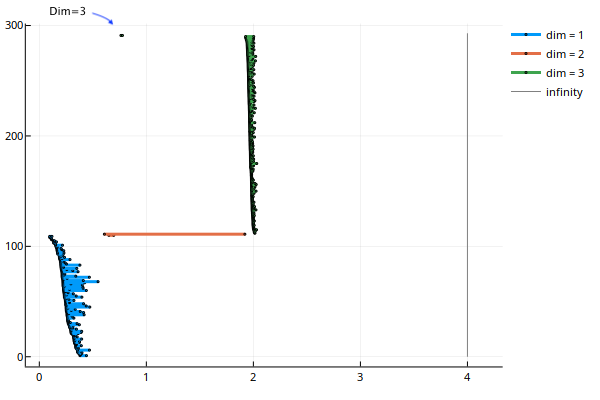}
\caption{PD described in Section \ref{SectSample}.}
\label{FigEnd}
\end{center}
\end{figure}

\begin{enumerate}
\item  Since $X$ is contractible, the PD should have no bars for small $r$ by \cite{Haus}. However, since we start with a discrete set, we get certain $1$-dimensional bars, which are limited to small values of $r$. See \cite{ZV1} for specific bounds on the initial $1$-dimensional bars with respect to the density of $S$. 

\item At approximately $|\alpha|/3=\pi/6$ two bars are born: 
\begin{itemize}
\item A short $3$-dimensional bar  by Theorem \ref{ThmMain1} (see the topmost feature of Figure \ref{FigEnd}), which dies at about $2|\alpha|/5=\pi/5$. The loop $\alpha$ can be located by connecting the vertices of the homology class generating this bar.
\item A long $2$-dimensional bar by Theorem \ref{ThmMain2}, which  dies at about $2 \pi/3$, which is the length of the equator divided by $3$ and thus equals the nullhomotopy height of $\alpha$.
\end{itemize}
\item Note that the long $2$-dimensional bar above is born slightly earlier than the $3$-dimensional bar. This is always the case, as generating the two-dimensional bar only requires a $2$-dimensional portion of the generator of the $3$-dimensional bar, that  spans the sample of $\alpha$.
\item A pairing of a $3$-dimensional bar with $2$-dimensional bar indicates that $\alpha$ is contractible in $X$. 
\item We speculate the other short $3$-dimensional bars are induced by other geodesic circles (i.e., equator and its rotations) in $X$. We will delve deeper into them in our future work.
\end{enumerate}

Note that, except for small values of $r$, there is essentially no noise in the PD. We are able to interpret almost all of the bars. Initial $1$-dimensional bars are unavoidable as we always start with a finite sample (discrete subset). They shorten as the density of our sample increases. The only other unmentioned bar is the short $2$-dimensional bar appearing at about the same time as the long $2$-dimensional bar. It can be explained by the effect of discretisation and the structure of the $3$-dimensional bar born at about the same time. 

During our experimentation we have generated several instances of the PD using the mentioned procedure. The obtained diagrams are qualitatively the same in all instances (and aligned with the interpretation above) with the only exception being the short isolated $3$-dimensional bar, which did not appear in all attempts due to its short length.

\end{document}